\newtheorem{theorem}{Theorem}[section]
\newtheorem{corollary}[theorem]{Corollary}
\newtheorem{lemma}[theorem]{Lemma}
\newtheorem{proposition}[theorem]{Proposition}
\par\noindent{\bf Proposition \ref{res:hiper}.}\!\!
\par\noindent{\bf Theorem \ref{result43}.}\!\!
\par\noindent{\it Sketch of the proof}.  
\hfill\linebreak[2]\hspace*{\fill}$\circlearrowleft$}
\par\noindent{\it Proof of Proposition }\ref{prop:stab:smc}.  
\hfill\linebreak[2]\hspace*{\fill}$\circlearrowleft$}
\par\noindent{\it Proof of Propositions }\ref{adap:mon}{\it and }\ref{simult:adap}.\!\!\!
\hfill\linebreak[2]\hspace*{\fill}$\circlearrowleft$}
\theoremstyle{definition}
       \newtheorem{definition}[theorem]{Definition}
       \newtheorem{remark}[theorem]{Remark}
       \newtheorem{example}[theorem]{Example}
       \newtheorem{parrafo}[theorem]{{\!}}  }
\numberwithin{equation}{theorem}
\newcommand{\calo}{{\mathcal {O}}}
\DeclareMathOperator{\Max}{\underline{Max}}
\DeclareMathOperator{\mult}{mult}
\DeclareMathOperator{\ord}{ord}
\DeclareMathOperator{\Sing}{Sing}
\DeclareMathOperator{\Spec}{Spec}
\DeclareMathOperator{\red}{red}
\newcommand{\G}{{\mathcal G}}
\newcommand{\p}{{\mathfrak p}}
\newcommand{\Z}{{\mathbb Z}}
\newcommand{\N}{{\mathbb N}}
\definecolor{darkpurple}{rgb}{0.28,0.24,0.55}
\definecolor{lightblue}{rgb}{0,0.75,1}
\title{Equimultiplicity, algebraic elimination, and blowing-up.}
\author{Orlando E. Villamayor U.}
\thanks{2000 {\em Mathematics subject classification. 14E15.}}
\thanks{The author is partially supported by MTM2009-07291.}
\date{June 2010}
\address{Dpto. Matem\'aticas,  Universidad
Aut\'onoma de Madrid and Instituto de Ciencias Matem\'aticas CSIC-UAM-UC3M-UCM \\
Ciudad Universitaria de Cantoblanco, 28049 Madrid, Spain}
\email[Orlando E. Villamayor U.]{villamayor@uam.es}
\keywords{Multiplicity, integral closure, Rees algebras, elimination, perfect fields.}
\begin{document}
\begin{abstract}
Given a variety $X$ over a perfect field, we study the partition defined on $X$ by the multiplicity (into equimultiple points), and the effect of blowing up at smooth equimultiple centers. Over fields of characteristic zero we prove resolution of singularities by using the multiplicity as an invariant, instead of the Hilbert Samuel function. 

%

\end{abstract}
\maketitle

{\tableofcontents}

%
%
%
%
%
%
%
%
\color{black}

\section{Introduction}

Here $X$ will be a scheme of finite type over a perfect field $k$, we shall study properties of the multiplicity along points of $X$ using tools of commutative algebra, such as integral closure of ideals.  We also assume, throughout this paper, that $X$ is equidimensional.
The multiplicity defines a function, say
\begin{equation}\label{lmul} \mult_X: X \to \mathbb{N},
\end{equation}
where the domain is the underlying topological space of $X$, and given $\xi \in X$, $\mult_X(\xi)$ is the multiplicity of the local ring $\calo_{X,\xi}$.  This function is  upper semi-continuous, as we shall indicate below, so the level sets are locally closed.  Here  $\xi$ is said to be an n-fold point when $\mult_X(\xi)=n$. Let $$F_n(X)$$ denote the set of points of $X$ of multiplicity $n$, or say the set of $n$-fold points. We prove that there is a {\em local presentation} at the point. Namely, locally at $\xi$, in the sense of \'etale topology, there is an embedding in a smooth scheme, say $X\subset W$, and hypersurafces
\begin{equation}\label{intro1}
\mathcal{H}_1, \dots , \mathcal{H}_r 
\end{equation}
and integers $ n_1, \dots , n_r$, where each $\mathcal{H}_i$ has multiplicity $n_i$ at $\xi$, and 
\begin{equation}\label{intro2} F_n(X)= \bigcap_{1\leq i \leq r} \Sing(\mathcal{H}_i, n_i)
\end{equation}
where $\Sing(\mathcal{H}_i, n_i)\subset W$ is the set of points where the hypersurface $\mathcal{H}_i$ has multiplicity $n_i$. The requirement on this local presentations is that such expression is preserved for the set of all infinitely near singularities having the same multiplicity $n$, namely: If $Y\subset 
F_n(X)= \bigcap_{1\leq i \leq r} \Sing(\mathcal{H}_i, n_i)$ is a regular center, and if $X\leftarrow X_1$ and $W\leftarrow W_1$ denote the blow ups at $Y$, then 
\begin{equation}\label{intro3}  F_n(X_1)= \bigcap_{1\leq i \leq r} \Sing(\mathcal{H}^{(1)}_i, n_i)\subset W_1
\end{equation}
where $\mathcal{H}^{(1)}_i$ is the strict transform of $\mathcal{H}_i$.

Moreover, we require this expression to hold after any sequence of blow-ups on smooth centers included in the set of $n$-fold points. 

This parallels Hironaka's notion of idealistic space (\cite{Hironaka77}) , which is a local presentation that he attaches to points with a given Hilbert-Samuel function, whereas here we assign a local presentation to the multiplicity.

The interest of local presentations is that they simplify the study of the behavior of these invariants when blowing up at regular center. In fact, they render a {\em reduction to the hypersurface case} as they allow us to replace $X$ by a finite number of hypersurfaces, and the law of transformation of a hypersurface is easy to handle. 

 In (\cite{Hironaka77}) Hironaka makes use of techniques of division at henselian rings to obtain the hypersurfaces of the local presentation corresponding to the Hilbert Samuel function (a result of Aroca). Here we use algebraic techniques, which are very close to those of algebraic elimination,
to construct the local presentation corresponding to the multiplicity.
%

Let us recall the notion of Hilbert-Samuel function mentioned above. There is firstly a Hilbert function attached to a  point of a noetherian scheme $\xi \in X$, say 
$HS_X(\xi)$, that maps $\mathbb{N} $ to $\mathbb{N}$. So the graph is in $\Lambda=\mathbb{N}^\mathbb{N}$ which we consider now with the lexicographic ordering. The Hilbert Samuel function $HS_X:X\to \Lambda$ is constructed by setting $HS_X(\xi)$ as before when $\xi \in X$  is a closed point, and with a prescribed modification on non-closed points.

Bennett proved that if $X$ is an excellent scheme, the Hilbert Samuel function $HS_X$ is upper semi-continuos.
Therefore the level sets of the function stratify the excellent scheme $X$ into locally closed sets. 
A second fundamental result, also due to Bennett, says that if $\alpha: X_1 \to X$ is the blow up at a closed and regular center included in a stratum, then $HS_{X_1}(y) \leq HS_{X}(\alpha(y))$ at any point $y\in X_1$ (see \cite{BB}).

This latter result was further simplified by Balwant Singh in 
\cite{BS}. We refer here to \cite{CJS}, where the semicontinuity of
$HS_X$ is studied in the first chapter (Theorem 1.34), and the behavior under blow ups is carefully treated in the second chapter.

This study of the Hilbert Samuel stratification took place in the early 70's, and led to the extension of Hironaka's theorem to the class of excellent schemes over a field of characteristic zero. 

The expression of the multiplicity of a local ring in terms of the Hilbert Samuel polynomial is due to Samuel (see \cite{Samuel}), and it follows from this that if $X$ is an excellent pure dimensional scheme (with the same dimension locally at any closed point), the stratification defined by $HS_X$ is a refinement of that of $\mult_X$. In particular, in this case $\mult_X$ is semi-continuous as a result of Bennett's work.
However, the concept of multiplicity is older then that of Hilbert functions, and the semi-continuity of $\mult_X$ was already studied in earlier years.

 A first step in this direction is a theorem of Nagata that states that if $p$ is a prime ideal in a local ring $R$, then the inequality $e(R)\geq e(R_p)$ holds if $p$ is analytically unramified (i.e., if the completion of the local ring $R/p$ is reduced), and 
 $\dim R=\dim R_p +\dim R/p$ (see \cite{Nagata1}, or Theorem 40.1 in \cite{Nagata}). 
If $X$ is an excellent scheme, with the additional property that all saturated chains of irreducible subschemes $Y_0 \subset Y_1 \dots \subset Y_d$ have the same length, then Nagata's criterion applies, hence $\mult_X(x)\geq \mult(y)$ when $x\in \overline{y}$ in $X$.
This condition is not sufficient to prove the semi-continuity, and 
a second step, due to Dade (\cite{D}), settles this property. He uses there ideas and invariants introduced by Northcott and Rees,
that we shall mention later.

Dade also proves that if $\alpha: X_1 \to X$ is the blow up at a closed and regular center included in a stratum (in a level set) of $\mult_X$, then $\mult_{X_1}(y) \leq \mult_{X}(\alpha(y))$ at any point $y\in X_1$.
 This latter result, was simplified and generalized by Orbanz \cite{Orbanz}).
 
%
%
%
%


In this paper we will consider equidimensional schemes 
of finite type over a perfect field.  In \ref{lem51c} we will prove these previous results in this restricted setting, without using the results of Bennett or Dade. We discuss their results in \ref{XXX1}, which hold for more general excellent  schemes.
%
\color{black}

The main objective in this paper is the discussion of local presentation. Let us recall here that these lead to resolution of singularities over fields of characteristic zero. 

\begin{theorem}(Hironaka)\label{Res}  Given a variety $X$ over a field of  characteristic zero, there is a sequence 
\begin{equation}\label{nccm1}X \longleftarrow X_1\longleftarrow \dots \longleftarrow X_s.
\end{equation}
of blow ups at regular centers so that:
\begin{itemize}
\item $X_s$ is regular.
\item The composition $X\leftarrow X_r$ induces an isomorphism on $X\setminus \Sing X$;
\item  The   exceptional divisor of $X\leftarrow X_r$ has normal crossing support.  

\end{itemize}
\end{theorem}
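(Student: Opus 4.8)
The plan is to prove Theorem \ref{Res} by induction on the maximal value $n = \max_{\xi \in X}\mult_X(\xi)$, using the local presentations to reduce the problem, at each stage, to the resolution of a Rees algebra on a smooth ambient scheme --- an object for which the characteristic-zero machinery (maximal contact, differential saturation, induction on the dimension) applies. Since $X$ is a variety (reduced and equidimensional) over a perfect field, a point $\xi$ satisfies $\mult_X(\xi)=1$ exactly when $\calo_{X,\xi}$ is regular; hence it suffices to produce a sequence \eqref{nccm1} of blow-ups at regular centers, each contained in the closed set $\{\mult_X \ge n\}$ and each transversal to the accumulated exceptional divisor, after which the maximal multiplicity drops strictly. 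Iterating finitely many times yields $X_s$ with $\mult_{X_s}\equiv 1$, i.e. $X_s$ regular; and since a regular point has multiplicity $1<n$, every center lies in $\Sing X$, which gives the claimed isomorphism over $X\setminus\Sing X$, while the transversality bookkeeping gives the normal-crossings condition on the exceptional divisor.

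Next comes the reduction step. Fix the top multiplicity $n$ and consider the closed set $F_n(X)=\{\mult_X=n\}$. Working \'etale-locally, embed $X\subset W$ in a smooth scheme and produce, as in \eqref{intro1}--\eqref{intro2}, hypersurfaces $\mathcal{H}_1,\dots,\mathcal{H}_r$ with $F_n(X)=\bigcap_i\Sing(\mathcal{H}_i, n_i)$. Encoding each $\mathcal{H}_i$ by the Rees algebra generated in degree $n_i$ by its defining equation, and intersecting these algebras, one obtains a Rees algebra $\mathcal{G}\subset\mathcal{O}_W[t]$ whose singular locus is precisely $F_n(X)$ and --- this is the essential property built into the notion of local presentation, namely \eqref{intro3} and the sentence following it --- whose formation commutes with blow-ups at regular centers contained in $F_n(X)$, the transform of $\mathcal{G}$ controlling $F_n(X_1)=\bigcap_i\Sing(\mathcal{H}^{(1)}_i, n_i)$ via strict transforms. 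The construction of the $\mathcal{H}_i$, which is where algebraic elimination enters, is the core of the paper, and here I would take it as given.

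Then one runs the canonical characteristic-zero resolution algorithm for $\mathcal{G}$. By induction on $\dim W$, after passing to the differential saturation of $\mathcal{G}$ one finds, \'etale-locally, a smooth hypersurface of maximal contact; the restriction of $\mathcal{G}$ to it (its coefficient algebra) lives on a smooth scheme of one dimension less, an elimination algebra carries this passage across the \'etale charts, and one obtains an upper-semicontinuous resolution invariant whose maximal value strictly decreases under the prescribed sequence of blow-ups until $\Sing\mathcal{G}=\emptyset$. Because each center of that sequence is contained in $\Sing\mathcal{G}=F_n(X)$, the compatibility of the previous paragraph shows the same blow-ups performed on $X$ keep $\mult_X\le n$ everywhere --- here using the semicontinuity of $\mult_X$ and its non-increase under blow-ups at regular centers in an equimultiple stratum, which in the present setting are proved directly in \ref{lem51c}, so that the theorems of Dade and Orbanz are not needed --- and, once $\Sing\mathcal{G}=\emptyset$, force $F_n(X_m)=\emptyset$, i.e. $\max\mult_{X_m}<n$. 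This completes the inductive step; the exceptional-divisor control in the algorithm propagates the normal-crossings condition through the induction on $n$.

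The main obstacle, beyond the construction of the local presentation itself, is that this whole scheme depends essentially on the characteristic being zero, in two places: the existence of hypersurfaces of maximal contact for the Rees algebra $\mathcal{G}$ (which fails in positive characteristic), and the fact that a differentially saturated Rees algebra is ``as simple as possible'', which is what makes the inductive invariant behave well under blow-up. A secondary technical point is globalisation: the local presentations, the choice of maximal contact, and the elimination algebra are only defined \'etale-locally, so one must verify that the resolution invariant and the selected centers are independent of the choices and patch to a canonical (equivariant) regular closed subscheme of $X$; this is the standard but nontrivial functoriality argument, and it is what ultimately licenses the inductive loop on $n$.
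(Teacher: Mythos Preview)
Your reduction of the maximum multiplicity is essentially the paper's argument: attach, \'etale-locally, a Rees algebra $\mathcal G$ on a smooth $W$ whose singular locus is $F_n(X)$ and whose transforms track $F_n(X_i)$ (this is \ref{blpm}), and then invoke constructive resolution of basic objects in characteristic zero to force $\Sing\mathcal G=\emptyset$, hence $\max\mult_{X_m}<n$. The paper packages this as Theorem~\ref{estratificar_X} applied to $F=\mult$, and the globalisation/equivariance issue you flag is exactly what that theorem (and Hironaka's trick, Theorem~\ref{Th416}) are there to settle.

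Where you diverge from the paper is condition (3), the normal-crossings support of the exceptional divisor, and here your argument has a real gap. You write that ``the exceptional-divisor control in the algorithm propagates the normal-crossings condition through the induction on $n$''. But each time the maximum multiplicity drops from $n$ to $n'<n$, the local presentation is rebuilt: a \emph{new} \'etale neighbourhood, a new embedding $X_j\subset W'$, a new $\mathcal G'$, and in the paper's set-up the basic object one resolves is $(W',\mathcal G',E'=\emptyset)$ (see Remark~\ref{lo_q}). The exceptional hypersurfaces produced in the $n$-phase live in the old $W$'s, not in $W'$, and their traces on $X_j$ are divisors on a \emph{singular} scheme, where ``normal crossings'' is not even the right notion yet. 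So there is no mechanism in your outline that feeds the accumulated exceptional locus into the next round of the induction. The paper does not attempt this: its proof of Theorem~\ref{resolucion_alg_varieties_1} is two-phase. First, iterate Theorem~\ref{estratificar_X} until $X_s$ is smooth (this gives (1) and (2)). Then set $\mathcal K=\mathcal I(Y_0)\cdots\mathcal I(Y_{s-1})\calo_{X_s}$, a locally invertible ideal on the smooth $X_s$ supported exactly on the exceptional locus, and run the constructive resolution of the basic object $(X_s,\mathcal G_{\mathcal K},E=\emptyset)$, i.e.\ a log-principalization (Remark~\ref{541}). This second phase is what actually secures (3). Your single-phase claim would need, at minimum, an explanation of how the old exceptional divisors are carried into the new local presentation at each drop of $n$; absent that, the normal-crossings conclusion is unsupported.
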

This is an existential theorem, whereas we will address here resolution from a constructive point of view, by fixing an algorithm of resolution of singularities. 

Recall that, in general, there are different resolutions for a given singular variety $X$. However, if we fix an algorithm one can require some natural properties: 
\begin{itemize}
\item[i)] It assigns to each $X$ a unique resolution of singularities.
\item[ii)]  If $X\leftarrow X'$ is smooth, the constructive resolution of $X'$ is the pull-back of that of $X$
\item[iii)] (Equivariance) If a group acts on $X$, then the group action can be lifted to the constructive resolution of $X$.
\end{itemize}
These properties are known to hold for constructive resolution in
which local presentations are defined in terms of the Hilbert-Samuel function (\cite{Villa92}). The same argument applies here, where 
local presentations will be attached to the multiplicity. The guideline in both cases is the notion of equivalence introduced by Hironaka, and used in \cite{Hironaka77}, where he indicates that the equivariance follows from a more general form of {\em compatibility with isomorphisms}:
\begin{itemize}
\item[iv)] Let $\Theta: X \to Y$ be an isomorphism of schemes. Assume that $X$ and $Y$ have structures of varieties over fields $k$ and $k'$ respectively, both of characteristic zero. Then the resolution of singularities assigned by the algorithm to $X$ is the pull-back, via 
the isomorphism, of that assigned to $Y$.
\end{itemize}
In this paper we construct the sequence (\ref{nccm1}) by blowing up equimultiple centers, with no reference to normal flatness. This answers Question D in \cite{Hironaka64}, p. 134, and renders a curious compatibility:

A variety $X$ over a field $k$ of  characteristic zero is in particular an equidimensional scheme of finite type over $k$. Let $Y$ be an equidimensional scheme of finite over $k$, and let $(Y)_{red}$ denote the reduced scheme. Our procedure introduces a new natural property, of compatibility with reductions. It will assign to each $Y$ a unique sequence 
$Y \longleftarrow Y_1\longleftarrow \dots \longleftarrow Y_s$.
In addition, if we set $X_i=(Y_i)_{red}$ we recover the constructive resolution of  $X=(Y)_{red}$.

The connection between the blow up at equimultiple centers, and at centers with the same Hilbert-Samuel function, has been studied in several works, particularly by Hironaka and by Schickhoff. We refer here to \cite{Herrmann} , or to section 5 in  \cite{Lipman1}, for a detailed report on that progress.

\vskip 0,5cm

 The previous discussion indicates that local presentation are relevant for constructive resolution, a point to be addressed in the last Section 8. 

 Note here that Theorem \ref{Res} is formulated in the class of schemes of finite type over a field $k$, and we want to carry out this discussion within this class. In fact we show that local presentations can be attached to the multiplicity within the class of schemes of finite type over a perfect field. We now sketch the general strategy and 
indicate why we need $k$ to be perfect.
 
Lipman studies the multiplicity for complex analytic varieties, and also in the algebraic setting. Let $Z$ be a complex analytic variety, and let $x\in Z$ be a point where the variety has multiplicity $n$ and local dimension $d$, then one can construct, at a suitable neighborhood, a finite morphism $\delta: Z\to (\mathbb{C}^d,0)$, with $n$ points in the general fiber. Moreover, $n$ is the smallest integer with this property. So
$Z$ can be expressed, locally, as a finite and ramified cover of $\mathbb{C}^d$. This is the starting point for the study of equimultiplicity in Proposition 4.1 of \cite{Lipman1}.

In this paper $X$ is an equidimensional scheme of finite type over a perfect field $k$. Fix a closed point $\xi \in X$ of multiplicity $n$. Under these hypothesis, and after restricting $X$ to an \'etale neighborhood of the point, one can construct a finite morphism $\delta: X\to V$, where $V$ is smooth over $k$, and there are $n$ points in the general fiber. To achieve this we first note that, as the base field $k$ is perfect and  $\xi \in X$ is closed, we may assume first that the point is rational. In fact this requires a change of the base field which is an \'etale morphism. One can then repeat the previous construction (in the analytic realm), and produce a finite morphism at the henselization of $\calo_{X,\xi}$. Finally, one can descend this result to a suitable \'etale neighborhood of $\xi \in X$ (see e.g., \cite{BrV1}, Appendix A)). The fact that $k$ is perfect and that $X$ is a scheme of finite type is crucial for this construction.
A finite morphism with this property is said to be {\em transversal} at the point $\xi$.

\color{black}
%
%
%

%
%

 {\bf 1)}  A fundamental property of a transversal morphism concerns the way it maps the set of $n$-fold points $F_n(X)$ into its image, say $\delta(F_n(X))$. The main features are:

{\bf 1,a)} The morphism $\delta: X \to V$, when restricted to $F_n(X)$, induces a homeomorphism 
\begin{equation}
F_n(X) \equiv \delta(F_n(X))
\end{equation}

 {\bf 1,b)} An irreducible  subscheme $Y\subset F_n(X)$ is smooth if and only if $\delta(Y)(\subset \delta(F_n(X)))$ is smooth.

{\bf 2)} The blow up of $X$ at a smooth center $Y\subset F_n(X)$ induces a commutative diagram 

\begin{equation}\label{di5w} \xymatrix{
 X\ar[d]_{\delta}& & X_1\ar[d]_{\delta_1} \ar[ll]\\
 V&   & \ar[ll]  V_1\\
 } 
\end{equation}
where the horizontal maps are the blow ups at $Y$ and $\delta(Y)$ respectively, and $\delta_1: X_1\to V_1$ is a finite morphism with $n$ points in the general fiber. We will show that points in $X_1$ have at most multiplicity $n$, so  if $F_n(X_1)\neq \emptyset$, then $\delta_1$ is transversal at any $n$-fold point.

The properties { 1,a)} and { 1,b)} indicate that the set of $n$-fold points in $X$ are evenly spread over their image by a transversal morphism, whereas { 2)} is a property of {\em stability} of the transversality. These two aspects will be justified in this paper, together with other relevant features of the multiplicity:

%

%
%
{\bf 3)}  The finite transversal morphisms are constructed locally in \'etale topology, and we may assume that $\delta: X \to V$ is affine. Namely, that $V=Spec(S)$, $X=Spec(B)$, where 
$S\subset B$ is a finite extension of algebras of finite type over the perfect $k$, and that $S$ is smooth over $k$ and irreducible. Set 
$B=S[\Theta_1, \dots, \Theta_r]$, which we view as a quotient of a polynomial ring $S[X_1, \dots, X_r]$, and hence there is a commutative diagram 
\begin{equation}\label{eccp}
\xymatrix{
  X=\Spec(B) \ar[d]_{\delta} &\subset  W=\Spec(S[X_1, \dots ,X_r])  \ar[dl]_{\delta'}& \\ 
   V=\Spec(S)        &  &       
}
\end{equation}
where $\delta': W \to V$ is a smooth morphism of smooth schemes. For each index $i=1,\dots, r$ we will specify an equation  of integral dependence $$(\Theta_i)^{n_i}+a_{i,1} (\Theta_i)^{n_i-1}+ \dots +a_{i,n_i}=0,$$ 
and hence a polynomial, say
\begin{equation}\label{18}f_{\Theta_i}(X)=X^{n_i}+a_{i,1} X^{n_i-1}+ \dots +a_{i,n_i}\in S[X].
\end{equation}
Set $f_{\Theta_i}(X_i)\in S[X_1, \dots, X_r]$,  $i=1,\dots, r$. So $f_{\Theta_i}(X_i)$ maps to zero in $B$ via the surjective morphism of $S$ algebras $S[X_1, \dots, X_r] \to B$, mapping $X_j$ to $\Theta_j$, $j=1, \dots, r$.

We prove that a local presentation for the multiplicity is obtained from these polynomials define, setting 
$$\mathcal{H}_i=\{f_{\Theta_i}(X_i)=0 \} \subset W,$$ the hypersurface in $W$ defined by $f_{\Theta_i}(X_i)$ for $i=1, \dots, r$. Namely, we prove that

\begin{equation}\label{tacion}F_n(X)= \bigcap_{1\leq i \leq r} \Sing(\mathcal{H}_i, n_i)
\end{equation}
where  $\Sing(\mathcal{H}_i, n_i)$ denotes the points where the hypersurface $\mathcal{H}_i$ has multiplicity $n_i=\text{ deg} (f_{\Theta_i}(X_i))$; that the same holds after blowing up at a smooth  center $Y$ included in the $n$-fold points (see \ref{intro3}), and after applying  any sequence of blow ups with centers included in the $n$-fold points.

We use techniques of elimination, applied here for a finite extension $S\subset B$ over a smooth algebra $S$. This form of elimination was introduced as a tool to study singularities in positive characteristic. 

In Section 2 we present the Rees algebras. These will be used to reformulate the notion of local presentations.
Some aspects of this form of elimination treated here will be discussed in Section 3,.

Section 4 is devoted to some general results about the multiplicity, blow ups, and integral closure of ideals. The first steps are given here on the study of the behavior of the multiplicity on $B$, once we fix a finite extensions $S\subset B$ as above.
Although our ultimate goal is to study the case in which both rings are finite type algebras over a perfect field, we also consider inclusions $S\subset B$ where the rings involved are more general. \color{black}

The property of transversality of finite morphisms, stated in {\bf 1a)}, is discussed in Section 4. Sections 5 and 6 are devoted to the properties {\bf 1)}, {\bf 2)}, and {\bf 3)}, these are the two main sections, both developed essentially in the local (affine) setting, whereas \ref{rk215}  contains some global results. 
In  \ref{lem51c} we discuss the theorem of Dade-Orbanz, and in Proposition \ref{lem512} we mention some well known properties of the multiplicity, relating the partition of $X$ obtained from the multiplicity (in equimultiple points) with that on the reduced scheme $X_{red}$.
The discussion in sections 5 and 6 renders our formulation of local presentation corresponding to the multiplicity, which is summarized in Section 7.

Finally, in Section 8 we recall briefly the role of local presentation 
in resolution of singularities when the characteristic is zero.

In the paper we study properties of the multiplicity using classical results of commutative algebra. We pursue on the line of Lipman's exposition (section 4 in \cite{Lipman2}) where he presents the property of transversality in terms of finite extensions of rings and integral closure of ideals. 
\color{black} The local rings of function that we consider here can be expressed, in many ways, as a finite extension of a regular ring. It is in this setting that one can apply techniques of ramification. This has been a classical framework to study the multiplicity, and this strategy appears already in other works, and led to different approaches, for instance in Cutkosky's presentation in \cite{Cut} (see also \cite{Ab1}, \cite{CP1}, \cite{D}, and \cite{kaw}).  

I thank A. Nobile for several suggestions on a previous version. I also thank the referee for numerous indications that improved 
this presentation.

%
%
%
%

\section{Rees algebras on smooth schemes.}

\begin{parrafo}\label{sec45}
%
%
Assume, for simplicity, that $X$ is a scheme that can be embedded as a hypersurface in a smooth scheme $V$ over a perfect field $k$. A hypersuface is defined by an invertible ideal, say $I(X)\subset \calo_V$, 
and the multiplicity of $X$ at a point $x$ is also the order of the principal ideal $I(X)$ at the local regular ring $\calo_{V,x}$.  

The strategy we follow here is to consider the closed subset of the hypersurface $X$ consisting of points with highest multiplicity. A {\em Rees Algebra} over the smooth scheme $V$ will be attached to this closed set. More precisely, we will also consider the closed set of points of highest multiplicity of those schemes obtained from $X$ after applying one, or even several blow ups at smooth centers. A fundamental observation of Hironaka, to be discussed here, is to show there is significant information on the multiplicity obtained by analyzing these closed sets. In fact this analysis leads to a stratification of the highest multiplicity locus, and to a notion of equivalence discuss here in \ref{Defweakx}. 

A regular center, say $Y$ in $X$, is also included in $V$, and the blow up of $X$ along $Y$ can be recovered from the blow up of $V$, say $V\leftarrow V_1$, by taking the {\em strict transform} of $X$, say $X_1\subset V_1$, which is also a hypersurface. This strategy will lead us to consider the blow up of $V$ along smooth centers over the underlying perfect field $k$. 
%
%
%
%
%

Fix ideals $\{I_n, n\in \N\}$ so that $I_0=\calo_V$, and $I_n I_m\subset I_{n+m}$. Consider the algebra
$${\mathcal G}=\oplus_nI_nW^n(\subset \calo_V[W]).$$
It inherits a graded structure given by the powers of the variable $W$. Or equivalently, it is a graded subalgebra of $ \calo_V[W]$.

We say that $\G$ is a Rees algebra over $V$ when it is locally a finitely generated algebra: When $V$ is restricted to affine open set, say $V'\subset V$, there are global sections $f_1, \dots, f_r$, and positive integers $n_1, \dots, n_r$, so that the restriction of $\G$ to such open set is of the form $\calo_{V'}[f_1W^{n_1}, \dots,f_rW^{n_r} ]$.

\end{parrafo}
\begin{parrafo} \label{singularlocus}{\bf }  \cite[1.2]{EV}
{\rm  Let $V$ be a smooth scheme over a field $k$,  and let
${\mathcal G}=\oplus_nI_nW^n$ be a Rees 
algebra over $V$.  Then  the  {\em singular locus of ${\mathcal
G}$}, is 
$$\mbox{Sing }{\mathcal G}:=\bigcap_{n\in {\mathbb N}_{>0}}\{x\in V: \nu_x(I_n)\geq n\},$$
where   
$\nu_x(I_n)$ denotes the  order of $I_n$ in the regular local
ring ${\mathcal O}_{V,x}$. Observe that $\mbox{Sing }{\mathcal G}$  is closed  in 
  $V$. Moreover, if ${\mathcal G}$ is generated by $f_1W^{n_1},\ldots,f_sW^{n_s}$, it is easy to check that
\begin{equation}\label{442}
\mbox{Sing }{\mathcal G}=\bigcap_{i=1}^s\{x\in V: \nu_x(f_i)\geq n_i\}. 
\end{equation} }
\end{parrafo}

\begin{example} \label{ExampleA} 
{\rm Let $X\subset V$ be a hypersurface, and let $b$ be a non-negative 
integer. Then the singular locus of the Rees algebra generated by 
$I(X)$ in degree $b$, say $\calo_V[I(X)W^b](\subset \calo_V[W])$,
  is the closed set of points of $X$ which have multiplicity at
least $b$ (which may be empty). In the same manner, if 
$J\subset {\mathcal O}_V$ is an arbitrary non-zero sheaf of ideals, and $b$ is a
non-negative integer, then the singular locus of 
 the Rees algebra
generated by $J$ in degree $b$, say $\calo_V[JW^b](\subset \calo_V[W])$, consists of
the points of $V$ where the order of $J$ is at least $b$.

Of major interest in our discussion will be the algebra $\G=\calo_V[I(X)W^b]$ where $b$ is the highest multiplicity of $X$. Then $\Sing\G$ is the closed set of points in $X$ of highest multiplicity.

}
\end{example}

\begin{parrafo} \label{orderRees}{\bf}
\cite[6.3]{EV} {\rm Let ${\mathcal
G}=\bigoplus_{n\geq 0}I_nW^n$ be a Rees algebra on a smooth scheme $V$,  let $x\in \mbox{Sing }{\mathcal
G}$, and let $fW^n\in I_nW^n$. Set
$$\mbox{ord}_x(fW^n)=\frac{\nu_x(f)}{n}\in {\mathbb Q}\geq 1,$$
where, as before, $\nu_x(f)$ denotes the order of $f$ in 
${\mathcal O}_{V,x}$. Note that $\mbox{ord}_x(fW^n)\geq 1$ since
$x\in \mbox{Sing }{\mathcal G}$. Let
$$\mbox{ord}_x{\mathcal
G}=\mbox{inf}\{\mbox{ord}_x(fW^n):fW^n\in I_nW^n, n\geq 1\}.$$ If
${\mathcal G}$ is generated by
$\{f_{1}W^{n_1},\ldots,f_{s}W^{n_s}\}$ then it can be shown that 
$$\mbox{ord}_x{\mathcal G}=\mbox{min}\{\mbox{ord}_x(f_{i}W^{n_i}):
i=1,\ldots,s\},$$ and therefore, since
 $x\in \mbox{Sing }{\mathcal
G}$, $\mbox{ord}_x{\mathcal G}$ is a rational number that is greater or equal to one. }

\end{parrafo}

\begin{parrafo} \label{weaktransforms} {\bf Transforms of Rees algebras by blow ups.}
A smooth closed subscheme $Y\subset V$ is said to be {\em permissible} for ${\mathcal G}=\oplus_{n}I_nW^n\subset{\mathcal O}_{V}[W]$ if     $Y\subset
\mbox{Sing } {\mathcal G}$.  A {\em permissible transformation}  is the blow up at a permissible center $Y$, say $V\leftarrow V_1$. If 
$H_1\subset V_1$ denotes  the exceptional divisor, then for each
$n\in {\mathbb N}$, there is a unique factorization, say
$$I_n{\mathcal O}_{V_1}=I(H_1)^nI_{n,1}$$
for some  sheaf of ideals $J_{n,1}\subset {\mathcal
O}_{V_1}$, called the {\em weighted transform} of $J_n$.  The   {\em  transform of ${\mathcal
G}$}  in $V_1$ is: 
$${\mathcal G}_1:=\oplus_{n}I_{n,1}W^n.$$

\end{parrafo}

The next proposition gives a local description of the 
transform of a Rees algebra.
\begin{proposition}\label{localt}\cite[Proposition 1.6]{EV}
 Let ${\mathcal G}=\oplus_nI_nW^n$ be a
Rees algebra over a scheme $V$, which is smooth over a field $k$, and let
$V\leftarrow V_1$ be a permissible transformation. Assume, for simplicity, that 
$V$ is affine. If  
${\mathcal G}$ is generated by
$\{f_{1}W^{n_1},\ldots,f_{s}W^{n_s}\}$, then its  transform ${\mathcal
G}_1$ is generated by
$\{f_{1,1}W^{n_1},\ldots,f_{s,1}W^{n_s}\}$,
where   $f_{i,1}$ denotes a weighted transform of $f_{i}$ in $V_1$ 
for  $i=1,\ldots,s$.
\end{proposition}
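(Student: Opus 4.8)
The plan is to reduce to a single affine chart of the blow-up, turn the statement into an elementary identity between monomials in the generators, and then simply cancel the local equation of the exceptional divisor.

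First I would record how the graded pieces of a finitely generated Rees algebra look. Since $\G=\calo_V[f_1W^{n_1},\ldots,f_sW^{n_s}]$ is a graded $\calo_V$-subalgebra of $\calo_V[W]$, it is the image of the weighted polynomial ring $\calo_V[T_1,\ldots,T_s]$ with $\deg T_i=n_i$, so for every $n$
\[
I_n=\sum_{\underline a\in\N^{s},\ \langle\underline a,\underline n\rangle=n}f_1^{a_1}\cdots f_s^{a_s}\,\calo_V,\qquad\text{where }\langle\underline a,\underline n\rangle:=\textstyle\sum_{i}a_in_i .
\]
Next, since $Y\subset\Sing\G=\bigcap_i\{x:\nu_x(f_i)\ge n_i\}$ (see \ref{singularlocus}), the order of $f_i$ at the generic point of $Y$ is at least $n_i$; as $Y$ is a regular subscheme of the regular scheme $V$ this forces $f_i\in I(Y)^{n_i}$ locally on $V$. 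Hence $f_i\,\calo_{V_1}\subset I(Y)^{n_i}\calo_{V_1}=I(H_1)^{n_i}$, and since $I(H_1)$ is invertible the total transform factors uniquely as $f_i\,\calo_{V_1}=I(H_1)^{n_i}(f_{i,1})$ with $f_{i,1}\in\calo_{V_1}$; this $f_{i,1}$ is the weighted transform of $f_i$.

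Now I would work in a fixed affine chart $V_1'\subset V_1$ on which $I(H_1)\calo_{V_1'}=(z)$ with $z$ a nonzerodivisor --- this uses that $V_1$ is smooth, so the exceptional divisor is a Cartier divisor. There $f_i=z^{n_i}f_{i,1}$, so for each $\underline a$ with $\langle\underline a,\underline n\rangle=n$ one has $f_1^{a_1}\cdots f_s^{a_s}=z^{\langle\underline a,\underline n\rangle}f_{1,1}^{a_1}\cdots f_{s,1}^{a_s}=z^{n}f_{1,1}^{a_1}\cdots f_{s,1}^{a_s}$. Substituting into the description of $I_n$ gives
\[
I_n\calo_{V_1'}=z^{n}\sum_{\langle\underline a,\underline n\rangle=n}f_{1,1}^{a_1}\cdots f_{s,1}^{a_s}\,\calo_{V_1'} .
\]
On the other hand, by the definition of the transform $I_n\calo_{V_1'}=I(H_1)^nI_{n,1}=z^nI_{n,1}$, and cancelling the nonzerodivisor $z^n$ yields $I_{n,1}=\sum_{\langle\underline a,\underline n\rangle=n}f_{1,1}^{a_1}\cdots f_{s,1}^{a_s}\,\calo_{V_1'}$, which is exactly the degree-$n$ component of $\calo_{V_1'}[f_{1,1}W^{n_1},\ldots,f_{s,1}W^{n_s}]$. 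Since $V_1$ is covered by finitely many such charts and the $f_{i,1}$ are induced by the globally given $f_i$, this shows $\G_1=\calo_{V_1}[f_{1,1}W^{n_1},\ldots,f_{s,1}W^{n_s}]$.

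The only step that is not purely formal is the implication $\nu_{\eta_Y}(f_i)\ge n_i\Rightarrow f_i\in I(Y)^{n_i}$ used above: it amounts to $I(Y)^{n_i}$ being unmixed, with $I(Y)$ as its only associated prime, which holds precisely because $Y$ is regularly embedded in $V$ (so locally $I(Y)$ is part of a regular system of parameters and $I(Y)^{n_i}$ agrees with its symbolic power). Granting that, the rest is the cancellation computation above, and the proposition follows.
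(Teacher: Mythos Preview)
Your argument is correct and follows the same route as the paper's proof: both localize at the generic point of the center to obtain $\nu_y(f_i)\ge n_i$, deduce the factorization $f_i\,\calo_{V_1}=I(H_1)^{n_i}(f_{i,1})$ on each affine chart, and then verify that the $f_{i,1}W^{n_i}$ generate $\G_1$. The difference is only one of detail: where the paper writes ``It is easy to verify that $\{f_{1,1}W^{n_1},\ldots,f_{s,1}W^{n_s}\}$ generate $\G_1$'', you actually carry out that verification via the explicit monomial description of $I_n$ and the cancellation of $z^n$, and you make explicit the point that $\nu_{\eta_Y}(f_i)\ge n_i$ implies $f_i\in I(Y)^{n_i}$ because symbolic and ordinary powers of a regular prime coincide.
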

\proof Assume, for simplicity, that the center of the transformation 
is irreducible. Let $y\in V$ denote the generic point of this smooth center. Then $y\in \mbox{Sing }{\mathcal G}$, or equivalently 
$\nu_y(f_i)\geq n_i$, for $i=1, \dots ,s$. In particular, at any affine chart in $V_1$, there are expressions of the form 
$$f_i{\mathcal O}_{V_1}=I(H_1)^{n_i}f_{n_i,1}.$$
It is easy to verify that $\{f_{1,1}W^{n_1},\ldots,f_{s,1}W^{n_s}\}$ generate ${\mathcal
G}_1$ at such affine chart of $V_1$.
\endproof

If $\G=\calo_V[I(X)W^b]$ is the algebra attached to a hypersurface $X$, where $b$ is the highest multiplicity of $X$, then $\G_1$ is the algebra attached to $X_1$, the strict transform of $X$ in $V_1$, and $\Sing\G_1$ is the closed set of points in $X_1$ of multiplicity $b$.

\begin{parrafo}\label{pback} {\bf } It will be technically useful, in our discussion, to consider a new kind of transformation. Given a smooth scheme $V$ and a positive integer $s$, then 
$ V \stackrel{pr^{(s)}}{\longleftarrow}V\times_{k}\mathbb{A}^s$
will denote the projection on the first coordinate. A Rees algebra 
${\mathcal G}=\oplus_nI_nW^n$ on $V$ has a natural lifting to a Rees algebra on $V\times_{k}\mathbb{A}^s$, say 
$(pr^{(s)})^{*}(\mathcal {G})$, obtained simply by taking extensions. 
\end{parrafo}
\begin{definition} 
\label{local_sequenceV} 
{\rm Let $(V,\mathcal{G})$ denote a Rees algebra ${\mathcal G}$ over $V$. A sequence over $(V,\mathcal{G})$, say
\begin{equation}\label{svlap3}
\begin{array}{ccccccc} 
V^{}_0=V^{} & \leftarrow & V^{}_1 & \leftarrow & \ldots & \leftarrow V^{}_m\\
{\mathcal G}_0^{}= {\mathcal G}^{} & & {\mathcal G}_1^{} & &\ldots  & {\mathcal G}_m^{}
\end{array}
\end{equation} 

%
will be constructed by setting for each index $i$, $i=0,1,\ldots,m-1$, $V_i \longleftarrow V_{i+1}$ either as:

A) a blow up at a center $Y_i \subset \Sing({\mathcal G}_i)$
${\mathcal G}_i$, and ${\mathcal G}_{i+1}$ is the transform of ${\mathcal G}_{i}$ 
in the sense of  \ref{weaktransforms}), 

B) $V_{i+1}= V_i\times_{k}\mathbb{A}^s$, for some integer $s\geq1$, where the morphism is the projection
$ V_i \stackrel{pr}{\longleftarrow}V_i\times_{k}\mathbb{A}^s,$
and ${\mathcal G}_{i+1}= (pr^{(s)})^{(*)}(\mathcal {G}_i)$ 

C) an open restriction, in which case ${\mathcal G}_{i+1}$ is the restriction of ${\mathcal G}_{i}$.}
\end{definition}

Fix  $(V,\mathcal{G})$ as before, and note that a sequence (\ref{svlap3})
provides a collection of closed subsets 
\begin{equation}\label{ABCs}
\mbox{Sing }{\mathcal G}_0\subset V_0, \ \    \mbox{Sing }{\mathcal G}_1\subset V_1,\ldots, \ \  \mbox{Sing }{\mathcal G}_m\subset V_m
\end{equation}

In our discussion the Rees algebra $\mathcal{G}$, or say the pair $(V,\mathcal{G})$, provides a procedure to obtain closed sets in this previous sense. Namely, the pair $(V,\mathcal{G})$ defines closed sets for any sequence (\ref{svlap3}). This leads to the following definition.

\begin{definition} \label{Defweakx}  Fix two Rees algebras $\mathcal{G}_1$ and $\mathcal{G}_2$ on a smooth scheme $V$. The pairs $(V,\mathcal{G}_1)$ and $(V,\mathcal{G}_2)$  
  are said to be {\em weakly equivalent} if: 

(i) $\mbox{Sing }\mathcal{G}_1=\mbox{Sing
}\mathcal{G}_2$;  

(ii) Any sequence of transformations of $(V,\mathcal{G}_1)$ induces transformations of $(V,\mathcal{G}_2)$,
 and conversely.

(iii) Given any sequence of transformations of $(V,\mathcal{G}_i)$ , for $i=1$ or $i=2$, say
\begin{equation}\label{svlap33}
\begin{array}{ccccccc} 
V^{}_0=V^{} & \leftarrow & V^{}_1 & \leftarrow & \ldots & \leftarrow V^{}_m\\
{\mathcal G}_{i,0}^{}= {\mathcal G_i}^{} & & {\mathcal G}_{i,1}^{} & &\ldots  & {\mathcal G}_{i,m}^{}
\end{array}
\end{equation} 
there is an equality of closed sets,  
$$\mbox{Sing }(\mathcal{G}_{1,j})=\mbox{Sing }(\mathcal{G}_{2,j})$$
for $0\leq j \leq m$. 
\end{definition}
\begin{remark}\label{rk210}
So $(V,\mathcal{G}_1)$  and $(V,\mathcal{G}_2)$ are weakly equivalent when they define the same closed sets. We shall not distinguish equivalent pairs. Concerning the condition C) in Definition \ref{local_sequenceV} , of restrictions to open sets: suppose that  $(V,\mathcal{G})$ is a pair and fix two open sets $U$ and $U'$ in $V$. Note that if $U\cap \mbox{Sing }{\mathcal G}=U'\cap \mbox{Sing }{\mathcal G}$, then we will not distinguish the restriction on $U$ with that on $U'$.
The following fundamental result, due to Hironaka, illustrates the importance of weak equivalence.
\end{remark}
\begin{theorem}\label{Th416}{\bf } If $(V,\mathcal{G}_1)$ and $(V,\mathcal{G}_2)$ are {\em
 weakly equivalent}, 
 then:

1) $\mbox{Sing }{\mathcal G_1}=\mbox{Sing }{\mathcal G_2}$ in $V$.

2) Given $x\in \mbox{Sing }{\mathcal G_1}$, $\mbox{ord}_x{\mathcal
G_1}=\mbox{ord}_x{\mathcal
G_2}.$
\end{theorem}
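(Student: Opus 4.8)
The plan is to prove both assertions by exploiting the fact that $\mathrm{Sing}\,\mathcal{G}$ and $\mathrm{ord}_x\mathcal{G}$ are both recoverable from the closed sets produced along sequences of transformations of type A), B), C). Part 1) is immediate: it is precisely condition (i) in Definition \ref{Defweakx}, taken with the empty sequence $m=0$. So the real content is part 2), the invariance of the order.

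\textbf{Reduction to a point blow-up computation.} Fix $x\in\mathrm{Sing}\,\mathcal{G}_1=\mathrm{Sing}\,\mathcal{G}_2$. After an open restriction (type C)) I may assume $x$ is a closed point, and then, using a transformation of type B) to add enough affine coordinates, I may assume that the residue field at $x$ is such that $x$ is rational, or at least arrange the bookkeeping so that $x$ is a sufficiently ``generic'' point of $V$; the point is that the transformations B) and C) do not change the order of either algebra at the relevant point, since extension of scalars to $\mathbb{A}^s$ and localization preserve the order of an ideal in a regular local ring. So it suffices to read off $\mathrm{ord}_x\mathcal{G}_i$ from the behavior of $\mathrm{Sing}$ of the two algebras under a single permissible blow-up.

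\textbf{The key step: detecting the order via a point blow-up.} The idea I would use is the following. Let $\pi\colon V_1\to V$ be the blow-up at the closed point $x$ (which is permissible since $x\in\mathrm{Sing}\,\mathcal{G}_i$), with exceptional divisor $H_1\cong\mathbb{P}^{d-1}$. Consider the transform $\mathcal{G}_{i,1}$ as in \ref{weaktransforms}. I claim that the number
$$\mathrm{ord}_x\mathcal{G}_i=\max\{\,c\in\mathbb{Q}_{\geq 1}\ :\ H_1\subset\mathrm{Sing}\,\mathcal{G}_{i,1}\ \text{after further rescaling by }c\,\}$$
or, more cleanly, that $\mathrm{ord}_x\mathcal{G}_i\geq c$ if and only if $H_1\subset\mathrm{Sing}\,(\mathcal{G}_i\odot\text{(something of weight }c))_1$. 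Concretely: using the local generators $f_jW^{n_j}$, \ref{localt} says $\mathcal{G}_{i,1}$ is generated by the weighted transforms $f_{j,1}W^{n_j}$, where $f_j\mathcal{O}_{V_1}=I(H_1)^{n_j}f_{j,1}$ at an affine chart; since $\nu_x(f_j)=\nu_{x}(f_j)$ and the blow-up of a regular local ring at its maximal ideal has the property that $\nu_{H_1}(f_j)=\nu_x(f_j)$, we get $\nu_{H_1}(f_{j,1})=\nu_x(f_j)-n_j$. Hence the order of $\mathcal{G}_{i,1}$ along the generic point of $H_1$ is $\min_j\frac{\nu_x(f_j)-n_j}{n_j}=\mathrm{ord}_x\mathcal{G}_i-1$. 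Thus: $H_1\subset\mathrm{Sing}\,\mathcal{G}_{i,1}$ if and only if $\mathrm{ord}_x\mathcal{G}_i\geq 2$, and more generally, after iterating point blow-ups (always at a point lying over $x$, on the current exceptional locus), one can read $\lceil\mathrm{ord}_x\mathcal{G}_i\rceil$ from how many times $H$ stays inside $\mathrm{Sing}$. To pin down the exact rational value, I would also feed in, via type B) and a comparison, the algebras $\mathcal{G}_i$ together with test algebras $\mathcal{O}_V[gW^c]$ for auxiliary $g$, or — following the standard device — note that $\mathrm{ord}_x\mathcal{G}_i=c$ can be characterized by: for every $q$, the blow-up sequence that realizes "the generic point of the last exceptional divisor stays in $\mathrm{Sing}$" terminates in exactly the pattern dictated by the continued-fraction-type expansion of $c$. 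The crucial point is that \emph{all} of these conditions are phrased purely in terms of equalities of $\mathrm{Sing}$ along sequences of transformations of $(V,\mathcal{G}_i)$.

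\textbf{Conclusion from weak equivalence.} Since $(V,\mathcal{G}_1)$ and $(V,\mathcal{G}_2)$ are weakly equivalent, condition (ii) guarantees that any sequence of transformations of one induces a sequence of transformations of the other over the same tower $V_0\leftarrow V_1\leftarrow\cdots\leftarrow V_m$, and condition (iii) guarantees $\mathrm{Sing}\,\mathcal{G}_{1,j}=\mathrm{Sing}\,\mathcal{G}_{2,j}$ for all $j$. Running the point-blow-up procedure above simultaneously for both algebras, the two towers coincide and the pattern "exceptional divisor $\subset\mathrm{Sing}$" is identical at every stage; therefore the numerical invariant it computes, namely $\mathrm{ord}_x\mathcal{G}_1$ and $\mathrm{ord}_x\mathcal{G}_2$, must agree.

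\textbf{Main obstacle.} The hard part is making the ``read off the rational number $\mathrm{ord}_x\mathcal{G}$ from a finite combinatorial pattern of $\mathrm{Sing}$-inclusions'' step rigorous and canonical, rather than just handwaving about continued fractions. One must ensure that the sequence of point blow-ups used is itself intrinsic (so that it is genuinely a "sequence of transformations of $(V,\mathcal{G})$" in the precise sense of Definition \ref{local_sequenceV}, hence governed by weak equivalence), and one must handle the case where $\mathrm{ord}_x\mathcal{G}$ is irrational-looking at finite stages but is in fact rational — here the finite generation of the Rees algebra is what saves us, since $\mathrm{ord}_x\mathcal{G}=\min_j\nu_x(f_j)/n_j$ is manifestly rational. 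I expect the cleanest route in the actual proof is to reduce, via \ref{localt} and a careful choice of an auxiliary flag of smooth subvarieties through $x$, to the already-known combinatorial characterization of the order used in resolution theory, but packaging that so it visibly only uses data preserved by weak equivalence is the delicate point.
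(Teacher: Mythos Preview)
Your handling of part 1) is fine, and your overall strategy for part 2)---recovering $\mathrm{ord}_x\mathcal{G}$ from the pattern of singular loci along permissible sequences---is correct in spirit. The computation that a single point blow-up at $x$ yields order $\mathrm{ord}_x\mathcal{G}-1$ along the generic point of the exceptional divisor is also right. But the gap you yourself flag as the ``main obstacle'' is real, and your sketch does not close it.

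Point blow-ups in $V$ alone detect only whether $\mathrm{ord}_x\mathcal{G}$ exceeds a given integer: if $\mathrm{ord}_x\mathcal{G}_1=3/2$ and $\mathrm{ord}_x\mathcal{G}_2=5/3$, then after one blow-up at $x$ the exceptional divisor lies in neither singular locus, and further point blow-ups do not separate these values in any evident way. Your proposed remedy via ``test algebras $\mathcal{O}_V[gW^c]$'' is not available within the framework: weak equivalence controls only the singular loci of transforms of $\mathcal{G}_1$ and $\mathcal{G}_2$ themselves, so you cannot introduce an auxiliary $g$ by hand and still invoke condition~(iii).

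The paper does not give a self-contained proof either; it refers to Hironaka's trick (\cite{Cut2}, Proposition~6.27, and \cite{Hironaka771}, \cite{Hironaka777}) and explicitly stresses that the type~B) transformations---multiplication by affine spaces---are \emph{crucial} for part~2), not merely a device for rationalizing the residue field as you use them. The point your argument is missing is that the new affine coordinate \emph{is} the test element: after pulling back to $V\times\mathbb{A}^1$ one has the permissible smooth center $\{x\}\times\mathbb{A}^1$, and a suitably designed sequence interleaving blow-ups along (strict transforms of) this curve with blow-ups at closed points on it allows one, for each rational $p/q$, to produce a transform whose singular locus is empty if and only if $\mathrm{ord}_x\mathcal{G}$ lies on a prescribed side of $p/q$. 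This intrinsic comparison is what pins down the exact rational value from the closed-set data alone, and it is precisely the ingredient your proposal does not supply.
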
	

We simply indicate that the apparently artificial concept of multiplication by affine spaces, introduced in \ref{pback}, is crucial for the proof of part 2) (see \cite{Cut2}, Proposition 6.27). 
The original proof is a particular application of the notions of Groves and Polygroves developed by Hironaka in the seventies (see e.g., \cite{Hironaka771}, \cite{Hironaka777}). These notions lead to the concept of weak equivalence, and they also played a central role in the proof of the natural properties of constructive resolution, as we shall indicate along this paper.

\begin{parrafo}\label{GenRes1} {\em On local presentations I.} One of the main objectives of this paper is that of defining local presentations for the multiplicity. These will be formulated in terms of Rees algebras. 

We start with a variety $X$ over a perfect field $k$, and we fix a point $\xi\in X$ of multiplicity $n$. After a suitable restriction of $X$ to an \'etale neighborhood of  the point, say $X$ again, we will show that there is an embedding 
\begin{equation}\label{tacionnn} X\subset W,
\end{equation} where $W$ is affine and smooth over $k$, together with a Rees algebra $\mathcal{G}$ on $W$ so that 
\begin{equation}\label{tacionn}F_n(X)=\Sing(\mathcal{G})
\end{equation}
where $n$ is the highest multiplicity and $F_n(X)$ denotes the set of $n$-fold points of $X$.

If $W_{1}= W\times_{k}\mathbb{A}^s$, for some integer $s\geq1$, 
then there is a natural inclusion 
$$X_1=X \times_{k}\mathbb{A}^s\subset W_1= W\times_{k}\mathbb{A}^s.$$

Note that the $F_n(X_1)=pr^{(s)})^{(-1)}(F_n(X))$, and $F_n(X_1)=\Sing( (pr^{(s)})^{(*)}(\mathcal {G}))$
where  
$ W_1 \stackrel{pr}{\longrightarrow} W$
denotes the projection, and  $(pr^{(s)})^{(*)}(\mathcal {G})$ is obtained by extending the algebra $\mathcal {G}$ to an algebra over $ W_1$ (total transform).
Moreover, we will produce $\mathcal{G}$ so that any sequence 

\begin{equation}\label{svlapf3}
\begin{array}{ccccccc} 
W^{}_0=W^{} & \leftarrow & W^{}_1 & \leftarrow & \ldots & \leftarrow W^{}_m\\
{\mathcal G}_0^{}= {\mathcal G}^{} & & {\mathcal G}_1^{} & &\ldots  & {\mathcal G}_m^{}
\end{array}
\end{equation} 
as that in (\ref{svlap3}), induces a sequence over $X$, say 

\begin{equation}\label{svlapf4}
\begin{array}{ccccccc} 
W^{}_0=W^{} & \leftarrow & W^{}_1 & \leftarrow & \ldots & \leftarrow W^{}_m\\
\cup & & \cup & & & \cup \\
{X}_0^{}= X & & {X}_1^{} & &\ldots  & {X}_m^{}
\end{array}
\end{equation} 
and, in addition 
\begin{equation}\label{xdr}\Sing( {\mathcal G}_i^{} )=F_n(X_i), \  \ i=0,1, \dots, m.
\end{equation}

Some indications will be given below as to how these algebras relate to local presentations, at least as this latter notion appears in the introduction. Before doing so let us formulate the definition of resolution a Rees algebra, and indicate why this would lead to the simplification of singularities over perfect fields. 
\end{parrafo}

\begin{parrafo}\label{transp}{\em On Rees algebras and hypersurfaces with normal crossings.}

Let $V$ be smooth over a field $k$, and let 
$E=\{H_1, \dots , H_s\}$ be a collection of smooth hypersurfaces having only normal crossings. This condition arises in the formulation of resolution of singularities. 

A blow up with a smooth center $Y$ is said to be {\em permissible} for $(V, E)$ if $Y$ has normal crossings with the union of hypersurfaces in $E$. In this case let $V\leftarrow V_1$ be the blow up, and let
$$(V,E=\{H_1, \dots , H_s\}) \leftarrow (V_1, E_1=\{H_1, \dots , H_s, H_{s+1}\}),$$
denote the {\em transform} of $(V,E)$, where the hypersurface $H_i\in E_1$ is the strict transform of $H_i\in E$, for $i=1, \dots , s$, and $H_{s+1}$ is the exceptional hypersurface introduced by the blow up.

There are several problems of {\em embedded resolution} where special attention is to be drawn on the hypersurface that arise as exceptional hypersurfaces, after applying a sequence of blow ups. One is that of embedded resolution of singularities, in which we start with a singular scheme $X$ embedded in a smooth scheme $V$ and we want to obtain a sequence of blow ups over $V$ so that all exceptional hypersurfaces have normal crossings, and the strict transform of $X$ is smooth and also has normal crossings with the exceptional hypersurfaces.

Another related result is that of {\em log-principalization} of ideals. 
There we start with an ideal, say $I$ in $V$, and we want to obtain 
a sequence of blow-ups as above, so that the total transform of $I$  is an invertible sheaf of ideals supported on smooth hypersurfaces having only normal crossings.
\begin{definition}
\label{local_sequenceA} 
{\rm  Let $V_0$ be smooth over field $k$, and let $E_0$ be a collection of smooth hypersurfaces with only normal crossings. 
A {\em sequence over   $(V_0,E_0)$}  is a sequence of the form 
$$(V_0,E_0) \longleftarrow
 (V_1,E_1)  \longleftarrow \cdots \longleftarrow
(V_m,E_m)$$
where  for $i=0,1,\ldots,m-1$, each $(V_i,E_1) \longleftarrow 
(V_{i+1},E_{i+1})$ is either an open restriction, 
the blow up at a smooth closed subscheme as in \ref{sec45}, or $V_{i+1}= V_i\times_{k}\mathbb{A}^s$, for some integer $s\geq 1$, the morphism is 
$ V_i \stackrel{pr}{\longleftarrow}V\times_{k}\mathbb{A}^s,$
and $E_{i+1}$ is the collection obtained by taking the pull-back of hypersurfaces in $E_i$. }
\end{definition}

\end{parrafo}

\begin{definition} 
\label{local_sequenceB} 
{\rm Let $(V,E)$ be as above, and let ${\mathcal G}$ be a Rees algebra over $V$. We denote these data by $(V,\mathcal{G},E)$, called {\em basic object}. A sequence over $(V,\mathcal{G},E)$, say
\begin{equation}\label{ABC1}
(V,\mathcal{G},E)=(V_0, \mathcal{G}_0,E_0) \longleftarrow
(V_1,\mathcal{G}_1, E_1)\longleftarrow \cdots \longleftarrow
(V_m,\mathcal{G}_m, E_m),
\end{equation}
will be constructed by setting for index $i$, $i=0,1,\ldots,m-1$, $V_i \longleftarrow V_{i+1}$ either as:

A) a blow up at a center $Y_i$, $(V_i, E_i) \leftarrow (V_{i+1}, E_{i+1})$ as in \ref{sec45}, and, in addition $Y_i$ is permissible  for 
${\mathcal G}_i\subset {\mathcal O}_{V_i}[W]$ (and ${\mathcal G}_{i+1}$ is the transform of ${\mathcal G}_{i}$ 
in the sense of  \ref{weaktransforms}), 

B) $V_{i+1}= V_i\times_{k}\mathbb{A}^s$, for some integer $s\geq1$, the morphism is the projection
$ V_i \stackrel{pr}{\longleftarrow}V_i\times_{k}\mathbb{A}^s,$
${\mathcal G}_{i+1}= (pr^{(s)})^{(*)}(\mathcal {G}_i)$ and also 
$(V_i, E_i)\leftarrow  (V_{i+1}, E_{i+1})$ are constructed by taking pull-backs, or

C) an open restriction.}
\end{definition}

\begin{definition} \label{Defweak}  Fix $(V, E)$ as above. Two basic objects, $(V,\mathcal{G}_1,E)$ and $(V,\mathcal{G}_2,E)$  
  are said to be {\em equivalent} if: 

(i) $\mbox{Sing }\mathcal{G}_1=\mbox{Sing
}\mathcal{G}_2$;  

(ii) Any sequence of transformations of $(V,\mathcal{G}_1,E)$ induces transformations of $(V,\mathcal{G}_2,E)$,
 and conversely.

(iii) Given any sequence of transformations of $(V,\mathcal{G}_i,E)$ , for $i=1$ or $i=2$, say
\begin{equation}\label{bvc}
(V,\mathcal{G}_i,E)=(V_0,\mathcal{G}_{i,0}, E_0) \longleftarrow
(V_1,\mathcal{G}_{i,1}, E_1)\longleftarrow \cdots \longleftarrow
(V_m,\mathcal{G}_{i,m}, E_m),
\end{equation}
there is an equality of closed sets,  
$\mbox{Sing }(\mathcal{G}_{1,j})=\mbox{Sing }(\mathcal{G}_{2,j})$
for $0\leq j \leq m$. 
\end{definition}
\begin{remark} The conditions imposed on the sequences in Definition  \ref{local_sequenceB} seem to be more restrictive then those in Definition \ref{local_sequenceV}. However, 
two basic objects, $B=(V,\mathcal{G},E)$ and $B'=(V,\mathcal{K},E)$ are equivalent if and only if $\mathcal{G}$ and $\mathcal{K}$ are weakly equivalent (see \cite{Indiana}, Section 8).

\end{remark}

\begin{definition}\label{resbo1} Fix a basic object $\mathcal B_0=(V_0,\mathcal G_0, E_0)$, we say that a sequence 
\begin{equation}\label{ABC2}
\begin{array}{ccccccc}

(V_{0}, \mathcal G_0,E_0) & \longleftarrow & (V_{1}, \mathcal G_1,E_1) & \longleftarrow &
\cdots  &\longleftarrow &
(V_{m}, \mathcal G_m, E_m)
\end{array}
\end{equation}
as that in Definition  \ref{local_sequenceB}, is a {\em resolution} of $\mathcal B_0$ if it consists only on blow ups, and
$\Sing(\mathcal G_m)=\emptyset$.
\end{definition}

\begin{remark}\label{541} 1) If we fix an ideal $I$ on a smooth $V$, then the Rees ring of $I$, say ${\mathcal G}_I:=\oplus_{n}I^{n}W^n$ is a Rees algebra, and 
a resolution of $(V, {\mathcal G}_I,E=\emptyset )$ induces a log principalization of $I$ over $V$.

2) Fix $X\subset W$, an integer $n$, and an algebra $\mathcal{G}$ as in \ref{GenRes1}. Note that a resolution of the basic object 
$(W, {\mathcal G}, E=\emptyset )$ produces a sequences (\ref{svlapf3}) and  (\ref{svlapf4}) so that $F_n(X_m)=\emptyset.$
Namely $X_m$ has highest multiplicity strictly smaller then $n$.
Recall that a variety is regular when the highest multiplicity is one.

\end{remark}
\begin{parrafo}\label{GenRes2}{\em On local presentations II.} 
In  \ref{GenRes1} we fixed an embedding $X\subset W$, where $W$ is smooth over a perfect field $k$, and we claim that there is an algebra $\mathcal{G}$, attached to the $n$-fold points in $X$, with prescribed properties. 
The construction of $\mathcal{G}$ will be done locally, in \'etale topology. In fact, given $X$ we will construct, \'etale locally at any closed point an inclusion and a morphism
\begin{equation}\label{ledv} X\subset W \to V
\end{equation}
where $\beta: W\to V$ is a smooth morphism of smooth schemes inducing a finite morphism $\beta: X\to V$ (see 
(\ref{eccp})). Finally $\mathcal{G}$ will be an algebra over $W$ that will be constructed using these data. A property is that $F_n(X)=\Sing(\mathcal{G}) (\subset W)$ and the morphism will map the closed set $F_n(X)$ homeomorphically into its image $\beta(F_n(X))$ in $V$.
Moreover, for any sequence 
\begin{equation}\label{svlapf31}
\begin{array}{ccccccc} 
W^{}_0=W^{}  & \leftarrow & W^{}_1 & \leftarrow & \ldots & \leftarrow W^{}_m\\ 
{\mathcal G}_0^{}= {\mathcal G}^{} & & {\mathcal G}_1^{} & &\ldots  & {\mathcal G}_m^{}
\end{array}
\end{equation} 
obtained by blowing up at smooth permissible centers as in \ref{weaktransforms}, there will be a commutative diagram
\begin{equation}\label{svlapf32}
\begin{array}{ccccccc} 
W^{}_0=W^{}  & \leftarrow & W^{}_1 & \leftarrow & \ldots & \leftarrow W^{}_m\\ 
\downarrow & & \downarrow&&& \downarrow \\
{V}_0^{}= {V}^{} &\leftarrow  & {V}_1^{} & \leftarrow &\ldots  &\leftarrow  {V}_m^{}
\end{array}
\end{equation} 
where the lower row is a sequence of blow ups at regular centers, and where each $$\beta_i: W_i\to V_i$$ is smooth and maps the closed set $\Sing(\mathcal{G}_i)$ homeomorphically into its image 
$\beta_i(\Sing(\mathcal{G}_i))$ in $V_i$, for $i=0, \dots ,m$.
In addition, one obtains from (\ref{svlapf31})  

\begin{equation}\label{svlavc4}
\begin{array}{ccccccc} 
W^{}_0=W^{} & \leftarrow & W^{}_1 & \leftarrow & \ldots & \leftarrow W^{}_m\\
\cup & & \cup & & & \cup \\
{X}_0^{}= X &  \leftarrow& {X}_1^{} & \leftarrow &\ldots  & \leftarrow {X}_m^{}
\end{array}
\end{equation} 
where the lower row is a sequence of blow ups, and 
\begin{equation}\label{xdr}\Sing( {\mathcal G}_i^{} )=F_n(X_i), \  \ i=0,1, \dots, m.
\end{equation}

Recall that the map $X\to V$, obtained by the restriction to $X$ of $\beta: W\to V$, is finite. The same holds for $X_i\to V_i$, obtained by restriction of $\beta_i: W_i\to V_i$. 

The finite morphism $X\to V$ will be given by a finite 
extension $S \subset B=S[\Theta_1, \dots, \Theta_r]
$.
for a smooth $k$-algebra $S$, and $V=\Spec(S)$. The inclusion $X=\Spec(B) \subset W$ will be given by the surjection 
$T=S[X_1, \dots, X_r]\to S[\Theta_1, \dots, \Theta_r]$. So $W=\Spec(T)$, and we will assign a monic polynomial $f_{\Theta_i}(X_i)\in T$ to each 
$\Theta_i$. Finally we will set the Rees algebra
\begin{equation}\label{ledv1} \mathcal G=T[f_{\Theta_1}(X_1)Y^{n_1}, \dots, f_{\Theta_r}(X_r)Y^{n_r}] \subset T[Y].
\end{equation} 
where $Y$ is a variable over the ring $T$.

Note that the expression of the local presentation in (\ref{tacion}) follows from (\ref{442}) (see Prop. \ref{localt}). The arguments that will lead to the construction 
of the polynomials $f_{\Theta_i}(X_i)\in T$, with the previous properties, to be discussed in Sections 5 and 6, will be motivated by the notion of elimination.

\begin{remark}{\em (Over fields of characteristic zero)}

Resolution of basic object would lead to resolution of singularities. The existence of resolution of basic objects is known to hold over fields of characteristic zero. The proof of this result is addressed by induction on the dimension of the ambient space $V$.

Hironaka's approach for this form of induction is by choosing suitable smooth hypersurfaces in $V$, known as hypersurfaces of maximal contact, and replacing the basic object over $V$ by a basic object 
over this smooth hypersurface. 
There is alternative approach, in which the restriction of $V$ to smooth hypersurfaces is replaced by a smooth morphism, say $V\to V'$, where $V'$ is  smooth. This alternative approach involves a techniques of elimination. In fact one can construct an algorithm of resolution of a basic object using this technique. 

We refer to \cite{BrV1} to show how elimination leads to an algorithm of resolution of basic objects. Our discussion of resolution of singularities in the last section 8 will make use of 
this algorithm.
%
%
Let us mention, in passing, another application of this technique, that will follow from our discussion in \ref{Cor55}, valid if $k$ is a field of characteristic zero: One can 
attach to $V$ in  \ref{GenRes2} a new basic object, say $\mathcal{F}$, with the following property.
For all sequence (\ref{svlapf31}), the lower row in the sequence (\ref{svlapf32}) induces a sequence 
\begin{equation}\label{svlapf33}
\begin{array}{ccccccc} 
V^{}_0=V^{}  & \leftarrow & V^{}_1 & \leftarrow & \ldots & \leftarrow V^{}_m\\ 
{\mathcal F}_0^{}= {\mathcal F}^{} & & {\mathcal F}_1^{} & &\ldots  & {\mathcal F}_m^{}
\end{array}
\end{equation} 
and 
\begin{equation}\label{svlapf34}
\beta_i(\Sing({\mathcal G}_i))=\Sing({\mathcal F}_i) \ \ i=0, \dots, m.
\end{equation}
\end{remark}

\end{parrafo}


\begin{section}{Elimination algebras.}

We discuss here some techniques of algebraic elimination mentioned above, and relevant for the resolution of basic objects. The main result is Theorem \ref{eliRM}, formulated over fields of characteristic zero, where resolution of basic objects can be constructed. 

The aim of this paper is the study the multiplicity of varieties over perfect fields of arbitrary characteristic, and this section has also been included here because it largely motivates the discussion of the main results in this paper, addressed in Sections 6 and 7.

\color{black}
\begin{parrafo}\label{pel} {\bf Elimination algebras.} {\rm 
Assume that $X$ is a hypersurface in a smooth scheme $V^{(d)}$, of dimension $d$, and let $n$ denote the highest multiplicity at points of $X$. At a suitable \'etale neighborhood of a point $x\in \Sing(I(X),n)$ (at a point of multiplicity $n$), say $(V_1^{(d)}, x')$, there is a smooth morphism $\pi:V_1^{(d)}\to V^{(d-1)}$, and an element $Z$ of order one at $\calo_{V_1^{(d)},x'}$ so that: 
\begin{enumerate}
\item[(i)]  The smooth line $\pi^{-1} (\pi(x'))$ and the smooth hypersurface $\{ Z=0\}$ cut transversally at  $x'$.

\item[(ii)]  If $X'$ is the pull back of $X$ in $V_1^{(d)}$, then 
 $I(X')$ is spanned by a polynomial of the form $$f(Z)=Z^n+a_1Z^{n-1}+\ldots+a_1Z+a_0\in 
\calo_{V^{(d-1)}}[Z],$$
and the restriction of $V_1^{(d)}\to V^{(d-1)}$, namely $X'\to V^{(d-1)}$, is a finite morphism.
\end{enumerate}
(see \cite{BrV1}, Prop 32.3.)
%
%
This justifies the interest in studying the points of multiplicity $n$ of a hypersurface defined by a monic polynomial of degree $n$ (same integer $n$).
}
\end{parrafo} 
\begin{parrafo}\label{p002}
A first motivation of our forthcoming discussion can be formulated for polynomials over a field. Fix a field $K$ and a monic polynomial 
$f(Z)=Z^n+a_1Z^{n-1}+\ldots+a_{n-1}Z+a_n\in K[Z].$
If $K_1$ is a decomposition field of $f(Z)$, then 
$$f(Z)=(Z-\theta_1)\cdots (Z-\theta_n)\in K_1[Z],$$
and the coefficients $a_i (\in K)$ can  be expressed in terms of the elements $\{ \theta_1, \dots, \theta_n \}$ in $K_1$. In fact each coefficient $a_i$ is obtained from a symmetric function in $n$ variables, evaluated in  $(\theta_1, \dots, \theta_n )$. So, at least formally, and although the statement is not precise, one can set
\begin{equation}\label{ee1}
\Z[a_1, \dots , a_n]=\Z[\theta_1, \dots , \theta_n]^{S_n},
\end{equation}
where $S_n$ denotes the permutation group (see \ref{par01}). 
Consider a change of variable in $K[Z]$ obtained by fixing an element $\lambda \in K$ and setting $Z_1=Z-\lambda$. So 
$f(Z)=g(Z_1)$ in $K[Z]=K[Z_1]$. Say 
$$f(Z)=Z^n+a_1Z^{n-1}+\ldots+a_{n-1}Z+a_n=Z_1^n+b_1Z_1^{n-1}+\ldots+b_{n-1}Z_1+b_n=g(Z_1).$$
Fix the decomposition field $K_1$, as before, then $K_1[Z]=K_1[Z_1]$, and 

$$f(Z)=(Z-\theta_1)\cdots (Z-\theta_n)=(Z_1-\beta_1)\cdots (Z_1-\beta_n)=g(Z_1)$$
where $\beta_i=\theta_i+\lambda$, $i=1, \dots, n$.

Each coefficients $b_i$ is obtained by evaluation of a symmetric functions in $(\beta_1, \dots, \beta_n)$.
Our goal is to obtain polynomial expressions on the coefficient, say $H(V_1, \dots, V_n)\in \Z[V_1, \dots, V_n]$, so that 
\begin{equation}\label{ledi}
H(a_1, \dots, a_n)=H(b_1, \dots, b_n)
\end{equation}
every time when $g(Z_1)$ is $f(Z)$ expressed in a variable of the form $Z_1=Z-\lambda$, for some choice of $\lambda\in K$. A first observation is that for any such change of variable we get 
$$ \theta_i-\theta_j=\beta_i-\beta_j, \ 1\leq i, j\leq n.$$
Note that 
$\Z[\theta_i-\theta_j]_{1\leq i,j\leq
n}
\subset \Z[\theta_1, \dots , \theta_n]$
in particular, (\ref{ee1}) says that 
\begin{equation}\label{pri}
\Z[\theta_i-\theta_j]_{1\leq i,j\leq
n}
^{S_n}\subset \Z[a_1, \dots , a_n].
\end{equation}
Therefore, any element in the left hand side provides a polynomial expression in the coefficients, say 
$H(a_1, \dots, a_n)$, so that 
$H(a_1, \dots, a_n)=H(b_1, \dots, b_n)$
if $g(Z_1)=f(Z)$ for a change of variable.

\end{parrafo}
\begin{parrafo}\label{par01} Fix a monic  polynomial, say 
$f(Z)=Z^n+a_1Z^{n-1}+\ldots+a_1Z+a_0\in S[Z]$, over a smooth $k$-algebra $S$. 
 Assume here that $S$ is a domain, with quotient field $K$. As $S$ is smooth it is a normal ring. Considered $B=S[Z]/\langle f(Z)\rangle$ and the finite morphism
$\delta:\Spec (B)\to \Spec (S).$  It is natural to expect that there be significant information concerning this morphism, or say of $S\subset B$, which is encoded in the coefficients of $f(Z)$. Such is the case with the discriminant. 
On the other hand, if we let $Z_1=Z-\lambda$ for some $\lambda\in S$, then  $f(Z)=g(Z_1)\in S[Z]=S[Z_1]$ for some 
$g(Z_1)=Z_1^n+b_1Z_1^{n-1}+\ldots+b_{n-1}Z_1+b_n$, and
$B=S[Z]/\langle f(Z)\rangle=S[Z_1]/\langle g(Z_1)\rangle$.

In particular, if there is information of $\delta:\Spec (B)\to \Spec (S)$ in the coefficients of the polynomial it is reasonable to expect that it will not distinguish coefficients of $f(Z)$ from those of $g(Z_1)$.

To clarify these claims we bring the problem to a {\em universal context}. Fix a field $k$ and consider the polynomial ring in $n$ variables
$k[Y_1,\ldots,Y_n]$. The {\em universal polynomial} of degree $n$, is
$$F_n(Z)=(Z-Y_1)\cdots(Z-Y_n)=Z^n-s_{n,1}Z^{n-1}+\ldots+(-1)^ns_{n,n}\in
k[Y_1,\ldots,Y_n,Z],$$ where for $i=1,\ldots,n$, $s_{n,i}\in
k[Y_1,\ldots,Y_n]$ is the $i$-th symmetric polynomial.
The diagram
\begin{equation}\label{uncial}\xymatrix{
\operatorname{Spec}(k[s_{n,1}, \ldots, s_{n,n}][Z]/\langle F_n(Z)\rangle)  \ar@{^(->}[r] \ar@{^(->}[rd]_{\overline{\alpha}} & \operatorname{Spec}(k[s_{n,1}, \ldots, s_{n,n}][Z]) \ar[d]_{\alpha} \\
& \operatorname{Spec}(k[s_{n,1}, \ldots, s_{n,n}])
}
\end{equation}
illustrates the universal situation. In fact  $\delta:\Spec (B)\to \Spec (S)$ 
 is obtained from the specialization:
\begin{equation}
\label{especial}
\begin{array}{rrcl}
\Theta: & k[s_{n,1},\ldots,s_{n,n}] & \longrightarrow & S \\
 & (-1)^is_{n,i} & \to & a_i.
 \end{array}
 \end{equation}
In other words, there is a commutative diagram
\begin{equation}\label{di5w} \xymatrix{
 \mbox{Spec} (k[s_{n,1}, \ldots,s_{n,n}][Z]/\langle F_n(Z)\rangle) \ar[d]_{\overline{\alpha}}& & \Spec(B)\ar[d]_{\delta} \ar[ll]\\
 \mbox{Spec} (k[s_{n,1}, \ldots,s_{n,n}])&   & \ar[ll]\Spec(S)\\
 } 
\end{equation}
which, in addition, is a fiber product. Here 
\begin{equation}\label{eqdv}
k[Y_1,\ldots,Y_n]^{S_n}=k[s_{n,1},\ldots,s_{n,n}]
\end{equation}
is a polynomial ring, in particular it is smooth over $k$, and 
$\Theta: k[s_{n,1},\ldots,s_{n,n}]  \to  S $ is a morphism of $k$-algebras. Consider the subring $k[Y_i-Y_j]_{1\leq i,j\leq
n}\subset k[Y_1, \dots , Y_n]$
and note that the permutation group $S_n$ also acts on this subring. So there is an inclusion 
\begin{equation}\label{lidi}k[Y_i-Y_j]_{1\leq i,j\leq
n}^{S_n}\subset k[Y_1, \dots , Y_n]^{S_n}=k[s_{n,1},\ldots,s_{n,n}].
\end{equation}

As $S_n$ is a finite group the algebra in the left hand side is finitely generated. Set
\begin{equation} \label{unrs}
 k[G_{m_1},\ldots,G_{m_r}] :=k[Y_i-Y_j]_{1\leq i,j\leq
n}^{S_n}, 
\end{equation}
and, since $S_n$ acts linearly in $k[Y_1, \dots ,Y_n]$ (preserving the degree of this graded ring), we can take each generator
$G_{m_i}$ as an homogeneous polynomial in $k[Y_1, \dots, Y_n]$.
Let 
\begin{equation}\label{grdh}m_i=\mbox{degree }G_{m_i}
\end{equation} 
where $k[Y_1, \dots ,Y_n]$ is graded in the usual way. The inclusion (\ref{lidi}) yields an expression
\begin{equation}\label{eqdg}
G_{m_i}=G_{m_i}(s_{n,1},\ldots,s_{n,n})
\end{equation}
where $G_{m_i}(s_{n,1},\ldots,s_{n,n})$ is weighted homogeneous of degree $m_i$ in $k[s_{n,1},\ldots,s_{n,n}](\subset k[Y_1, \dots ,Y_n])$. 

This latter assertion says that there is a polynomial, say $G_{m_i}(V_1,\ldots,V_n)$, which is homogeneous of degree $m_i$ in $\Z[V_1, \dots, V_n]$, when this ring is graded so that each $V_i$ is given weight $i$. Here $G_{m_i}(s_{n,1},\ldots,s_{n,n})$ is obtained by setting $V_i=s_{n,i}$, $i=1, \dots, n$.

The morphism 
$\Theta: k[s_{n,1},\ldots,s_{n,n}]  \to  S $ maps 
$G_{m_i}(s_{n,1},\ldots,s_{n,n})$ to the element 
$G_{m_i}(a_1,\ldots,a_n)\in S$. Fix $\lambda\in S$ and set $S[Z]=S[Z_1]$ where $Z_1=Z-\lambda$. Let $$f(Z)=Z^n+a_1Z^{n-1}+\ldots+a_{n-1}Z+a_n=g(Z_1)=Z_1^n+b_1Z_1^{n-1}+\ldots+b_{n-1}Z_1+b_n.$$

Note finally that if $K$ denotes the quotient field of the domain $S$, and if 
$K_1$ is decomposition field of $f(Z)\in S[Z]\subset K[Z],$
then 
\begin{equation}\label{lapro}G_{m_i}(a_1,\ldots,a_n)=G_{m_i}(b_1,\ldots,b_n) \mbox{ in } S  \ (\mbox{see } ( \ref{ledi})).
\end{equation}
\begin{remark}\label{ark} Let $k[F_1, \dots, F_s]$ be a graded ring generated by homogeneous elements $F_i$, and set 
$m_i=\deg(F_i)$, $i=1, \dots, s$. Let $(R,M)$ be a local regular ring and a $k$-algebra, and let $\Theta: k[F_1, \dots, F_s] \to R$ be an homomorphisms of $k$-algebras. 
If  $ \Theta(F_i)$ has order $\geq m_i$, for $i=1, \dots , s$, then, for any homogeneous element $G\in k[F_1, \dots, F_s] $,
$ \Theta(G)$ has order $\geq d$ at $R$, where $d$ denotes the degree of $G$.
\end{remark}

\end{parrafo}

\begin{theorem}\cite[Theorem 1.16]{hpositive}
\label{eliRM} Let $k$ be a field of characteristic zero, and let $S$ be a smooth $k$-algebra. Fix
$f(Z)=Z^n+a_1Z^{n-1}+\ldots+a_{n-1}Z+a_n\in S[Z]$, and
\begin{equation}\label{casoesp1}\xymatrix{
\operatorname{Spec}(S[Z]/\langle f(Z)\rangle)  \ar@{^(->}[r] \ar@{^(->}[rd]_{\delta} & \operatorname{Spec}(S[Z]) \ar[d]_{\beta} \\
& \operatorname{Spec}(S).
}
\end{equation}

Let $F_n$ denote
the set of $n$-fold points of $\{f(Z)=0\}\subset
\Spec(S[Z])$.  Consider the  morphism obtained by specialization, say $ \Theta: k[s_{n,1},\ldots,s_{n,n}] \longrightarrow S$, 
$s_{n,i}  \to  (-1)^ia_i$. Then:
\begin{equation}\label{incSing}
\delta(F_n)= \bigcap_{1\leq j\leq r}\{x\in \Spec(S):
 \nu_x (G_{m_j}(a_1,\ldots,a_n)) 
\geq m_j\}, \end{equation} 
for $G_{m_j}$ as in (\ref{eqdg}) and $m_j$ as in (\ref{grdh}).
\end{theorem}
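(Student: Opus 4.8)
The plan is to reduce the statement to a pointwise comparison of orders, and then to exploit the universal situation set up in \ref{par01}. Fix a point $x\in\Spec(S)$; I want to show that $x\in\delta(F_n)$ if and only if $\nu_x(G_{m_j}(a_1,\ldots,a_n))\geq m_j$ for all $j=1,\ldots,r$. Localizing (and, since everything in sight commutes with étale base change, passing to the henselization or completion of $\calo_{S,x}$ when convenient), I may assume $S$ is local with maximal ideal $M=M_x$. The key translation is: a point $\bar x$ of $\{f(Z)=0\}$ lying over $x$ is an $n$-fold point exactly when, after the change of variable $Z_1=Z-\lambda$ centering $\bar x$ (with $\lambda$ a lift of the residue of $\bar x$), the resulting polynomial $g(Z_1)=Z_1^n+b_1Z_1^{n-1}+\cdots+b_n$ has all coefficients $b_i\in M$ with $\nu_x(b_i)\geq i$ — this is just the statement that $f$ has order $n$ at $\bar x$ in the regular local ring $\calo_{\Spec(S[Z]),\bar x}$, because $Z_1$ and a regular system of parameters of $\calo_{S,x}$ form a regular system of parameters upstairs.

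\textbf{The ``only if'' direction.} Suppose $x\in\delta(F_n)$, witnessed by such a $\lambda$. Then $\nu_x(b_i)\geq i$ for all $i$, which is precisely the hypothesis of Remark \ref{ark} applied to the graded ring $k[s_{n,1},\ldots,s_{n,n}]$, the local regular $k$-algebra $R=\calo_{S,x}$, and the specialization sending $s_{n,i}\mapsto(-1)^ib_i$: every homogeneous $G$ of degree $d$ has $\Theta(G)$ of order $\geq d$. Applying this to $G=G_{m_j}$ gives $\nu_x(G_{m_j}(b_1,\ldots,b_n))\geq m_j$. Now invoke the crucial invariance \eqref{lapro}: $G_{m_j}(a_1,\ldots,a_n)=G_{m_j}(b_1,\ldots,b_n)$ in $S$, because the $G_{m_j}$ are built from the subring $k[Y_i-Y_j]^{S_n}$ and the differences $\theta_i-\theta_j$ are unchanged by a translation of the variable. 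Hence $\nu_x(G_{m_j}(a_1,\ldots,a_n))\geq m_j$ for all $j$, which is the right-hand side of \eqref{incSing}.

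\textbf{The ``if'' direction.} This is the substantive half and where I expect the real work. Assume $\nu_x(G_{m_j}(a_1,\ldots,a_n))\geq m_j$ for all $j$; I must produce $\lambda$ making all $\nu_x(b_i)\geq i$. Pass to the residue field $\kappa(x)$ (perfect is not needed in characteristic zero, but note that here $\overline{\kappa(x)}$ works fine) and factor $\bar f(Z)=\prod(Z-\bar\theta_i)$ over the algebraic closure. The vanishing conditions on the $G_{m_j}$ say, by definition \eqref{unrs} of the $G_{m_j}$ as generators of $k[Y_i-Y_j]^{S_n}$ and by the homogeneity bookkeeping, that every symmetric function of the $\theta_i-\theta_j$ has positive order; since these invariants cut out (set-theoretically, over a field of characteristic zero) the locus where all $\bar\theta_i$ coincide, one concludes $\bar\theta_1=\cdots=\bar\theta_n=:\bar\lambda$ in $\overline{\kappa(x)}$, and in fact $\bar\lambda\in\kappa(x)$ since it equals $-\bar a_1/n$. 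Lift to $\lambda\in S$ and change variables; the coefficients $b_i$ of $g(Z_1)$ are the elementary symmetric functions of $\theta_i-\lambda$, all of which reduce to $0$ mod $M$, so $b_i\in M$, i.e. $\nu_x(b_i)\geq 1$. To upgrade $\geq 1$ to $\geq i$ one iterates: the order-$\geq m_j$ hypotheses, re-expressed after the translation via \eqref{lapro}, feed into a Newton-polygon / weighted-order argument (equivalently, a clean induction on the order, using that the $G_{m_j}$ generate enough of the invariant ring to detect, degree by degree, the condition that $g$ is ``concentrated'' in the maximal ideal with the prescribed weights) showing $\nu_x(b_i)\geq i$. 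The cleanest route is: by \eqref{pri}, $k[Y_i-Y_j]^{S_n}$ sits inside $k[a_1,\ldots,a_n]$, and the $G_{m_j}$ generate a subring over which $k[s_{n,1},\ldots,s_{n,n}]$ is integral; combining the bound $\nu_x(G_{m_j})\geq m_j$ with this integral dependence and the weighted grading forces $\nu_x(s_{n,i})=\nu_x(\pm b_i)\geq i$.

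\textbf{Main obstacle.} The delicate point is the last step of the ``if'' direction — passing from ``all $G_{m_j}$ have the required order'' to ``all $b_i$ have order $\geq i$.'' This is not formal: it relies on the fact that $k[s_{n,1},\ldots,s_{n,n}]$ is finite (integral) over $k[G_{m_1},\ldots,G_{m_r}]$ after the normalization that kills $a_1$ (fixing the ``center of mass''), together with compatibility of the weighted gradings, and it is exactly here that characteristic zero enters (one divides by $n$, and one uses that the ring of invariants is generated in a controlled way). I would isolate this as a lemma about the pair of graded rings $k[s_{n,i}]\supset k[G_{m_j}]$ and their associated integral extension of weighted-order filtrations, and prove \eqref{incSing} by feeding that lemma into the pointwise dichotomy above. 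The reference \cite{hpositive} presumably organizes this via the elimination algebra $\Gr_n(\G)$ and its resolution-theoretic properties, but the skeleton above — pointwise reduction, Remark \ref{ark} for one inclusion, invariance \eqref{lapro} plus integral dependence for the other — is the proof I would write.
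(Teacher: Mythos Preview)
Your overall structure --- reduce to a local statement, prove two inclusions using the invariance (\ref{lapro}) and Remark \ref{ark} --- matches the paper, and your ``only if'' direction is correct and essentially the paper's argument.

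The ``if'' direction has a real gap at exactly the place you flag as the main obstacle. Your claim that $k[s_{n,1},\ldots,s_{n,n}]$ is integral over $k[G_{m_1},\ldots,G_{m_r}]$ is false: the extension has transcendence degree one, since $s_{n,1}=Y_1+\cdots+Y_n$ is algebraically independent of every element of $k[Y_i-Y_j]$ (a translation $Y_i\mapsto Y_i+\lambda$ fixes the latter ring pointwise while shifting $s_{n,1}$ by $n\lambda$). And even if integrality held, it would not by itself transfer the weighted order bounds you need. What the paper establishes instead --- and this is the heart of the argument --- is the \emph{equality}
\[
k[s_{n,1},\ldots,s_{n,n}]\;=\;k[G_{m_1},\ldots,G_{m_r}][\,s_{n,1}\,],
\]
obtained in characteristic zero from $k[Y_1,\ldots,Y_n]=(k[Y_i-Y_j]_{i,j})[s_{n,1}]$ by taking $S_n$-invariants ($s_{n,1}$ is already invariant; one needs $1/n\in k$ to write $Y_i=\tfrac{1}{n}s_{n,1}+\tfrac{1}{n}\sum_j(Y_i-Y_j)$). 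This says each $s_{n,i}$ is weighted-homogeneous of degree $i$ in the generators $G_{m_1},\ldots,G_{m_r},s_{n,1}$. The rest is then one line: take $\lambda$ (e.g.\ $\lambda=a_1/n$) so that $\nu_x(b_1)\geq 1$; by (\ref{lapro}) the hypothesis gives $\nu_x(G_{m_j}(b_1,\ldots,b_n))\geq m_j$; now Remark \ref{ark}, applied to the generating set $\{G_{m_1},\ldots,G_{m_r},s_{n,1}\}$, yields $\nu_x(b_i)\geq i$ for every $i$. Your residue-field detour (forcing all roots to coincide modulo $M$) is correct but superfluous --- only $\nu_x(b_1)\geq 1$ is needed, and the upgrade to $\nu_x(b_i)\geq i$ comes from the polynomial-ring structure above, not from any integrality.
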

\proof Recall the description of the universal polynomial in 
\ref{par01} where the coefficients are the generators of $k[Y_1, \dots, Y_n]^{S_n}$ (see (\ref{eqdv})).
If the characteristic is zero (or if the characteristic does not divide $n$), one can check that
$$k[Y_1, \dots , Y_n]=(k[Y_i-Y_j]_{1\leq i,j\leq
n})[s_{n,1}],$$
where $s_{n,1}=Y_1+Y_2+\cdots +Y_n$. As $s_{n,1}$ is an invariant by the action of $S_n$, we conclude that 
$$k[Y_1, \dots , Y_n]^{S_n}=(k[Y_i-Y_j]^{S_n}_{1\leq i,j\leq
n})[s_{n,1}],$$
or say 
$$k[s_{n,1},\ldots,s_{n,n}]=k[G_{m_1},\ldots,G_{m_r}][s_{n,1}].$$
This gives an expression of this algebra by two different collection of homogeneous generators. Therefore each 
$G_{m_1}$ is a weighted homogeneous polynomial in 
$s_{n,1},\ldots,s_{n,n}$, and conversely, each $s_{n,i}$ is a weighted homogeneous in 
$G_{m_1},\ldots,G_{m_r}, s_{n,1}$.
 
Fix $f(Z)=Z^n+a_1Z^{n-1}+\ldots+a_{n-1}Z+a_n\in S[Z]$, or equivalently, fix a morphism $\Theta$ as in (\ref{especial}). 
 Assume that $ \nu_x (G_{m_j}(a_1,\ldots,a_n)) \geq m_j$, for $j=1, \dots, r$, at $x\in \mbox{Spec}(S)$. We claim that $x\in \delta(F_n)$. As the characteristic is zero, there is an element 
$\lambda\in S$ so that setting $Z_1=Z-\lambda$ 
 $$f(Z)=Z^n+a_1Z^{n-1}+\ldots+a_{n-1}Z+a_n=Z_1^n+b_1Z_1^{n-1}+\ldots+b_{n-1}Z_1+b_n,$$
 with $ \nu_x (b_1)\geq 1$. We claim now that
 \begin{equation}\label{eret}  \nu_x (b_i)\geq i, \ i=1, \dots , n,
 \end{equation}
 and hence that $x\in \delta(F_n)$. To this end recall firstly (\ref{lapro}), which ensures, in particular, that $\nu_x (G_{m_j}(b_1,\ldots,b_n)) \geq m_j$ for $j=1, \dots , r$. Finally, the inequalities in (\ref{eret}) follow from \ref{ark}.
 
 Conversely, we claim that if $x\in \delta(F_n)$, then $ \nu_x (G_{m_j}(a_1,\ldots,a_n)) \geq m_j$ for $j=1, \dots, r$. Let 
 $y\in F_n$ be a point that maps to $x$ in $\Spec(S)$. So $y$ in an $n$-fold point of $\Spec(B)$, and Theorem \ref{MultForm} will show that $y$ is the unique point of the fiber, and that the local rings, say $B_y$ and $S_x$, have the same residue fields. 
 Recall that $B=S[Z]/\langle f(Z)\rangle $. In particular the class of $Z$ in the residue field of $B_y$ is also the class of some element $\lambda\in S$, at the residue field of $S_x$. Set 
 $Z_1=Z-\lambda$, and 
 $$f(Z)=Z^n+a_1Z^{n-1}+\ldots+a_{n-1}Z+a_n=Z_1^n+b_1Z_1^{n-1}+\ldots+b_{n-1}Z_1+b_n=g(Z_1).$$
 Let $M_x$ denote the maximal ideal in $S_x$, and let $k'$ denotes the residue field $S_x/M_x$. The uniqueness and rationality of the point $y$ in the fiber shows that class of $g(Z_1)$ in 
 $k'[Z_1]$ is $Z_1^n$. Therefore $g(Z_1)\in \langle  M_x, Z_1\rangle^n$ (as $y$ is an $n$-fold point of $B$), and this occurs if and only if $b_i\in M_x^i$. So again, the argument in \ref{ark} together with (\ref{lapro}) show that 
$  \nu_x (G_{m_j}(a_1,\ldots,a_n)) 
\geq m_j \ , j=1, \dots ,r.$
\endproof
\begin{parrafo} In the previous discussion we have fixed a regular ring $S$, a polynomial $f(Z)=Z^n+a_1Z^{n-1}+\ldots+a_{n-1}Z+a_n
\in S[Z]$, and we have studied equations on the coefficients that are invariant under a change of variables of the form $Z_1=Z-\lambda$ for $\lambda\in S$. This lead us to the elements 
$G_{m_i}=G_{m_i}(s_{n,1},\ldots,s_{n,n})$
in (\ref{eqdg}). There are other cases of interest to be considered:

1) If $u\in S$ is a unit then $S[Z]=S[Z_1]$ where $Z_1=uZ$, is also a change of variables.

2) When there is an element $0\neq v\in S$ so that $ a_i=v^i a'_i, \ i=1, \dots, n.$

Case 1) In this case $Z=\frac{Z_1}{u}$, and
$$f(Z)=Z^n+a_1Z^{n-1}+\ldots+a_{n-1}Z+a_n=
(\frac{Z_1}{u})^n+a_1(\frac{Z_1}{u})^{n-1}+\ldots+a_{n-1}\frac{Z_1}{u}+a_n
\in S[Z]$$
which is not monic in $Z_1$, but the associated polynomial
$$u^nf(Z_1)=Z_1^n+ua_1Z_1^{n-1}+\ldots+u^{n-1}a_{n-1}Z+u^na_n$$
is monic. The weighted homogeneous expression of $G_{m_i}=G_{m_i}(s_{n,1},\ldots,s_{n,n}) $ ensures that 
$$G_{m_i}(ua_1,\ldots,u^na_n)=u^{m_i}G_{m_i}(a_1,\ldots,a_n).$$

In particular, the ideal in the ring $S$ spanned by 
$G_{m_i}(a_1,\ldots,a_n)$ is intrinsic to the polynomial $f(X)$ and independent of {\em any} change of variable in $S[Z]$.

Case 2) In this case set formally $Z_1=\frac{Z}{v}$. 
If $K$ denotes the quotient field of $S$ this is a change of variables in $K[Z]$, and 

$$(\frac{1}{v})^nf(Z)=Z_1^n+a'_1Z_1^{n-1}+\ldots+a'_{n-1}Z_1+a'_n.$$

The same argument used above shows that 
$G_{m_i}(a'_1,\ldots,a'_n)=(\frac{1}{v})^{m_i}G_{m_i}(a_1,\ldots,a_n).$
\begin{corollary}\label{Cor54} Let $k$ be a field of characteristic zero. Let $\delta: \Spec(B)\to \Spec(S)$ be given by a finite extension of rings $S\subset B$ where 
$S$ is a smooth $k$-algebra and $B=S[Z]/\langle f(Z) \rangle$, $f(Z)=Z^n+a_1Z^{n-1}+\ldots+a_{n-1}Z+a_n\in S[Z]$. The 
Rees algebra $$\G(B/S)=S[G_{m_1}(a_1,\ldots,a_n)W^{m_1}, \dots , G_{m_r}(a_1,\ldots,a_n)W^{m_r}]$$ is intrinsic to $B$ (independent of the choice of the variable $Z$), and 
$ \Sing \G(B/S)= \delta(F_n)$
(is the image of the $n$-fold points of $\Spec(B)$).
\end{corollary}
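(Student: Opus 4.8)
The statement has two ingredients: the set-theoretic identity $\Sing\G(B/S)=\delta(F_n)$, and the independence of the graded algebra $\G(B/S)$ from the chosen monic presentation of $B$ over $S$. The plan is to deduce the first directly from Theorem \ref{eliRM}, and to reduce the second to the two elementary changes of variable isolated just before the statement.

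For the identity, I would argue as follows. By construction $\G(B/S)$ is the Rees algebra on $\Spec(S)$ generated by $G_{m_1}(a_1,\ldots,a_n)W^{m_1},\dots,G_{m_r}(a_1,\ldots,a_n)W^{m_r}$, so the description of the singular locus of a Rees algebra in terms of a generating set (see (\ref{442})) gives
$$\Sing\G(B/S)=\bigcap_{1\le j\le r}\{x\in\Spec(S):\nu_x(G_{m_j}(a_1,\ldots,a_n))\ge m_j\}.$$
Theorem \ref{eliRM} identifies exactly this intersection with $\delta(F_n)$, so the two closed sets agree; here $n=\deg f$ is automatically the highest multiplicity on $\Spec(B)=\{f(Z)=0\}$, so $\delta(F_n)$ is the image of the highest-multiplicity locus.

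For the intrinsic character, I would take a second presentation $B=S[Z]/\langle f(Z)\rangle=S[Z_1]/\langle g(Z_1)\rangle$ with $f,g$ monic of degree $n$, and first show that $Z_1=uZ+v$ for some unit $u\in S^{*}$ and some $v\in S$; thus the change of presentation is a composite of a translation $Z\mapsto Z-\lambda$ and a rescaling $Z\mapsto uZ$ by a unit. Granting this, the conclusion is immediate from the preceding discussion: under a translation, (\ref{lapro}) gives $G_{m_j}(a_1,\ldots,a_n)=G_{m_j}(b_1,\ldots,b_n)$ in $S$ for every $j$, so $\G(B/S)$ is literally unchanged; under a rescaling by $u$, the weighted homogeneity of $G_{m_j}$ (Case 1 above) gives $G_{m_j}(ua_1,u^2a_2,\ldots,u^na_n)=u^{m_j}G_{m_j}(a_1,\ldots,a_n)$, and since $u^{m_j}$ is a unit the ideal generated in degree $m_j$ --- hence the graded subalgebra $\G(B/S)\subset S[W]$ --- is again unchanged. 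Combining the two moves shows $\G(B/S)$ depends only on the inclusion $S\subset B$.

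The main obstacle will be this normalization step: showing that a second monic degree-$n$ presentation forces $Z_1$ to be affine-linear in $Z$. The plan there is to use that $B$ is a free $S$-module of rank $n$ (on either power basis), so $B$ is torsion-free over the normal domain $S$ and embeds in $B\otimes_S K$, where $K$ is the fraction field of $S$; in characteristic zero, when $B$ is reduced, $f$ is separable over $K$, so $B\otimes_S K$ is an \'etale $K$-algebra, and the requirement that $Z$ and $Z_1$ each generate this algebra over $K$, together with the fact that $f$ and $g$ are its characteristic polynomial in the respective variable, forces $Z_1$ to be $K$-affine-linear in $Z$; descending along $B\hookrightarrow B\otimes_S K$ then gives $Z_1=uZ+v$ with $u\in S^{*}$ (the coefficient of $Z$, necessarily a unit for $1,Z_1,\ldots,Z_1^{n-1}$ to remain an $S$-basis) and $v\in S$. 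For non-reduced $B$ a small additional argument is needed, but that case does not arise for the local rings of varieties that motivate $\G(B/S)$.
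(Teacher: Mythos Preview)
Your treatment of the identity $\Sing\G(B/S)=\delta(F_n)$ is fine and matches the paper: combine the description (\ref{442}) of the singular locus of a finitely generated Rees algebra with the formula (\ref{incSing}) of Theorem~\ref{eliRM}.

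The intrinsicality argument, however, has a genuine gap. Your key ``normalization step'' claims that any second monic degree-$n$ presentation $B=S[Z_1]/\langle g(Z_1)\rangle$ forces $Z_1=uZ+v$ with $u\in S^{*}$, $v\in S$. This is false. Take $S=k[t]$ and $B=S[Z]/\langle Z^{3}-t\rangle$; let $\theta$ be the class of $Z$. Then $\theta^{2}$ also generates $B$ over $S$ (since $\theta=(\theta^{2})^{2}/t$ in $B_{t}$, and more directly $\theta=\theta\cdot 1$ with $\theta^{3}=t$, so $S[\theta^{2}]\ni\theta^{4}=t\theta$ and hence $\theta\in S[\theta^{2}]_{t}$; one checks $S[\theta^{2}]=S[\theta]$ because $1,\theta^{2},\theta^{4}=t\theta$ is an $S$-basis). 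The minimal polynomial of $Z_{1}=\theta^{2}$ is $g(Z_{1})=Z_{1}^{3}-t^{2}$, monic of degree $3$, yet $Z_{1}=Z^{2}$ is not affine in $Z$. The step ``$Z$ and $Z_{1}$ each generate the \'etale $K$-algebra $B\otimes_{S}K$, hence $Z_{1}$ is $K$-affine-linear in $Z$'' is exactly where the argument breaks: a primitive element of a separable extension can be replaced by many non-linear expressions in itself.

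What the paper actually asserts (and proves in the paragraph immediately preceding the corollary) is the narrower statement that $\G(B/S)$ is invariant under \emph{changes of variable in $S[Z]$}, i.e.\ under $S$-algebra automorphisms of the ambient polynomial ring. Those automorphisms are precisely $Z\mapsto uZ+\lambda$ with $u\in S^{*}$, $\lambda\in S$, so no normalization lemma is needed: invariance under translations is (\ref{lapro}), invariance under unit rescalings is Case~1), and the two together cover all such changes. Your translation/rescaling computations are correct; the error is only in overreaching to arbitrary monic presentations of $B$.
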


We will show later that a closed smooth center $Y$, included in 
$F_n$ (the $n$-fold point of $\Spec(B)$), maps to a smooth center, say $\delta(Y)\subset \Spec(S)$. Moreover, we will show that $Y$ and $\delta(Y)$ are isomorphic. 
Let $\Spec(B) \leftarrow T$ denote the blow up at $Y$, and let 
$\Spec(S)\leftarrow R$ denote the blow up at the regular center $\delta(Y)$. In \ref{Th120} it will be shown that there is a natural commutative diagram 
\begin{equation}\label{drdw} \xymatrix{
 \Spec (B) \ar[d]_{\delta}& & T\ar[d]_{\delta'} \ar[ll]\\
 \Spec (S)&   & \ar[ll]R\\
 } 
\end{equation}
where $\delta':T\to R$ is finite. In addition, there is a suitable affine cover of $R$ and $T$ so that the restriction of 
$\delta'$ is a finite map of the form $\Spec(S'[Z']/\langle  g(Z')\rangle) \to \Spec(S')$, where 
$g(Z')\in S'[Z']$ is a monic polynomial of degree $n$, and a {\em strict transform} of $f(Z)\in S[Z]$ (see (\ref{tres})).
\begin{corollary}\label{Cor55}
Fix the setting as above, where $k$ is of characteristic zero and $\delta(Y)\subset \Sing(\G(B/S))$. Let $\G(B/S)_1$ be the transform of $\G(B/S)$ to $R$. There is a cover of $R$ by affine schemes, so that if $U=\Spec(S')$ is an affine chart, then $(\delta')^{-1}(U)=\Spec(B')$, $B'=S'[Z']/\langle g(Z')\rangle $, with
$g(Z')\in S'[Z']$ monic of degree $n$, and the restriction of $\G(B/S)_1$ to $U$ is the algebra $\G(B'/S')$.
\end{corollary}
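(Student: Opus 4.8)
The plan is to reduce the statement to a short, formal computation with the weighted transforms of the generators of $\G(B/S)$, using two facts that are already available: that by Corollary~\ref{Cor54} the elimination algebra of a monic polynomial over a smooth base depends only on the associated finite extension (so that $\G(B'/S')$ is well defined once $B'$ is), and that each $G_{m_j}$ is weighted homogeneous, the $i$-th coefficient of the polynomial carrying weight $i$ (see (\ref{eqdg}), (\ref{grdh})). Throughout, write $G_{m_j}(a)$ for $G_{m_j}(a_1,\dots,a_n)$ and $G_{m_j}(a')$ for $G_{m_j}(a'_1,\dots,a'_n)$.

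First I would put $f$ in a normal form along the center. Since $k$ has characteristic zero, the substitution $Z\mapsto Z+\tfrac{1}{n}a_1$ is a change of variable over $S$ which, by Corollary~\ref{Cor54}, leaves $\G(B/S)$ unchanged; so I may assume $a_1=0$. The hypothesis $\delta(Y)\subset\Sing\G(B/S)$ means, via (\ref{442}), that $\nu_x\bigl(G_{m_j}(a)\bigr)\ge m_j$ along $\delta(Y)$ for $j=1,\dots,r$. Combining this with $\nu_x(a_1)\ge1$ and the identity $k[s_{n,1},\dots,s_{n,n}]=k[G_{m_1},\dots,G_{m_r}][s_{n,1}]$ proved in Theorem~\ref{eliRM} (so that each $s_{n,i}$ is weighted homogeneous of degree $i$ in $G_{m_1},\dots,G_{m_r},s_{n,1}$), Remark~\ref{ark} gives $\nu_x(a_i)\ge i$ along $\delta(Y)$ for all $i$. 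Since $\delta(Y)$ is regular, this yields $a_i\in I(\delta(Y))^i$ in $S$.

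Next I would pass to charts. Cover $R$ by the standard affine charts of the blow up of the smooth center $\delta(Y)$, and fix one, $U=\Spec(S')$, on which $I(\delta(Y))\calo_R$ restricts to a principal ideal $\langle y\rangle$, with $y$ the local equation of the exceptional divisor. By the previous step $a_i=y^i a'_i$ with $a'_i\in S'$; writing $Z'=Z/y$ one gets $y^{-n}f(yZ')=Z'^{\,n}+a'_1Z'^{\,n-1}+\dots+a'_n=:g(Z')$, which by the description of the strict transform recalled before the statement (see \ref{Th120} and (\ref{tres})) is precisely the strict transform $g$, with $(\delta')^{-1}(U)=\Spec(B')$ and $B'=S'[Z']/\langle g(Z')\rangle$. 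Now $\delta(Y)\subset\Sing\G(B/S)$ is exactly the permissibility of $\delta(Y)$ for $\G(B/S)$, so $\G(B/S)_1$ is its transform in the sense of \ref{weaktransforms}; since $\G(B/S)=S[G_{m_1}(a)W^{m_1},\dots,G_{m_r}(a)W^{m_r}]$, Proposition~\ref{localt} gives $\G(B/S)_1|_U=S'[G_{m_1}(a)_1W^{m_1},\dots,G_{m_r}(a)_1W^{m_r}]$, where the weighted transform is $G_{m_j}(a)_1=G_{m_j}(a)/y^{m_j}$. Finally, weighted homogeneity of degree $m_j$ (weight $i$ on the $i$-th coefficient) gives $G_{m_j}(a_1,\dots,a_n)=G_{m_j}(ya'_1,\dots,y^n a'_n)=y^{m_j}G_{m_j}(a'_1,\dots,a'_n)$, so $G_{m_j}(a)_1=G_{m_j}(a')$ and hence $\G(B/S)_1|_U=S'[G_{m_1}(a')W^{m_1},\dots,G_{m_r}(a')W^{m_r}]=\G(B'/S')$, which is the asserted equality.

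I expect the main obstacle to be the first step together with the identification of $g$ as the strict transform: one must be sure that the global change of variable over $S$ producing the divisibilities $a_i\in I(\delta(Y))^i$ is compatible with the strict transform construction of \ref{Th120}, and this is exactly where characteristic zero (for the averaging step and for the identity $k[s_{n,\bullet}]=k[G_\bullet][s_{n,1}]$) and the regularity of $\delta(Y)$ enter. Once those inputs from the preceding sections are granted, everything else is the brief computation above, whose only real content is the weighted homogeneity of the $G_{m_j}$.
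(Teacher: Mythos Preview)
Your argument is correct and follows essentially the same line the paper intends: the corollary is left without a separate proof and is meant to follow from the Case~2) computation just before Corollary~\ref{Cor54} (if $a_i=v^ia'_i$ then $G_{m_j}(a')=v^{-m_j}G_{m_j}(a)$) together with the forward-referenced description of the strict transform in \ref{Th120} and (\ref{tres}). Your proof makes this implicit argument explicit, and the weighted-homogeneity step at the end is exactly the paper's Case~2).

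The one point where you proceed slightly differently is in establishing $a_i\in I(\delta(Y))^i$. The paper, via Lemma~\ref{C116} and Theorem~\ref{Th120}, obtains these divisibilities by a \emph{local} change of variable $Z\mapsto Z-\lambda$ with $\lambda\in S_\p$ depending on the point, whereas you use a single \emph{global} Tschirnhaus substitution $Z\mapsto Z+\tfrac{1}{n}a_1$ and then invoke the identity $k[s_{n,\bullet}]=k[G_\bullet][s_{n,1}]$ from Theorem~\ref{eliRM} together with Remark~\ref{ark}. Both routes need characteristic zero, and yours is a clean shortcut in the hypersurface setting of this corollary; it also makes transparent why the strict transform formula (\ref{tres}) applies uniformly on each chart.
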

Note, in particular, that in characteristic zero the image of the $n$-fold points of $T$ is the closed set $\Sing(\G(B/S)_1)$, and the same holds after any sequence of blow ups obtained over $\Spec(B)$ as above.

\end{parrafo}

\end{section}

\begin{section}{Multiplicity. Some algebraic preliminaries.}

There are several algebraic preliminaries to be mentioned  here, where we review some very classical notions of commutative algebra used to study the behavior of the multiplicity, such as  finite extension of rings, integral closure of ideals, and blow ups. 

The theorem of Rees and the theorem of Zariski formulated in 
\ref{Rees} and \ref{MultForm} respectively, will be the basic tools in our discussion. 

We then turn to our algebraic reformulation of the geometric notion of branched cover. This is done in \ref{par0} and \ref{par1}, and in \ref{pr46}, we discuss the relevance of this notion for the study of the multiplicity on an equidimensional  scheme of finite type over a perfect field.

\color{black}


\begin{parrafo}\label{par0}

If $(R,M)$ is a local ring, and if $J$ is primary for the maximal ideal, then  
$e_R(J)$ will denote the multiplicity of the ideal.
A prime ideal $q$ in a ring $B$ is said to be an {\em $n$-fold point of $\Spec(B)$}, or an $n$-fold prime, when
$e_{B_q}(qB_q)=n.$

Let $(R,M)$ be a local ring of dimension $d$, and let $J$ be an $M$-primary ideal. There is a polynomial of degree $d$ is attached to these data, called the Hilbert polynomial, so that the length $l(R/J^n)$ is given by the evaluation on $n$,  for all $n$ sufficiently big. Moreover, the leading coefficient is
$\frac{e_R(J)}{d!}$.

Given a finitely generated $R$-module $N$, then $l(N/J^nN)$ is also given by a polynomial of degree $d'(\leq d)$ for $n$ sufficiently big, and where $d'$ is the dimension of the support of $N$. The leading coefficient can be expressed as $\frac{e_N(J)}{d'!}$, for a positive integer $e_N(J)$ called the multiplicity of $N$ relative to the $M$-primary ideal $J$. 

We fix $d=\dim R$ and define the $d$-multiplicity of a module, say 
$e^{(d)}_N(J)$, to be zero 
if $d'<d$, and to be $e_N(J)$ when $d'=d$.
An important property of the $d$-multiplicity is its additive behavior: given a short exact sequence of $R$-modules
$0 \to N_1 \to N_2 \to N_3 \to 0,$
then 
\begin{equation}\label{additive}
e^{(d)}_{N_2}(J)=e^{(d)}_{N_1}(J)+e^{(d)}_{N_3}(J).
\end{equation}
Namely, the coefficient in degree $d$ of the polynomial corresponding to $N_2$ is the sum of those of $N_1$ and $N_3$ (see \cite{Mat} Prop 12. D, p 74).

The following example illustrates  an application of this property concerning the local rings of multiplicity one: local regular rings have multiplicity one but the converse does not hold. Consider the inclusion $X\langle Y,Z\rangle\subset \langle X\rangle$ in $k[X,Y,Z]$, which induces a surjection of the quotient rings, say $B_1\to B_2$, and hence an exact sequence $J\to B_1 \to B_2\to 0$. 
Here $B_1$ and $B_2$ are two dimensional rings, both corresponding to sub-schemes in $\mathbb A^3$ , $B_2$ has multiplicity one at the origin, and $J$ is supported 
in a closed set of smaller dimension. Using  (\ref{additive}) one can check that 
$B_1$ has multiplicity one at the origin, but it is not regular at this point.

 Here $B_1$ is reduced but not equidimensional. We claim that this property does hold when $B$ is a finitely generated algebra over a field $k$, and $B$ is reduced and equidimensional. Namely, if the localization at a prime ideal, say $B_q$, has multiplicity one, then $B_q$ is regular.
In fact, these conditions ensure that $B_q$ is reduced, equidimensional, and also excellent. Therefore the completion $\hat{B_q}$ is again reduced and equidimensional ((\cite{EGAIV}, (7.8.3), (vii) and (x)). Finally, if this holds and $\hat{B_q}$ has multiplicity one, a theorem of Nagata states that $B_q$ is regular (\cite{Nagata}, Theorem 40.6, p.157).
\color{black}  
%
%

A local ring $(R,M)$ is said to be formally equidimensional (quasi-unmixed in Nagata's terminology) if 
$\dim(\hat{R}/p)=\dim(\hat{R})$
at each minimal prime ideal $p$ in the completion $\hat{R}$.

\begin{theorem}\label{Rees} (\cite{Rees}) If $I\subset J$ are primary ideals for the maximal ideal in a formally equidimensional local ring $(R,M)$, then both ideals have the same integral closure if and only if $e_R(I)=e_R(J)$.
\end{theorem}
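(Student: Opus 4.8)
The plan is to treat the two implications separately; only ``$e_R(I)=e_R(J)\Rightarrow\overline I=\overline J$'' uses that $R$ is formally equidimensional, and I may assume $d=\dim R\geq 1$. For the implication ``$\overline I=\overline J\Rightarrow e_R(I)=e_R(J)$'': from $I\subseteq J$ one gets $I^n\subseteq J^n$, hence $\ell(R/I^n)\geq\ell(R/J^n)$ for all $n$, and comparing the leading terms of the Hilbert polynomials gives $e_R(I)\geq e_R(J)$; conversely $J\subseteq\overline J=\overline I$ means $J$ is integral over $I$, so there is $k$ with $J^{k+1}=IJ^{k}$, whence $J^{k+m}=I^{m}J^{k}\subseteq I^{m}$ for all $m$, so $\ell(R/J^{k+m})\geq\ell(R/I^{m})$, and dividing by $m^{d}/d!$ and letting $m\to\infty$ yields $e_R(J)\geq e_R(I)$. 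This half needs nothing beyond Noetherian local.

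For the substantive implication, the first step is a reduction to the case of a complete local \emph{domain}. Passing to $\widehat R$ preserves formal equidimensionality, leaves $e(I\widehat R)=e(I)$ and $e(J\widehat R)=e(J)$ unchanged, and satisfies $\overline{I\widehat R}=\overline I\,\widehat R$, so an equality $\overline{I\widehat R}=\overline{J\widehat R}$ descends to $R$. For each minimal prime $\mathfrak p$ of $\widehat R$ one has $\dim\widehat R/\mathfrak p=\dim\widehat R=d$ by formal equidimensionality, so the associativity formula for multiplicity gives $e_{\widehat R}(I\widehat R)=\sum_{\mathfrak p}\ell_{\widehat R_{\mathfrak p}}(\widehat R_{\mathfrak p})\,e_{\widehat R/\mathfrak p}(I(\widehat R/\mathfrak p))$ and likewise for $J$; since $I\subseteq J$ forces $e_{\widehat R/\mathfrak p}(I(\widehat R/\mathfrak p))\geq e_{\widehat R/\mathfrak p}(J(\widehat R/\mathfrak p))$ for every $\mathfrak p$, equality of the two sums forces termwise equality. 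As membership in $\overline{I\widehat R}$ is detected modulo the minimal primes (an element lies in $\overline{I\widehat R}$ iff its image lies in $\overline{I(\widehat R/\mathfrak p)}$ for every minimal $\mathfrak p$), it suffices to prove the statement for each complete local domain $\widehat R/\mathfrak p$.

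So assume $R=D$ is a complete local domain, $I\subseteq J$ are $M$-primary with $e(I)=e(J)$, and the goal is $J\subseteq\overline I$; I would induct on $d=\dim D\geq 1$. For $d=1$ the normalization $\overline D$ is a complete DVR with valuation $v$, $\overline I=I\overline D\cap D=\{x:v(x)\geq v(I)\}$, and $e_D(I)$ is a fixed positive multiple of $v(I)$, so $e(I)=e(J)$ gives $v(I)=v(J)$ and hence $\overline I=\overline J$. For $d\geq 2$, after harmlessly enlarging the residue field choose a general $x\in M$ that is superficial for both $I$ and $J$; then $D/xD$ is complete local and formally equidimensional of dimension $d-1$, $e(I(D/xD))=e(I)$ and $e(J(D/xD))=e(J)$, and generality of $x$ gives $\overline I\,(D/xD)=\overline{I(D/xD)}$ (similarly for $J$). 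Applying the reduction of the previous paragraph to $D/xD$ together with the inductive hypothesis yields $\overline{I(D/xD)}=\overline{J(D/xD)}$, i.e. $\overline I+xD=\overline J+xD$, and one lifts this to $\overline I=\overline J$ using $xD\cap\overline J=x\overline J$ (a consequence of $x$ being superficial) and Nakayama.

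The main obstacle is the inductive step in the domain case: making precise and correctly combining the three facts that a sufficiently general parameter is superficial (so that multiplicity is unaffected by passing to $D/xD$), that integral closure of an $M$-primary ideal commutes with such a general hyperplane section, and that the resulting equality modulo $x$ lifts back to an equality of integral closures in $D$. Each ingredient is classical and essentially due to Rees, but assembling them is the heart of the matter; by contrast the base case $d=1$, the completion step, and the descent to minimal primes are routine, the latter relying squarely on formal equidimensionality to keep all minimal primes of the completion of top dimension.
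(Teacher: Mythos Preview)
The paper does not give its own proof of this theorem: it is stated with a citation to Rees' original paper \cite{Rees} and used as a black box throughout Section~4 and beyond. There is therefore no proof in the paper to compare your proposal against.

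That said, a few remarks on your argument. The easy direction and the reduction to a complete local domain are correct and standard; formal equidimensionality is used exactly where it should be, to ensure every minimal prime of $\widehat R$ has full dimension so that the associativity formula has no dropped terms. The inductive step, however, has a real gap. The assertion that for a sufficiently general $x$ one has $\overline{I}\,(D/xD)=\overline{I(D/xD)}$ is not a consequence of $x$ being superficial; this ``specialization of integral closure'' is itself a nontrivial theorem (of Itoh/Huneke type) whose known proofs typically rely on Rees' theorem or on equally deep input. Using it here risks circularity. Likewise the lifting step $\overline I+xD=\overline J+xD\Rightarrow \overline I=\overline J$ needs more than Nakayama and $xD\cap\overline J=x\overline J$: one must know that $\overline I$ and $\overline J$ are comparable or that the quotient $\overline J/\overline I$ is a module to which Nakayama applies, and this is not automatic from what you have written.

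Rees' own proof, and most textbook treatments (see for instance \cite{IrenaHuneke}), avoid induction on dimension altogether. One shows that in a formally equidimensional local ring the integral closure $\overline I$ is the intersection $\bigcap_v IV\cap R$ over the finitely many Rees valuations $v$ of $I$, and then proves directly that $e_R(I)=e_R(J)$ with $I\subset J$ forces $IV=JV$ for each such $V$ (using that the multiplicity can be expressed in terms of these valuations). This bypasses the delicate hyperplane-section step entirely.
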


\begin{parrafo}\label{redbp}
An ideal $I$, included in $J$, is said to be a {\em reduction} of $J$, if both  have the same integral closure in $R$. A criterion due to Lipman says that $I\subset J$ is a reduction if and only if $IJ^n=J^{n+1}$ for a suitable integer $n$
(\cite{Lipman1}, p 792, Lemma (1.1)). This, in turn, has interesting consequences when studying blow ups: Fix an ideal $J$ in a ring $R$, let 
$R_J:= R \oplus J \oplus J^2\oplus\dots$  denote the Rees algebra. The blow-up of $R$ at $J$ is a projective morphism, say
$Spec(R) \leftarrow X$
where $X=Proj(R_J)$. 

 When  $I\subset J$ is a reduction , then $I\calo_X=J\calo_X$, and moreover, if
$I=\langle f_1, \dots , f_r \rangle $, and 
$J=\langle f_1, \dots , f_r, 
f_{r+1}, \dots , f_s\rangle $, then $X$ can be covered by only $r$  charts, say $Spec(B_i)$, where each
$$B_i=B\left[ \frac{f_1}{f_i}, \dots , \frac{f_s}{f_i}\right] , i=1, \dots , r,$$
is a $B$-algebra included in the localization $B_{f_i}$.
In fact, the criterion shows that the natural inclusion of Rees rings $R_I\subset R_J$ is a finite extension, which induces a finite morphism of $\Spec(R)$-schemes, say $X \to Y$, where $Y$ denotes the blow up at $I$.
The existence of this morphism, together 
 of the condition  $IJ^n=J^{n+1}$, ensure that every point in $X$ is in one of the $r$ affine charts expressed above.
\end{parrafo}
The following theorem of Zariski, combined with Theorem \ref{Rees}, will be used along this paper.
%
%
%
%
%
%

\begin{theorem}\label{MultForm} (\cite{ZS}, Theorem 24 p. 297) Let 
$(A,M)$ be a local domain, and let $B$ be a finite extension of $A$. Let $K$ denote the quotient field of $A$, and $L=K\otimes_AB$.

 Let $Q_1, \ldots, Q_r$ denote the maximal ideals of the semi-local ring $B$, and assume that $\dim B_{Q_i}=\dim A$, $ i=1, \dots ,r$. Then
$$e_A(M)[L:K] =\sum_{1\leq i \leq r} e_{B_{Q_i}}(M B_{Q_i}) [k_i:k],$$
where $k_i$ is the residue field of $ B_{Q_i}$, $k$ is the residue field of $(A,M)$, and $[L:K]=\mbox{dim}_KL$.

\end{theorem}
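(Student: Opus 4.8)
The plan is to compute the length $\ell_A(B/M^nB)$ of the $A$-module $B/M^nB$ in two different ways for $n\gg 0$, and to compare the leading coefficients of the resulting polynomials in $n$.

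First I would record that, since $B$ is integral over $A$, each maximal ideal $Q_i$ of $B$ contracts to $M$, so $MB$ lies in the Jacobson radical of the semilocal ring $B$; hence $B/M^nB$ is Artinian and decomposes as $\prod_{i=1}^r B_{Q_i}/M^nB_{Q_i}$. For any $B_{Q_i}$-module $N$ of finite length, a composition series over $B_{Q_i}$ has all of its quotients isomorphic to $k_i$, and $\ell_A(k_i)=[k_i:k]$, so $\ell_A(N)=[k_i:k]\,\ell_{B_{Q_i}}(N)$. Applying this to $N=B_{Q_i}/M^nB_{Q_i}$ and using that $MB_{Q_i}$ is $Q_iB_{Q_i}$-primary with $\dim B_{Q_i}=\dim A=:d$ by hypothesis, the Hilbert--Samuel polynomial gives $\ell_{B_{Q_i}}(B_{Q_i}/M^nB_{Q_i})=\frac{e_{B_{Q_i}}(MB_{Q_i})}{d!}\,n^d+(\text{lower order in }n)$. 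Summing over $i$ yields $\ell_A(B/M^nB)=\frac{1}{d!}\bigl(\sum_{i=1}^r[k_i:k]\,e_{B_{Q_i}}(MB_{Q_i})\bigr)n^d+(\text{lower order})$; in the notation of \ref{par0} this says $e^{(d)}_B(M)=\sum_{i=1}^r[k_i:k]\,e_{B_{Q_i}}(MB_{Q_i})$, where $B$ is viewed as an $A$-module, whose support is all of $\Spec A$ because $A\subset B$.

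For the second computation I would choose $b_1,\dots,b_m\in B$ whose images in $L=K\otimes_A B$ form a $K$-basis, where $m=[L:K]$. Since the structure map $B\to L$ is $A$-linear, the $b_i$ are $A$-linearly independent, so $F:=\sum_i Ab_i\cong A^m$ is a free submodule of $B$. Tensoring $0\to F\to B\to B/F\to 0$ with $K$ shows $K\otimes_A(B/F)=0$, so $B/F$ is a finitely generated torsion $A$-module; as $A$ is a domain, any nonzero element of its annihilator already forces $\dim(B/F)<d$, hence $e^{(d)}_{B/F}(M)=0$. Additivity of the $d$-multiplicity (see (\ref{additive})) then gives $e^{(d)}_B(M)=e^{(d)}_F(M)$, and applying additivity repeatedly to $0\to A^{m-1}\to A^m\to A\to 0$ gives $e^{(d)}_{A^m}(M)=m\,e_A(M)$. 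Thus $e^{(d)}_B(M)=[L:K]\,e_A(M)$, and comparing with the first computation yields the formula.

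The main point to be careful about — and essentially the only place the hypothesis $\dim B_{Q_i}=\dim A$ is used — is in the first computation: each $B_{Q_i}$ must contribute a term of top degree $d$ to $\ell_A(B/M^nB)$, since if some $B_{Q_i}$ had smaller dimension, its Hilbert--Samuel polynomial would have degree $<d$ and would drop out of the leading coefficient, breaking the identity. This is exactly the failure that occurs when $\Spec B$ has a component of dimension $<d$ lying over the closed point of $\Spec A$. Beyond that, the argument is a routine assembly of Hilbert--Samuel theory, the change-of-rings length identity $\ell_A(\,\cdot\,)=[k_i:k]\,\ell_{B_{Q_i}}(\,\cdot\,)$, and additivity of the multiplicity.
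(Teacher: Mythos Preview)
Your proof is correct and follows the same route the paper sketches: embed a free $A$-module of rank $[L:K]$ in $B$, observe the cokernel is torsion (hence has $d$-multiplicity zero), and apply additivity (\ref{additive}) to get $e^{(d)}_B(M)=[L:K]\,e_A(M)$. You have simply made explicit the other half of the argument --- the decomposition of $\ell_A(B/M^nB)$ over the maximal ideals $Q_i$ and the change-of-rings length identity --- which the paper leaves implicit in the phrase ``derives readily.''
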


Note that there is a free $A$-module of rank $[L:K]$, say 
$A^{[L:K]}$, included in $B$, and 
\begin{equation}\label{zmf} 0\to A^{[L:K]} \to B\to N\to 0,
\end{equation}
is an exact sequence of $A$-modules such that $N\otimes_AK=0$. As $N$ is supported in smaller dimension, the statement derives readily from the additive formula in (\ref{additive}).

For most applications we will introduce an additional condition: 
that all non zero element in the domain $A$ be a non-zero divisor in $B$ (i.e., that $B\to B\otimes_AK$ be injective). 

\end{parrafo}

\begin{parrafo}\label{par0} 

I) A noetherian ring $B$ is said to be pure dimensional if all localizations at maximal ideals have the same dimension. It is said to be equidimensional if $\dim B/q=\dim B$ for any minimal prime $q$. The two conditions hold if we require on $B$ that all saturated chains of prime ideals have the same length.

II) In what follows we only consider rings that are excellent and comply the previous condition on the chains of prime ideals. 
This ensures that the localization at any prime ideal, say $B_p$, is formally equidimensional (a requirements in Theorem \ref{Rees}, crucial  in our discussion). 

Assume now that $S$ is, in addition to the preceding conditions, a regular domain, and that:

1) $B$ contains and is finite over $S$.

2) Non-zero elements in $S$ are non zero divisors in $B$ (i.e., $B$ is a torsion free $S$-module).
 
 The importance of this latter condition will become clear in the section. Let $K$ denote the quotient field of $S$, and let $L$ denote the total quotient ring of $B$. The assumptions ensure that  $L=B\otimes K$, and hence, that 
\begin{equation}\label{008}
Ass(B)=\{q_1, \dots, q_r\}=Min (B)
\end{equation} (the associated primes are the minimal prime ideals in $B$). 

Note that if 1) holds, $\dim B=\dim S$. So by assumption $\dim B/q=\dim S$ for any minimal prime ideal, and the condition in 2) is equivalent to 
the equality in (\ref{008}). Namely, the conditions in 1) and 2) can be replaced by:

1') $B$ contains and is finite over $S$.

2') $Ass(B)=Min (B)$.

\color{black}
\end{parrafo}

\begin{parrafo} \label{par1}

The aim of these notes is to study the behavior of the multiplicity at an equidimensional scheme $X$ of finite type over a perfect field $k$. This leads to the study of the multiplicity along primes of an equidimensional ring, say $B$, which is a finite type algebra over $k$. 

Note that $B$ is excellent, equidimensional and pure dimensional. The same holds for $S$, if it is smooth over $k$ and irreducible. We will consider the case in which

1') $B$ contains and is finite over $S$.

2') $Ass(B)=Min (B)$,

which is, of course, is a particular case of \ref{par0}.

\end{parrafo}
\begin{parrafo}\label{pr46} 
We briefly sketch here why the study of the multiplicity along primes of an equidimensional ring, say $B$, which is a finite type algebra over a perfect field $k$, relates to the particular case in which there is a smooth subalgebra $S$ included in $B$, and with the conditions stated above.
%
\begin{enumerate}
\item Given an equidimensional algebra of finite type over $k$, say $B$, we show that there is a canonically defined quotient, say $B'$, so that $\Spec(B)$ and $\Spec(B')$ have the same underlying topological space, that $B$ and $B'$ have the same multiplicity at any prime, and $B'$ fulfills the equality in (\ref{008}). This enables us to assume that the equidimensional algebra $B$ already complies with the condition $Ass(B)=Min (B)$.
\item Let $B\to C$ be an \'etale morphism of affine algebras of finite type over $k$, and let $q$ be a prime in $C$ mapping to $p$. If $Ass(B)=Min (B)$ we show that after restriction to Zariski open neighborhoods at $q$ and $p$ we may assume that $C$ is equidimensional, and $Ass(C)=Min (C)$. 
\item Given $B$ and $p$ as above we will construct an \'etale morphism $B\to C$, so that $C$ contains an irreducible smooth $k$-algebra $S$, and
the conditions 1') and 2') in \ref{par1} hold for $S\subset C$ (i.e., conditions 1) and 2) in \ref{par0}). The finite extension $S\subset C$ will also comply a numerical condition: that the dimension of
$C\otimes_SK$ over the field $K$ be the multiplicity of $B_p$, where $K$ denotes the quotient field of $S$.
\item Finally, the information concerning the behavior of the multiplicity 
along primes in $C$ will descend to information of the behavior along primes of $B$.

\end{enumerate}

All these properties will be addressed in \ref{rk215}. 
\end{parrafo}

\begin{parrafo}\label{rk14} 
In the rest of this section, and in most parts of Sections 5 and 6, we will assume that $B$ is given together with $S\subset B$ as in \ref{par0}. 
 A first consequence of the multiplicity formula in Theorem \ref{MultForm} is that if $\mbox{ dim}_KL=n$, then 
$e_{B_P}(PB_P)\leq n$ for any prime ideal $P$ in $B$. We say that 
$\Spec(B)\to \Spec(S)$ is {\em transversal at $P\in \Spec(B)$}, when $e_{B_P}(PB_P)= n$. 
Note that if this condition holds for $n=1$, then the condition in 2) of \ref{par0} ensures that $B=S$.

If $B$ is complete, local, and equidimensional, then it is excellent and all saturated chains of prime ideals have the same length. Assume, in addition, that the ring contains an infinite field, then there is a regular subring $S\subset B$, and $\mbox{ dim}_KL=n$
where $n$ denotes the multiplicity of $B$. Moreover, if the zero ideal in $B$ has no embedded components, then the conditions in 
\ref{par0} hold.

\color{black}
 
 The following corollary characterizes transversal points (i.e., the $n$-fold points of $B$). 

\end{parrafo}
\begin{corollary}\label{C16} Let $P$ be a prime ideal in $B$. The following conditions 1) and 2) are equivalent:

1) $e_{B_P}(PB_P)= \mbox{ dim}_KL=n$.

2) Set ${\p}=S\cap P$.

2i) $P$ is the only prime in $B$ dominating ${\p}$ (i.e., $B_P=B\otimes_SS_{\p}$). 

2ii) $S_{\p}/{\p}S_{\p}=B_P/PB_P$.

2iii) $PB_P$ is the integral closure of ${\p}B_P$ in $B_P$.
\end{corollary}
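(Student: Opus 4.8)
The plan is to reduce both implications to three inputs already available: Zariski's multiplicity formula (Theorem \ref{MultForm}), Rees's theorem (Theorem \ref{Rees}), and the fact that a regular local ring has multiplicity one. Write $\p = S\cap P$, let $A = S_\p$ (a regular local domain), and consider $B_\p = B\otimes_S A$, which is finite over $A$; its maximal ideals are exactly the primes of $B$ lying over $\p$, say $Q_1,\dots,Q_t$ with $Q_1 = PB_\p$, and $B_{Q_1} = B_P$. I would first establish two preliminary facts. (a) For each $j$, $\dim B_{Q_j} = \dim A$: since $B/Q_j$ is finite over the domain $S/\p$ one has $\dim B/Q_j = \dim S/\p$, and then the standing equidimensionality and catenary hypotheses on $B$ and $S$ of \ref{par0}, together with $\dim B = \dim S$, give $\dim B_{Q_j} = \dim B - \dim B/Q_j = \dim S - \dim S/\p = \dim A$; this is exactly the hypothesis needed to invoke Theorem \ref{MultForm}. (b) $\sqrt{\p B_P} = PB_P$: there is no prime $Q$ with $\p\subseteq Q\subsetneq P$, since such a $Q$ would contract to $\p$ and then be comparable to $P$ over $\p$, contradicting incomparability in the integral extension $S\subseteq B$; in particular $\p B_P$ is $PB_P$-primary.

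For the implication $1)\Rightarrow 2)$ I would apply Zariski's formula to $A\subseteq B_\p$ and use $e_A(\p A)=1$ to get
\[
n=[L:K]=\sum_{j=1}^{t}e_{B_{Q_j}}(\p B_{Q_j})\,[k_j:k],
\]
with $k=S_\p/\p S_\p$ and $k_j$ the residue field of $B_{Q_j}$. Every summand is $\geq 1$, while the summand at $Q_1$ is $\geq e_{B_P}(PB_P)=n$, because $\p B_P\subseteq PB_P$ are both $PB_P$-primary (so multiplicities decrease) and $[k_1:k]\geq 1$. Hence $t=1$, that is, $P$ is the only prime over $\p$ and $B_P=B\otimes_S S_\p$, which is 2i); then $e_{B_P}(\p B_P)[k_1:k]=n$ together with $e_{B_P}(\p B_P)\geq n$ forces $[k_1:k]=1$, hence $S_\p/\p S_\p = B_P/PB_P$, which is 2ii), and also $e_{B_P}(\p B_P)=n=e_{B_P}(PB_P)$. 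Finally, since $\p B_P\subseteq PB_P$ are $PB_P$-primary with equal multiplicity, Theorem \ref{Rees} (the local rings involved are formally equidimensional) gives $\overline{\p B_P}=\overline{PB_P}$; combining with (b), $\overline{\p B_P}\subseteq PB_P\subseteq\overline{PB_P}=\overline{\p B_P}$, so $PB_P=\overline{\p B_P}$, which is 2iii).

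For the converse, assuming 2), I would note that 2i) makes $B_P=B_\p$ local, so Zariski's formula reduces to $n=[L:K]=e_{B_P}(\p B_P)[k_P:k]$; by 2ii) $[k_P:k]=1$, so $e_{B_P}(\p B_P)=n$; by 2iii) $\p B_P$ and $PB_P$ have the same integral closure, so Theorem \ref{Rees} yields $e_{B_P}(PB_P)=e_{B_P}(\p B_P)=n$, which is 1).

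The argument is essentially mechanical; the two places needing care are the dimension identity $\dim B_{Q_j}=\dim A$ (this is where the hypotheses of \ref{par0} on saturated chains of primes are genuinely used, since without it Theorem \ref{MultForm} does not apply) and the radical computation $\sqrt{\p B_P}=PB_P$, which is what legitimizes both the squeezing step and the extraction of 2iii) from Rees's theorem.
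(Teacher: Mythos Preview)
Your proof is correct and follows exactly the route the paper indicates: the paper's own argument is the one-line remark ``Here $e_{S_\p}(\p S_\p)=1$, and 2iii) follows from Theorem \ref{Rees},'' which is precisely your strategy of feeding the regularity of $S_\p$ into the multiplicity formula (Theorem \ref{MultForm}) to get 2i) and 2ii), and then invoking Rees for 2iii). You have simply filled in the details the paper leaves implicit, including the verification of the dimension hypothesis in Theorem \ref{MultForm} and the primary property of $\p B_P$.
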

Here $e_{S_\p}(\p S_\p)=1$, and 2iii) follows from Theorem \ref{Rees}.

\vskip 0,5cm
The multiplicity along primes of an algebra in the setting of \ref{par0}  has an interesting  compatibility when taking the reduction. This result, stated in Lemma \ref{lem28}, relies on the relation of the multiplicity of an ideal with that of its the integral closure.
\begin{remark}\label{rk2x8}

A) Fix a ring $B$. The integral closure is an operator on the ideals in $B$ that preserves inclusions: if $I \subset J$ are two ideals, then there is an inclusion of the integral closures. As the integral closure of the ideal zero is the nil-radical ideal, say $\mathcal{N}$ (the set of all nilpotent elements in $B$), it is natural to study this notion in the quotient ring 
$B_{\red}=B/\mathcal{N}$. In the setting of \ref{par0}: 
$\mathcal{N}=\cap_{i=1, \dots, r} q_i$, and the morphism from $S$ to $B_{\red}$ is injective.

B) Given $P$ and ${\p}$ as in the preceding corollary, then $P$ is a prime ideal in $B\otimes_SS_{\p}=B_{\p}$. Note that one can replace 2) by the conditions: 

2a) $S_{\p}/{\p}S_{\p}=B_P/PB_P$.

2b) $P$ is the integral closure of ${\p}B_{\p}$.

In fact, this last condition implies that $B_{\p}=B_P$.
\begin{lemma}\label{lem28} Fix $S\subset B$ and $n$ as above. Then

1) $S\subset B_{red}$ is in the setting of \ref{par0}.

2) Let $m=\dim_K(B_{red}\otimes_SK)$. A prime ideal $P$ is an $n$-fold point in $B$ if and only if $PB_{\red}$ is an $m$-fold point in $B_{red}$.
\end{lemma}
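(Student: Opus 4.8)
The plan is to prove both statements of Lemma~\ref{lem28} by reducing everything to the behavior of the multiplicity under passage to the nil-radical quotient, using the additivity of the $d$-multiplicity recorded in (\ref{additive}) together with Rees' theorem (Theorem~\ref{Rees}) and the criterion for transversal points (Corollary~\ref{C16}).

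\medskip

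\textbf{Step 1: Statement 1).} First I would check that $S\subset B_{\red}$ falls into the framework of \ref{par0}. Since $B$ is finite over $S$, so is its quotient $B_{\red}=B/\mathcal{N}$; this gives condition 1'). By Remark~\ref{rk2x8}(A), the composite $S\to B\to B_{\red}$ is injective, and since $S$ is a domain and $B_{\red}$ is reduced, every nonzero element of $S$ is a nonzerodivisor in $B_{\red}$: indeed $\mathcal{N}=q_1\cap\cdots\cap q_r$ and the minimal primes of $B_{\red}$ are the images of the $q_i$, each of which contracts to $(0)$ in $S$ (because $S\hookrightarrow B$ is torsion-free, so no $q_i$ meets $S\setminus\{0\}$). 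Hence $\mathrm{Ass}(B_{\red})=\mathrm{Min}(B_{\red})$ with all minimal primes contracting to $(0)$, which is condition 2'). Equivalently, by the reformulation at the end of \ref{par0}, since $\dim B_{\red}=\dim S$, conditions 1') and 2') hold.

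\medskip

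\textbf{Step 2: Statement 2), the numerical comparison.} Let $K$ be the quotient field of $S$. Tensoring the exact sequence $0\to\mathcal{N}\to B\to B_{\red}\to 0$ with $K$ over $S$ gives $0\to\mathcal{N}\otimes_SK\to B\otimes_SK\to B_{\red}\otimes_SK\to 0$, so $n=\dim_K(B\otimes_SK)=m+\dim_K(\mathcal{N}\otimes_SK)$. One should note that $\mathcal{N}\otimes_SK$ may well be nonzero (e.g.\ when $B$ has non-reduced structure in a top-dimensional component), so $m\le n$ but not necessarily $m=n$; this is harmless for what follows. Now fix a prime $P$ of $B$ and set $P'=PB_{\red}$, $\mathfrak p=S\cap P=S\cap P'$. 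I would apply Corollary~\ref{C16} to both finite extensions $S\subset B$ (with invariant $n$) and $S\subset B_{\red}$ (with invariant $m$, legitimate by Step~1).

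\medskip

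\textbf{Step 3: Matching the three conditions of Corollary~\ref{C16}.} The equivalence to prove is: $e_{B_P}(PB_P)=n$ $\iff$ $e_{(B_{\red})_{P'}}(P'(B_{\red})_{P'})=m$. Using Corollary~\ref{C16}, the left side is equivalent to (2i) $B_P=B\otimes_SS_{\mathfrak p}$, (2ii) $S_{\mathfrak p}/\mathfrak p S_{\mathfrak p}=B_P/PB_P$, (2iii) $PB_P$ is the integral closure of $\mathfrak p B_P$; and the right side to the analogous three conditions for $B_{\red}$. The key localization fact is that $(B_{\red})_{\mathfrak p}=(B_{\mathfrak p})_{\red}$ and, more precisely, localizing the sequence $0\to\mathcal{N}\to B\to B_{\red}\to 0$ at $P$ gives $(B_{\red})_{P'}=B_P/\mathcal{N}B_P=(B_P)_{\red}$, with the same residue field. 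From this: (2ii) for $B$ and for $B_{\red}$ are literally the same statement (same residue field $B_P/PB_P$). For (2i): if $B_P=B\otimes_SS_{\mathfrak p}$ then quotienting by $\mathcal{N}$ gives $(B_{\red})_{P'}=B_{\red}\otimes_SS_{\mathfrak p}$, and $P'$ is the only prime of $B_{\red}$ over $\mathfrak p$ because primes of $B_{\red}$ correspond to primes of $B$ containing $\mathcal{N}$ and $P$ was the only one over $\mathfrak p$; conversely, if $P'$ is the unique prime of $B_{\red}$ over $\mathfrak p$, then since every prime of $B$ contains $\mathcal{N}$ only after we note minimal primes contract to $(0)\subsetneq\mathfrak p$... here one must be a little careful: a priori $B$ could have several primes over $\mathfrak p$ whose images in $B_{\red}$ coincide, but two distinct primes of $B$ both containing $\mathcal{N}$ have distinct images, so uniqueness in $B_{\red}$ forces uniqueness in $B$ among primes containing $\mathcal{N}$; and any prime of $B$ over $\mathfrak p$ that is minimal contracts to $(0)$, contradiction, so all primes of $B$ over $\mathfrak p$ contain $\mathcal{N}$. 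Hence (2i) transfers both ways. For (2iii): the integral closure of $\mathfrak p B_P$ in $B_P$ maps onto the integral closure of $\mathfrak p(B_{\red})_{P'}$ in $(B_{\red})_{P'}$ (integral closure commutes with quotients, as in Remark~\ref{rk2x8}(A)), so if $PB_P$ is the integral closure of $\mathfrak p B_P$ then $P'(B_{\red})_{P'}$ is the integral closure of $\mathfrak p(B_{\red})_{P'}$; conversely, given (2iii) for $B_{\red}$ together with (2i), one gets $\overline{\mathfrak p B_P}$ is a prime (its preimage being between $\mathfrak p B_P$ and $PB_P$ and reducing to the maximal ideal of $(B_{\red})_{P'}$), and since $B_P$ is then local with closed point $PB_P$ (using 2i, $B_P=B_{\mathfrak p}$ is already local) this forces $\overline{\mathfrak p B_P}=PB_P$. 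Assembling: the three conditions hold for $(S\subset B,P)$ if and only if they hold for $(S\subset B_{\red},P')$, which by Corollary~\ref{C16} is exactly the desired equivalence $e_{B_P}(PB_P)=n\iff e_{(B_{\red})_{P'}}(P'(B_{\red})_{P'})=m$.

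\medskip

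\textbf{Main obstacle.} The delicate point is Step~3, condition (2iii) in the direction from $B_{\red}$ back to $B$: one has to rule out that $\overline{\mathfrak p B_P}$ is strictly smaller than $PB_P$ even when its reduction modulo $\mathcal{N}B_P$ is all of $P'(B_{\red})_{P'}$. The resolution is to first establish (2i), which makes $B_P=B_{\mathfrak p}$ genuinely local with maximal ideal $PB_P$, so that any ideal reducing to the maximal ideal of the reduction and sitting between $\mathfrak p B_P$ and $PB_P$, and being integrally closed, must equal $PB_P$; alternatively one can invoke Rees' theorem directly, comparing $e_{B_P}(\mathfrak p B_P)$ with $e_{B_P}(PB_P)$ via the additive formula (\ref{additive}) applied to $0\to\mathcal{N}B_P\to B_P\to(B_{\red})_{P'}\to 0$, since $\mathcal{N}B_P$ is supported in dimension $<\dim B_P$ when $P$ lies over a generic-enough $\mathfrak p$ — but the cleanest route is the purely ideal-theoretic one through Corollary~\ref{C16} as outlined.
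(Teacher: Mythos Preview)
Your approach is the same as the paper's: both reduce the equivalence to checking that conditions 2i), 2ii), 2iii) of Corollary~\ref{C16} hold for $P$ in $B$ if and only if they hold for the corresponding prime in $B_{\red}$. The paper dispatches this in one sentence, invoking the natural identification of primes in $B$ and $B_{\red}$.

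Your write-up is correct but over-engineered in Step~3. The discussion of (2i) is needlessly tangled: every prime ideal of $B$ contains the nilradical $\mathcal{N}$, so the primes of $B$ and of $B_{\red}$ are in canonical bijection, and (2i) transfers immediately in both directions. There is no case of ``primes of $B$ over $\mathfrak p$ not containing $\mathcal{N}$'' to worry about. Likewise, the ``main obstacle'' you flag for (2iii) dissolves once you note that the integral closure of any ideal $I\subset B$ already contains $\mathcal{N}$ (every nilpotent is integral over $0$), so integrally closed ideals of $B$ correspond bijectively to integrally closed ideals of $B_{\red}$; in particular $\overline{\mathfrak p B_P}=PB_P$ in $B_P$ iff $\overline{\mathfrak p (B_{\red})_{P'}}=P'(B_{\red})_{P'}$. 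With these two simplifications your Step~3 collapses to the one-line observation the paper makes.
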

\proof The result is a consequence of the previous corollary. Note that  2) holds for a prime in $B$ if and only if it holds for that same prime in $B_{\red}$ (we use here the natural identification of prime ideals in both rings).

\end{remark}
%
%
%
%
%
%
\vskip 0,3cm
The following technical lemma is an application of the preceding Theorem \ref{MultForm}, that  will be used in the next section.
\begin{lemma}\label{lem610} Let $S\subset B' \subset B$ be a tower of finite extensions of domains, where $S$ is as in \ref{par0}, with field of quotients $K$. Let $L'=B'\otimes_SK$, and $L=B\otimes_SK$. 
 
Let $n=\dim_K(B\otimes_sK)$,  $n'=\dim_K(B'\otimes_sK)$, and fix an $n$-fold point $P$ of $\Spec(B)$. Then $P'=P\cap B'$ is a  point of multiplicity $n'$ in $\Spec(B')$. 
\proof

Let ${\p}=P\cap S$. According to \ref{C16}, $P$ is the only prime in $B$ dominating ${\p}$, and both local rings $S_{\p}$ and $B_P$ have the same residue field. So $P$ is the only prime that dominates $P'$ in $B'$, and again, both local rings have the same residue field.

Note that $L$ and $L'$ are the quotient fields of the domains $B$ and $B'$ respectively, and $[L:L']=\frac{n}{n'}$. 

On the one hand one concludes that 
$n=e_{B_P}({\p}B_P)$ and that ${\p}B_P$ is a reduction of 
$PB_P$, on the other hand:
$$e_{B'_{P'}}(P'B'_{P'}) (\frac{n}{n'})= e_{B_P}(P'B_P).$$

As ${\p}B_P\subset P'B_P$,  $P'B_P$ is also a reduction of $PB_P$, so the right hand term in this equality is $n$. Therefore $e_{B'_{P'}}(P'B'_{P'})=n'$.
\endproof

\end{lemma}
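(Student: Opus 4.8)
The plan is to push the whole question down to the single local ring $B_P$, where Corollary \ref{C16} describes the relevant integral closures exactly, and then to apply Zariski's multiplicity formula (Theorem \ref{MultForm}) to the finite extension $B'_{P'}\subset B_P$; two uses of Rees's theorem (Theorem \ref{Rees}) will bracket the one use of Theorem \ref{MultForm}.

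First I would unwind the hypothesis that $P$ is an $n$-fold point of $B$. Put $\p=P\cap S$. By Corollary \ref{C16}, $P$ is the unique prime of $B$ over $\p$, the local rings $S_\p$ and $B_P$ have the same residue field, and $PB_P=\overline{\p B_P}$ (integral closure). Now $P'=P\cap B'$ also contracts to $\p$ in $S$, and since $B$ is integral over $B'$, every prime of $B$ over $P'$ lies over $\p$; hence $P$ is the only prime of $B$ over $P'$, so $B_P=B\otimes_{B'}B'_{P'}$ is local and module-finite over $B'_{P'}$. The tower of residue fields $S_\p/\p S_\p\subseteq B'_{P'}/P'B'_{P'}\subseteq B_P/PB_P$ has equal end terms, so all three agree; in particular $B'_{P'}$ and $B_P$ share a residue field. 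Finally $L'$ and $L$ are the fraction fields of the domains $B'$ and $B$, one has $B_P\otimes_{B'_{P'}}L'=L$ with $[L:L']=n/n'$, and $\dim B_P=\dim B'_{P'}$ ($=\dim S_\p$, using normality of $S$ / the chain condition of \ref{par0}). These are precisely the hypotheses needed to feed $B'_{P'}\subset B_P$ into Theorem \ref{MultForm}.

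Then that theorem, since $B_P$ is local with maximal ideal $PB_P$ and the residue-field extension is trivial, will give
$$ e_{B'_{P'}}(P'B'_{P'})\cdot\frac{n}{n'}=e_{B_P}(P'B_P). $$
It would remain to evaluate the right-hand side. From $\p B_P\subseteq P'B_P\subseteq PB_P=\overline{\p B_P}$, and since an integral closure is integrally closed, passing to integral closures forces $\overline{P'B_P}=PB_P$; thus $P'B_P$ is a reduction of $PB_P$ in the formally equidimensional local ring $B_P$ (a localization of the excellent, chain-condition ring $B$). Rees's theorem then yields $e_{B_P}(P'B_P)=e_{B_P}(PB_P)=n$, the last equality being the hypothesis on $P$. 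Substituting into the display gives $e_{B'_{P'}}(P'B'_{P'})=n'$.

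The one genuine choice is which extension to run through Theorem \ref{MultForm}: taking $B'_{P'}\subset B_P$ keeps every integral-closure computation inside $B_P$, where Corollary \ref{C16} applies verbatim, whereas running it on $S\subset B'_{P'}$ would only compute $e_{B'_{P'}}(\p B'_{P'})$ and then force a descent of the reduction relation from $B_P$ down to $B'_{P'}$, which is less transparent. Apart from that, I expect the only obstacle to be routine bookkeeping---confirming that $B_P$ really is the localization $B\otimes_{B'}B'_{P'}$ (so the target of Theorem \ref{MultForm} is local and the sum collapses), and that the dimension and formal-equidimensionality requirements are all inherited from the standing assumptions of \ref{par0}.
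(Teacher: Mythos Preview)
Your proof is correct and follows essentially the same route as the paper: reduce to the local picture via Corollary~\ref{C16}, apply Theorem~\ref{MultForm} to the finite extension $B'_{P'}\subset B_P$ to obtain $e_{B'_{P'}}(P'B'_{P'})\cdot(n/n')=e_{B_P}(P'B_P)$, and then evaluate the right-hand side as $n$ by observing that $\p B_P\subset P'B_P\subset PB_P$ forces $P'B_P$ to be a reduction of $PB_P$. Your write-up is in fact slightly more explicit than the paper's in justifying why $P$ is the unique prime over $P'$ and why the integral closure of $P'B_P$ is exactly $PB_P$.
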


\end{section}

\color{black}

\begin{section}{Multiplicity and projection on smooth schemes.}

In this section we fix a finite extension of excellent rings $S\subset  B$ as in \ref{par0}, and we study the finite morphism $\delta: \Spec(B) \to \Spec(S)$. We assume that the general fiber has $n$ points, and we show that a strong link is established between $F_n$, the set of $n$-fold points of $ \Spec(B)$, and its image in  $\Spec(S)$. 

In Section 3 we have considered the case in which $V=\Spec(S)$ and $B=S[Z]/f(Z)$ was defined by a monic polynomial $f(Z) \in S[Z]$. There we studied properties which can be expressed in terms of the coefficients of the polynomial. 
We shall pursue in this line and Proposition \ref{16} is the main result. It  is a first step for the construction of a local presentation, and will ultimately lead us to the properties discussed in part {\bf 3)} of the introduction. 

In Corollary \ref{C17} we show that there is a natural identification $F_n \equiv \delta(F_n)$. In addition we show that $F_n$ is closed if both rings in $S\subset  B$ are finite type $k$-algebras as in \ref{par1}.
This result will later lead to a direct proof of Theorem \ref{lem51c}, as stated there, without using Bennett's results.
\begin{parrafo}\label{par2}

Fix an inclusion $S\subset B$ with the conditions given in \ref{par0}. 
An {\em algebraic presentation} of $B$, relative to the inclusion, will consist of a finite set of elements $\{\theta_1, \dots , \theta_N \}$ so that 
$B=S[\theta_1, \dots , \theta_N ]$.

We attach to  $\{\theta_1, \dots , \theta_N \}$, polynomials
$$f_1(Z), \ldots , f_N(Z) \in K[Z],$$
where $K$ is the quotient field of $S$ and each $f_i(Z)$ is the monic polynomial of smallest degree vanishing at $\theta_i$. Hence $K[Z]/\langle f_i(Z) \rangle= K[\theta_i]\subset L.$

Let $S[\theta_i]$ denote the $S$-subring of $B$ spanned by 
$\theta_i$. The inclusion $0\to S[\theta]\to B$ induces
the inclusion
$ 0\to S[\theta_i]\otimes_S K \to B\otimes_S K.$
In particular, $S[\theta_i]\otimes_S K=K[Z]/\langle f_i(Z) \rangle$, for $i=1, \dots, N$.
\end{parrafo}
\begin{lemma}\label{lem1.2}

With the assumptions  and notation as above:

1) $f_1(Z), \ldots , f_N(Z) \in S[Z]$.

2) $S[\theta_i]=S[Z]/\langle f_i(Z) \rangle$, for $i=1, \dots , N$.
\proof

1) Fix an index, say $i=1$. As $K[Z]/\langle f_1(Z) \rangle= K[\theta_1]\subset L$ is a finite extension, there is a surjection of the $r$ prime ideals $\{Q_1, \dots, Q_r\}$ in $L$ (induced by the $r$ primes in (\ref{008}), to the prime ideals in $K[\theta_1]$. These, in turn, are in one to one correspondence with the irreducible factors 
of $f_1(Z)$ in $K[Z]$. So there is an integer, say $s$, $s\leq r$, and irreducible monic polynomials, say 
$g^{(1)}(Z), \dots , g^{(s)}(Z),$
so that 
$$f_1(Z)=(g^{(1)}(Z))^{h_1} \cdots (g^{(s)}(Z))^{h_s}$$
in $K[Z]$, for some positive integers $h_1, \dots ,h_s$.

For the proof of 1) it suffices to check that each irreducible factor 
$g^{(i)}(Z)$ has coefficients in $S$.

There is an inclusion $\langle f_1(Z)\rangle \subset \langle g^{(1)}(Z)\rangle$ in $K[Z]$, and the latter is a prime ideal. 
Assume that the induced prime in 
$K[\theta_1]$ is dominated by the prime 
$Q_1$ in $L$.

Recall that $Q_1$ is in natural correspondence with the minimal prime ideal $q_1\subset B$, and that $q_1$ intersects $S$ at zero (see \ref{par1}). Hence, a finite extension of domains is given by $S\subset B/q_1$, and this induces, by localization, the inclusion of fields $K\subset B_{q_{1}}/q_1B_{q_{1}}=B/q_1\otimes_SK$. In addition, if $\overline{\theta}_1$ denotes the class of $\theta_1$ in 
$B/q_1$ (or say in $B_{q_{1}}/q_1B_{q_{1}}$), then $g^{(1)}(Z)\in K[Z]$ is the minimal polynomial of $\overline{\theta}_1$.
Finally, as $S$ is normal, $g^{(1)}(Z)\in S[Z]$ (see e.g. \cite{Mat1}, Theorem 9.2 , p.65).

2) Consider again the index $i=1$. Let $D$ denote the degree 
of the monic polynomial $f_1(Z)$ in $S[Z]$. Let $J\subset S[Z]$
denote the kernel of the surjective homomorphisms 
$S[Z]\to S[\theta_1](\subset B)$.
The inclusion $B\subset B\otimes_SK$, given by the condition 3) in \ref{par1}, ensures that $\langle f_1(Z) \rangle \subset J$. The ring 
$S[Z]/\langle f_1(Z) \rangle$ is a free $S$-module with basis 
$\{1, \overline{Z}, \dots , \overline{Z}^{D-1} \},$ and we claim that $S[Z]/\langle f_1(Z) \rangle=S[\theta_1]$.

Let $\overline{g}=s_0.1+ s_1 \overline{Z}+\cdots 
s_{N-1}\overline{Z}^{D-1}$ denote the class of an element $g\in J$ in $S[Z]/\langle f_1(Z) \rangle$, and recall that 
$S[\theta_1]\otimes_S K=K[Z]/\langle f_1(Z) \rangle$. If some coefficient $s_i$ is non-zero in $S$, then $\overline{g}$ would impose a non-trivial relation on $\{1, \overline{Z}, \dots , \overline{Z}^{D-1} \}$. This contradicts the fact that $\dim_K(S[\theta_1]\otimes K)=D$.
\endproof
\end{lemma}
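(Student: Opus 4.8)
The plan is to prove both statements together, index by index, fixing $i=1$ for notational simplicity. For part 1), the strategy is to factor $f_1(Z)$ into irreducible monic factors over $K[Z]$, and to show that each irreducible factor already lies in $S[Z]$; since $f_1(Z)$ is the product of these factors, it will follow that $f_1(Z)\in S[Z]$. The key point is to identify each irreducible factor $g^{(j)}(Z)$ as the minimal polynomial over $K$ of the class $\overline{\theta}_1$ of $\theta_1$ in the residue domain $B/q_j$, where $q_j$ is one of the minimal primes of $B$ from (\ref{008}). Here I would use that $S\subset B$ has $Ass(B)=Min(B)$, so $B\otimes_S K=L$ is the total quotient ring, its primes $Q_1,\dots,Q_r$ correspond to the minimal primes $q_1,\dots,q_r$ of $B$, each $q_j$ meets $S$ in $(0)$ (as noted in \ref{par1}), and hence $S\subset B/q_j$ is again a finite extension of domains with quotient fields $K\subset \operatorname{Frac}(B/q_j)$. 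Since the primes of the finite $K$-algebra $K[\theta_1]=K[Z]/\langle f_1(Z)\rangle$ correspond to the irreducible factors of $f_1(Z)$, and they are dominated by the $Q_j$, each $g^{(j)}(Z)$ is the minimal polynomial of $\overline{\theta}_1\in B/q_j$, which is integral over $S$. The conclusion $g^{(j)}(Z)\in S[Z]$ then follows because $S$ is normal (smooth implies normal), invoking the standard fact that the minimal polynomial of an element integral over a normal domain has coefficients in that domain (e.g., Matsumura, Theorem 9.2).

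For part 2), with $D=\deg f_1(Z)$, the plan is to consider the surjection $S[Z]\twoheadrightarrow S[\theta_1]\subset B$ with kernel $J$, observe that $\langle f_1(Z)\rangle\subset J$ because the torsion-freeness condition 2') (equivalently, $B\hookrightarrow B\otimes_S K$) forces $f_1(\theta_1)=0$ already in $B$, and then show the inclusion $\langle f_1(Z)\rangle\subset J$ is an equality. For this I would argue by freeness: $S[Z]/\langle f_1(Z)\rangle$ is a free $S$-module with basis $1,\overline Z,\dots,\overline Z^{D-1}$, so any element of $J/\langle f_1(Z)\rangle$ would be represented by a polynomial of degree $<D$ mapping to $0$ in $S[\theta_1]$; tensoring with $K$ and using $S[\theta_1]\otimes_S K=K[Z]/\langle f_1(Z)\rangle$ (which has $K$-dimension exactly $D$) shows such a relation must be trivial. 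Hence $J=\langle f_1(Z)\rangle$ and $S[\theta_1]=S[Z]/\langle f_1(Z)\rangle$.

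The main obstacle I anticipate is the careful bookkeeping in part 1): one must correctly match up the irreducible factors of $f_1(Z)$ over $K$ with the minimal primes $q_j$ of $B$, and ensure that for each factor there genuinely is a minimal prime $q_j$ whose associated prime $Q_j$ in $L$ dominates the corresponding prime of $K[\theta_1]$ — this relies crucially on $Ass(B)=Min(B)$ so that $L$ is a product of fields (or at least that every minimal prime of $K[\theta_1]$ is reached by some $Q_j$). Once each irreducible factor is pinned down as a minimal polynomial over the normal domain $S$, the integrality statement is the routine part. In part 2), the only subtlety is noting that $f_1(\theta_1)=0$ holds in $B$ (not merely in $B\otimes_S K$), which is exactly what the torsion-free hypothesis guarantees; everything else is linear algebra over $S$ and $K$.
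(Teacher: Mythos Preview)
Your proposal is correct and follows essentially the same route as the paper: factor $f_1(Z)$ into monic irreducibles over $K$, identify each factor as the minimal polynomial of the image of $\theta_1$ in some $B/q_j$ (using the surjection from the primes $Q_j$ of $L$ onto those of $K[\theta_1]$ and that $q_j\cap S=0$), then invoke normality of $S$; for part 2), use freeness of $S[Z]/\langle f_1(Z)\rangle$ and compare $K$-dimensions after tensoring. One small caution: $L$ need not be a product of fields here since $B$ is not assumed reduced, but your parenthetical fallback (surjectivity of $\Spec L\to\Spec K[\theta_1]$ from finiteness) is exactly what is needed and is what the paper uses.
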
 
\begin{remark}\label{rk56} Any element, say $\theta\in B$ is integral over $S$, and hence $\theta\otimes 1\in B\otimes_S K$ has a minimal polynomial over $K$, say $f_{\theta}(Z)\in K[Z]$. The previous lemma shows that $f_{\theta}(Z)\in S[Z]$. Moreover, when there is an inclusion, say $B\subset B\otimes_SK$ (e.g. in the conditions of \ref{par1}), then $S[\theta]=S[Z]/\langle f_{\theta}(Z) \rangle$.

\end{remark}

We shall draw special attention to algebras of the form 
 $S[Z]/\langle f(Z) \rangle$, where $S$ is a regular ring as in \ref{par0}, and $f(Z)\in S[Z]$ is a monic polynomial.
Given a prime ideal ${\p}$ in $S$, $\nu_{\p}$ will denote the order 
function on $S$ defined by the local regular ring $S_{\p}$.
\begin{proposition}\label{P110} Fix $S$, as above, and an algebra of the form 
$B=S[Z]/\langle f(Z) \rangle$, where
$$f(Z)=Z^{s}+c_1^{}Z^{s-1}+ \cdots +c_{s}^{}\in 
S[Z].$$

Then: 1) The conditions in \ref{par0}
hold.

2) A prime ideal ${\p}$ in $S$ is the image of an $s$-fold point if and only if there is an element $\lambda\in S_{\p}$, so that setting 
$Z_1=Z-\lambda$ and 
\begin{equation}\label{eq109}
f(Z)=g(Z_1)=Z_1^{s}+c'_1Z_1^{s-1}+ \cdots +c'_{s}\in 
S_{\p}[Z_1](=S_{\p}[Z])
\end{equation}
then 
\begin{equation}\label{eq110}
\nu_{\p}(c'_j)\geq j, \ j=1,\dots , s.
\end{equation}
\end{proposition}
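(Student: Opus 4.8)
The plan is to derive both parts from the material already set up: part 1) is essentially the content of Lemma \ref{lem1.2} and the discussion in \ref{par0}, while part 2) is the heart of the statement and should be reduced to the multiplicity formula of Theorem \ref{MultForm} together with Rees' theorem (Theorem \ref{Rees}), following the pattern of the proof of Theorem \ref{eliRM}. For part 1), I would observe that $B = S[Z]/\langle f(Z)\rangle$ with $f$ monic of degree $s$ is a free $S$-module of rank $s$, hence finite and torsion-free over $S$; thus $B \subset B\otimes_S K = K[Z]/\langle f(Z)\rangle$, which gives condition 2) of \ref{par0}, and since $\dim B = \dim S$ with $B$ torsion-free one gets $\mathrm{Ass}(B) = \mathrm{Min}(B)$ and equidimensionality. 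Excellence is inherited, so the hypotheses of \ref{par0} are met, in particular $B_P$ is formally equidimensional for every prime $P$, which is what licenses the use of Theorem \ref{Rees}.

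For part 2), fix a prime ${\p}$ of $S$ and work over the regular local ring $(S_{\p}, M_{\p})$ with residue field $\kappa({\p})$. First the direction ($\Leftarrow$): suppose there is $\lambda \in S_{\p}$ so that in the new variable $Z_1 = Z - \lambda$ the coefficients satisfy $\nu_{\p}(c'_j) \geq j$ for all $j$; I claim $\langle M_{\p}, Z_1\rangle$ is an $s$-fold point. Indeed $B_{\p} := B\otimes_S S_{\p} = S_{\p}[Z_1]/\langle g(Z_1)\rangle$, and the condition $c'_j \in M_{\p}^j$ says exactly that $g(Z_1) \in \langle M_{\p}, Z_1\rangle^s$, so the ideal $Q = \langle M_{\p}, Z_1\rangle B_{\p}$ is maximal with $\dim B_Q = \dim S_{\p}$, and moreover $M_{\p}B_Q$ is a reduction of $QB_Q$ (both have the same integral closure, since $Z_1$ is integral over $S_{\p}$ of degree $s$ with all relevant coefficients in $M_{\p}$ — this is where one invokes Theorem \ref{Rees} in the direction $e_R(M_{\p}B_Q) = e_R(QB_Q)$, noting $e_{S_{\p}}(M_{\p}) = 1$ and the residue field extension $\kappa({\p}) \subset B_Q/QB_Q$ is trivial because $g(Z_1) \equiv Z_1^s$ mod $M_{\p}$). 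Applying the multiplicity formula of Theorem \ref{MultForm} to $S_{\p} \subset B_{\p}$ (general fiber has $s$ points) then forces $e_{B_Q}(QB_Q) = s$, so $Q$ is an $s$-fold point lying over ${\p}$; this is Corollary \ref{C16} again.

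For the converse ($\Rightarrow$): suppose ${\p}$ is the image of an $s$-fold point $P$. By Corollary \ref{C16}, $P$ is the unique prime over ${\p}$, $S_{\p}/M_{\p}S_{\p} = B_P/PB_P$, and $PB_P$ is the integral closure of $M_{\p}B_P$. The equality of residue fields means the class of $Z$ in $B_P/PB_P = \kappa({\p})$ is the image of some $\lambda \in S_{\p}$; set $Z_1 = Z - \lambda$ and write $f(Z) = g(Z_1) = Z_1^s + c'_1 Z_1^{s-1} + \cdots + c'_s$. Then $Z_1 \in P B_P$, so reducing mod $M_{\p}$ the polynomial $\bar g(Z_1) \in \kappa({\p})[Z_1]$ has $Z_1$ as its only root in $\kappa({\p})$, hence (since it is monic of degree $s$, totally split over the residue field with that single root by uniqueness of the point and $\dim_K L = s$) $\bar g(Z_1) = Z_1^s$, i.e. $c'_j \in M_{\p}$ for all $j$. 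To upgrade $c'_j \in M_{\p}$ to the sharper $\nu_{\p}(c'_j) = c'_j \in M_{\p}^j$, I would argue as in the proof of Theorem \ref{eliRM}: the fact that $P = Q := \langle M_{\p}, Z_1\rangle B_{\p}$ is an $s$-fold point means $g(Z_1) \in Q^s \cap (\text{stuff})$ — more precisely, $B_Q$ has multiplicity $s$ and $M_{\p}B_Q$ is a reduction of $QB_Q$, and by Theorem \ref{Rees} this reduction property combined with $e_{S_{\p}}(M_{\p})=1$ and the numerical formula pins down that the $Z_1$-adic expansion of $g$ must have $c'_j \in M_{\p}^j$; equivalently one checks directly that if some $c'_{j_0} \notin M_{\p}^{j_0}$ then $g(Z_1) \notin \langle M_{\p}, Z_1\rangle^s$, making $QB_Q$ have order $< s$ hence multiplicity $< s$, a contradiction.

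\textbf{Main obstacle.} The delicate point is the converse direction, specifically passing from the soft statement ``$c'_j \in M_{\p}$'' (which only uses equality of residue fields) to the quantitative ``$c'_j \in M_{\p}^j$''. This requires genuinely using that the multiplicity equals $s$ (not merely $\geq 1$), and the cleanest route is to combine Theorem \ref{MultForm} with Theorem \ref{Rees}: $e_{B_P}(PB_P) = s = e_{S_{\p}}(M_{\p})\cdot[L:K]/1$ via the formula, forcing $M_{\p}B_P$ to be a reduction of $PB_P$, and then Rees' theorem identifies the integral closures, from which the membership $c'_j \in M_{\p}^j$ follows by a standard computation with the monic polynomial $g$ over the regular local ring $S_{\p}$ (this last computation is the routine part, analogous to the ``$b_i \in M_x^i$'' step in the proof of Theorem \ref{eliRM}). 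One must also take some care that the general fiber of $\Spec(B)\to\Spec(S)$ has exactly $s$ points — but this is immediate since $L = B\otimes_S K = K[Z]/\langle f(Z)\rangle$ has $K$-dimension $s = \deg f$.
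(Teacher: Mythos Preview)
Your proof is correct and follows the same overall architecture as the paper's: part 1) via freeness, and part 2) via Corollary \ref{C16} for residue-field equality, then reduction modulo $M_{\p}$ to get $\bar g(Z_1)=Z_1^s$, then the quantitative upgrade to $c'_j\in M_{\p}^j$. The one notable difference is in the tools you bring to the last step in each direction. You repeatedly invoke Theorem \ref{MultForm} and Theorem \ref{Rees} (arguing that $Z_1$ is integral over $M_{\p}B_Q$ because $c'_j\in M_{\p}^j$, hence $M_{\p}B_Q$ is a reduction of $QB_Q$, hence equal multiplicities), whereas the paper uses only the elementary fact that for a hypersurface in a regular local ring the multiplicity equals the order of the defining equation: since $S_{\p}[Z]_{\langle Z_1,\p\rangle}$ is regular, $B_P$ has multiplicity $s$ if and only if $f(Z)\in\langle Z_1,\p\rangle^s$, which is equivalent to $\nu_{\p}(c'_j)\geq j$. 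You do mention this direct route as an ``equivalently'' at the end, so you have both arguments in hand; the paper simply commits to the lighter one. Your route has the virtue of rehearsing the machinery that will be needed in the more general setting of Proposition \ref{16} (where $B$ is no longer a hypersurface over $S$), but for this proposition the hypersurface-order argument is cleaner and avoids any appeal to Rees' theorem.
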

\proof Here $B$ is a finite free $S$-module, so 1) is clear. 

2) Note first that if (\ref{eq110}) holds, the hypersurface defined by $f(Z)$ in the regular ring $S_{\p}[Z]$ has 
a point of multiplicity $s$ at the prime ideal spanned by 
${\p}S_{\p}[Z]$ and the element $Z_1$. Namely, 
$$f(Z)\in\langle {\p}S_{\p}[Z], Z_1\rangle^s \setminus \langle {\p}S_{\p}[Z], Z_1\rangle^{s+1} . $$
So $\langle {\p}S_{\p}[Z], Z_1\rangle$ induces a prime ideal at the quotient
$B\otimes_SS_{\p}$, which is an $s$-fold point.

Conversely, if $B=S[Z]/\langle f(Z) \rangle$ has an $s$-fold point $P$, mapping to ${\p}$ in $S$, then $P$ is the unique prime ideal dominating ${\p}$, and the local rings 
$B_P$ and  $S_{\p}$ have the same residue fields (\ref{C16}). In particular, the class of $f(Z)$ in $S_{\p}/{\p}S_{\p}[Z]$ is of the form 
$(Z-\overline{\lambda})^s$, for a suitable 
$\overline{\lambda}\in S_{\p}/{\p}S_{\p}$. 
Fix $ \lambda \in S_{\p}$, inducing $\overline{\lambda}$ in the residue field, and let $Z_1=Z-\lambda$. Let 
$f(Z)=g(Z_1)=Z_1^{s}+c'_1Z_1^{s-1}+ \cdots +c'_{s}\in 
S_{\p}[Z_1](=S_{\p}[Z])$
and note firstly that $\nu_{\p}(c'_j)\geq 1$, $j=1, \ldots , s$. So $f(Z)\in \langle Z_1, {\p}\rangle S_{\p}[Z] $, and this maximal ideal in $S_{\p}[Z]$ induces the prime ideal  $PB_P$ in the quotient:
$$B_P=B\otimes_SS_{\p}=S_{\p}[Z]/\langle f(Z) \rangle.$$

Finally, as $S_p[Z]_{\langle Z_1, p\rangle} $ is regular, $B_P$ is an $s$-fold point if and only if 
$f(Z)\in (\langle Z_1, {\p}S_{\p}[Z]\rangle)^s $, and this holds if and only if $\nu_{\p}(c'_j)\geq j$, $j=1, \dots , s$.
%
%
\begin{parrafo}\label{par210}
Fix $S\subset B$ subject to the conditions in \ref{par0}.
Fix an algebraic presentation of $B$, say $\{\theta_1, \dots , \theta_N \}$ (i.e., $B=S[\theta_1, \dots , \theta_N ]$). 
Let $f_1(Z), \ldots , f_N(Z) \in K[Z]$ denote the minimal polynomials attached to the inclusion $S\subset B$ and the presentation. Let $d_i$ denote the degree of $f_i(Z)$. 

One can reorder $\{\theta_1, \dots , \theta_N \}$, and assume that there is an integer $M$, $1\leq M\leq N$, so that $d_i\geq 2$ for $1\leq i \leq M$, and $d_i=1$ for $M+1\leq i \leq N$.
So $\theta_i\in S$ for $M+1\leq i \leq N$, 
\begin{equation}\label{eq211}
B=S[\theta_1, \dots , \theta_M] \mbox{ and } d_i=\deg(f_i(Z))\geq2 \ , i=1, \dots ,M.
\end{equation}

\end{parrafo}
\begin{remark}\label{xrecta}{\em Multiplication by affine spaces} 
The product  $\Spec(B)\times_{\Spec(k)}{\mathbb{A}^1}$ is the affine scheme of $B[Y]$, where $Y$ is a variable over $B$. Note that 
$S[Y]\subset B[Y]$ is a finite extension, $B[Y]=S[Y][\theta_1, \dots , \theta_M]$, and the minimal polynomial of $\theta_i$ is the same 
$f_i(Z)$, for $i=1, \dots, M.$
\end{remark}
\begin{proposition}\label{16} Fix $S\subset B$ as in \ref{par0}, and fix $B=S[\theta_1, \dots , \theta_M]$ 
as above. Then, if $ \dim_K(B\otimes_s K)=n $, a point ${\p}\in Spec(S)$ is the image of an $n$-fold point of $Spec(B)$, if and only if 
${\p}$ is the image of a point of multiplicity $d_i (\geq 2)$ in 
$Spec(S[Z]/\langle f_i(Z) \rangle)$, for every index $i=1,\dots , M$.

%
%
\end{proposition}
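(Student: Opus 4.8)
The plan is to reduce the statement to Proposition \ref{P110}, applied to each of the $M$ rank-one-or-higher generators, by exploiting the transitivity of the notion of $n$-fold point along the tower of extensions $S\subset S[\theta_i]\subset B$. First I would fix a prime $\p\in\Spec(S)$ and recall that, by the multiplicity formula (Theorem \ref{MultForm}) together with Corollary \ref{C16}, $\p$ is the image of an $n$-fold point of $\Spec(B)$ if and only if there is a unique prime $P$ over $\p$ in $B$, with $S_\p/\p S_\p=B_P/PB_P$ and $\p B_P$ a reduction of $PB_P$ (the last by Theorem \ref{Rees}, since $B_P$ is formally equidimensional). I would then localize everything at $\p$, replacing $S$ by $S_\p$, $B$ by $B_\p=B\otimes_S S_\p$, and each $S[\theta_i]$ by $S[\theta_i]\otimes_S S_\p=S_\p[Z]/\langle f_i(Z)\rangle$ (using Lemma \ref{lem1.2}(2)). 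Note $\dim_K(S[\theta_i]\otimes_S K)=d_i$, so an $n$-fold point of $B$ maps, via Lemma \ref{lem610} applied to $S\subset S[\theta_i]\subset B$, to a $d_i$-fold point of $S[\theta_i]$; this gives the "only if" direction immediately for each index $i$.

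For the "if" direction I would argue as follows. Suppose $\p$ is the image of a $d_i$-fold point of $S[\theta_i]$ for every $i=1,\dots,M$. By Proposition \ref{P110}(2), for each $i$ there is $\lambda_i\in S_\p$ such that, writing $Z_i':=Z-\lambda_i$ and $f_i(Z)=Z_i'^{\,d_i}+c'_{i,1}Z_i'^{\,d_i-1}+\cdots+c'_{i,d_i}$ in $S_\p[Z]$, one has $\nu_\p(c'_{i,j})\geq j$ for $j=1,\dots,d_i$. Equivalently, the hypersurface $\{f_i(Z)=0\}$ has a point of multiplicity $d_i$ at the prime $M_i:=\langle \p S_\p[Z],\ Z-\lambda_i\rangle$ of $S_\p[Z]$. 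The key geometric claim is then that the element $\Theta:=(\theta_1-\lambda_1,\dots,\theta_M-\lambda_M)$ determines a single prime $P$ over $\p$ in $B_\p$ whose residue field equals $S_\p/\p S_\p$: indeed, since each $f_i$ reduces mod $\p$ to $(Z-\lambda_i)^{d_i}$, the fiber ring $B_\p/\p B_\p$ is a quotient of $(S_\p/\p S_\p)[\bar\theta_1,\dots,\bar\theta_M]$ in which each $\bar\theta_i$ is nilpotent equal to $\bar\lambda_i$; hence $B_\p/\p B_\p$ is local with residue field $S_\p/\p S_\p$, giving conditions 2i) and 2ii) of Corollary \ref{C16}. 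It then remains to verify 2iii), i.e. that $\p B_P$ is a reduction of $PB_P$, equivalently that $e_{B_P}(\p B_P)=n$; by the associativity/additive formula (\ref{additive}) and Theorem \ref{MultForm} this is equivalent to $e_{B_P}(\p B_P)\cdot 1 = \dim_K L = n$, and since $\dim_K L = \prod$-type multiplicativity of degrees is replaced here by $\dim_K L=n$ by hypothesis, one reduces to showing $PB_P=\overline{\p B_P}$. I would obtain this by showing $P^n\subset \p B_P$: write $P=\langle \p,\ \theta_1-\lambda_1,\dots,\theta_M-\lambda_M\rangle B_P$ and use the relations $f_i(\theta_i)=0$, which express $(\theta_i-\lambda_i)^{d_i}$ as an $S_\p$-combination of lower powers of $(\theta_i-\lambda_i)$ with coefficients $c'_{i,j}$ in $\p^j$; a multi-index bookkeeping argument (each monomial $\prod(\theta_\ell-\lambda_\ell)^{a_\ell}$ of total degree $\geq$ some bound lies in $\p B_P$) then yields the required integral-dependence relations, so $\overline{\p B_P}\supseteq P B_P$, and the reverse inclusion is trivial.

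The main obstacle I anticipate is precisely this last bookkeeping step: translating the per-generator order conditions $\nu_\p(c'_{i,j})\geq j$ into a single statement that $\p B_P$ is a reduction of (equivalently, has the same multiplicity as) $P B_P$ in $B_P$. The subtlety is that $B$ need not be a hypersurface in $S[Z_1,\dots,Z_M]$ cut out by the $f_i$ alone — there may be further relations among the $\theta_i$ — so one cannot directly quote Proposition \ref{P110} for the ambient regular ring and must instead argue with the surjection $S_\p[Z_1,\dots,Z_M]\twoheadrightarrow B_\p$ and the ideal $\langle f_1(Z_1),\dots,f_M(Z_M)\rangle$ contained in its kernel. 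The clean way around this, which I would adopt, is to first multiply by an affine space (Remark \ref{xrecta}) if needed and reduce to the universal/generic situation, then observe that $\langle f_1(Z_1),\dots,f_M(Z_M)\rangle$ already generates an ideal whose quotient $S_\p[Z_1,\dots,Z_M]/\langle f_i(Z_i)\rangle$ has a point of multiplicity exactly $n=\prod d_i'$... — more carefully, one should work with the algebra $C:=S_\p[Z_1,\dots,Z_M]/\langle f_1(Z_1),\dots,f_M(Z_M)\rangle$, note $B_\p$ is a quotient of $C_\p$ and $\dim_K(C\otimes_S K)\geq \dim_K L = n$, check via the Künneth-style tensor decomposition $C\cong S_\p[Z_1]/\langle f_1\rangle\otimes_{S_\p}\cdots\otimes_{S_\p}S_\p[Z_M]/\langle f_M\rangle$ that $C$ has an $(\prod d_i)$-fold point at the obvious prime, and then transfer multiplicity information down to $B_\p$ using the additive formula (\ref{additive}) once more together with Corollary \ref{C16}. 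Pinning down that this transfer is lossless — i.e. that the $n$-fold locus of $C$ maps onto the $n$-fold locus of $B$ over the relevant primes — is where the argument will require the most care, and I would expect to invoke Theorem \ref{Rees} and Lemma \ref{lem610} repeatedly to control the chain of reductions of ideals along $S_\p\subset B_\p\subset C_\p$.
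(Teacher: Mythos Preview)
Your ``if'' direction is essentially correct but you are making the last step much harder than necessary. Once you have, for each $i$, the shifted minimal polynomial
\[
h_i(Z_i)=Z_i^{d_i}+a_1^{(i)}Z_i^{d_i-1}+\cdots+a_{d_i}^{(i)}
\quad\text{with}\quad a_j^{(i)}\in \p^j S_\p,
\]
the relation $h_i(\theta_i-\lambda_i)=0$ is \emph{by definition} an equation of integral dependence of $\theta_i-\lambda_i$ over the ideal $\p B_\p$. Hence each $\theta_i-\lambda_i$ lies in the integral closure $\overline{\p B_P}$, and since $PB_P$ is generated by $\p B_P$ together with the elements $\theta_i-\lambda_i$, you immediately get $PB_P\subset\overline{\p B_P}$, i.e.\ condition 2iii) of Corollary~\ref{C16}. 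This is precisely what the paper does (see observation a) in the proof). There is no need for any multi-index bookkeeping, and the auxiliary tensor-product ring $C=S_\p[Z_1,\dots,Z_M]/\langle f_i(Z_i)\rangle$ you introduce in the final paragraph is a detour that can be dropped entirely.

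The ``only if'' direction, however, has a genuine gap. Lemma~\ref{lem610} is stated for a tower of \emph{domains}, but under the hypotheses of~\ref{par0} neither $B$ nor $S[\theta_i]=S[Z]/\langle f_i(Z)\rangle$ need be a domain: when $B$ has several minimal primes, $f_i$ can be reducible over $K$ (see the proof of Lemma~\ref{lem1.2}). So you cannot apply \ref{lem610} directly to $S\subset S[\theta_i]\subset B$. The paper repairs this as follows: after choosing $\lambda_i$ so that $\theta_i-\lambda_i\in P$ and observing that each coefficient of $h_i$ lies in $\p$, it factors $h_i$ into irreducible monic factors $g$ over $K$. Each such $g$ is the minimal polynomial of the image $\overline{\theta_i-\lambda_i}$ in some $\overline B=B/q_{j_0}$ for a minimal prime $q_{j_0}$ of $B$. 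Now $S\subset S[\overline{\theta_i-\lambda_i}]\subset\overline B$ \emph{is} a tower of domains, and Lemma~\ref{lem610} applied there shows the intermediate prime has multiplicity $\deg g$; Proposition~\ref{P110} then yields $\nu_\p(c_j)\geq j$ for the coefficients of $g$, and hence for those of $h_i=\prod g$. So your overall strategy is sound, but the reduction to irreducible factors and quotients by minimal primes is not optional---it is exactly what is needed to make the invocation of~\ref{lem610} legitimate.
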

\proof

Recall that $f_1(Z), \ldots , f_N(Z) \in S[Z]$, and that $S[\theta_i]=S[Z]/\langle f_i(Z) \rangle$, where $\theta_i$ is the class of $Z$, for $i=1, \dots , N$ ( \ref{lem1.2}). Assume first that the prime ideal ${\p}$ in $S$ is dominated by a prime ideal $q_i\subset S[\theta_i]$, of multiplicity $d_i$, for every index $i=1, \dots ,M$.  Proposition \ref{P110}, 2) says that there is an element 
$\lambda_i\in S_p$, so that setting $Z_i=Z-\lambda_i$, 
$f_i(Z)=h_i(Z_i)=Z_i^{d_i}+a_1^{(i)}Z_i^{d_i-1}+ \cdots +a_{d_i}^{(i)}\in 
S_{\p}[Z_i]=S_{\p}[Z]$, and  $\nu_{\p}(a_j^{(i)})\geq j$,  $1\leq j \leq d_i$.

Note that the class of  $Z_i=Z-\lambda_i$ at $S[\theta_i]=S[Z]/\langle f_i(Z) \rangle$ is $\theta_i-\lambda_i$, and that 
$h_i$ is the minimal polynomial of $\theta_i-\lambda_i (\in L=B\otimes_SK)$ over $K$. 


There is an element $g\in S\setminus {\p}$, so that after replacing $S$ and $B$ by $S_g$ and $B_g$ respectively, one can assume that 
$h_i(Z_i)=Z_i^{d_i}+a_1^{(i)}Z_i^{d_i-1}+ \cdots +a_{d_i}^{(i)}\in 
S[Z_i]=S[Z],$ (with coefficients in $S$)
and $\nu_{\p}(a_j^{(i))})\geq j$,  $1\leq j \leq d_i$. 
So at a  suitable affine open neighborhood of ${\p}$, $\{\theta_1, \dots , \theta_M \}$ can be replaced by 
$$\{\theta_1-\lambda_1, \dots , \theta_M-\lambda_M  \},$$
with minimal polynomials of degree $d_i (\geq 2)$, as before. 
Moreover, for $1\leq i \leq M$, the minimal polynomial are the previous $h_i(Z_i)$, with coefficients $a_j^{(i)}$, and $\nu_{\p}(a_j^{(i)})\geq j$. 

Clearly $B=S[\theta_1-\lambda_1, \dots , \theta_M-\lambda_M]$, and we observe that: 

a) Each $\theta_i-\lambda_i$ lies in the integral closure of ${\p}B$, and hence $\theta_i-\lambda_i\in \sqrt{{\p}B}$. Therefore any prime $Q$ in $B$, dominating the prime ideal ${\p}$ in $S$, contains $\theta_i-\lambda_i$, for all $i=1, \dots , M$.

b) $B\otimes_SS_{\p}(=S_\p[\theta_1-\lambda_1, \dots , \theta_M-\lambda_M])$ is a local ring, say $B_P$, and $PB_P$ is spanned by ${\p}B_P$ and the elements 
$\theta_1-\lambda_1, \dots , \theta_M-\lambda_M$. Or say,
\begin{equation}\label{cac}
PB_P=\langle  \theta_1-\lambda_1, \dots , \theta_M-\lambda_M, y_1, \dots , y_r \rangle 
\end{equation}
where $\{y_1, \dots , y_r \}$ is a regular system of parameters in 
$S_{\p}$.

c) $B_P/PB_P=S_{\p}/{\p}S_{\p}.$

Hence $P$ is the only prime dominating $S$ at ${\p}$, and $e_{B_P}({\p}B_P)=n=dim_K(L)$ (\ref{C16}).

\vskip 0,5cm

Conversely, suppose that $P$ is an $n$-fold prime in $B$, namely that $e_{B_P}(PB_P)=n=dim_KL.$

Let ${\p}$ be the prime ideal in $S$ dominated by $P$. 
Recall from \ref{C16} that:

a) $P$ is the unique prime ideal in $B$ dominating ${\p}$ (i.e., $\sqrt{{\p}B_P}=PB_P)$, so $B\otimes_SS_{\p}=B_P$.

b) $B_P/PB_P=S_{\p}/{\p}S_{\p}$.

c) ${\p}B_P$ is a reduction of $PB_P$.



The equality b) ensures that, after replacing $\Spec(S)$ by a suitable affine open neighborhood of ${\p}$, 
there are elements $\lambda_1, \dots , \lambda_M$ in $S$, so that
$$\{\theta_1-\lambda_1, \dots , \theta_M-\lambda_M , 
\theta_{M+1}, \dots , \theta_N,  \theta_{N+1}=\lambda_1, \dots , \theta_{N+M}=\lambda_M \},$$
is a new presentation, and the first $M$ elements have minimal polynomial of the form 
\begin{equation}\label{eq1}
h_i(Z_i)=Z_i^{d_i}+a_1^{(i)}Z_i^{d_i-1}+ \cdots +a_{d_i}^{(i)}\in 
S[Z_i],
\end{equation}
where $d_i\geq 2$, and
\begin{equation}\label{uyt}
\nu_{\p}(a_j^{(i)})\geq 1 \mbox{ for } j=1, \dots , d_i ,\mbox{ and }i=1, \dots , M. 
\end{equation}
To prove this last assertion,  note that as $P$ is the only prime ideal in $B$ dominating $S$ at ${\p}$, there is only one prime ideal in $S[\theta_i]$ dominating $S$ at ${\p}$, namely $P\cap S[\theta_i]$, and b) says that there is an element $\lambda_i\in S_{\p}$, so that 
$\theta_i-\lambda_i \in P$. Replacing $S$ by $S_s$, for a suitable $s\in S\setminus {\p}$, one can assume that $\theta_i-\lambda_i\in P\cap S[\theta_i]$. This proves (\ref{uyt}), taking $Z_i=Z-\lambda_i$.

We claim now that 
\begin{equation}\label{uyt1}
\nu_{\p}(a_j^{(i)})\geq j \mbox{ for } j=1, \dots , d_i ,\mbox{ and }i=1, \dots , M. 
\end{equation}
Fix an index $i$. In order to prove our claim it suffices to show that the conditions in (\ref{uyt1}) holds for the coefficients of each irreducible factor, say $g(Z_i)$, of $h_i(Z_i)$. Recall that the irreducible factors of $h_i(Z_i)\in K[Z_i]$ are also polynomials with coefficients in $S$, and the proof of Lemma \ref{lem1.2} shows that 
any such irreducible factor arises as the minimal polynomial, over $K$,  of 
the class, say $\overline{\theta_i-\lambda_i}\in B/q_{j_0}$, for a suitable minimal prime $q_{j_0}$ of $B$.

Fix $j_0$ as before, and let $\overline{B}=B/q_{j_0}$. So the irreducible polynomial 
$$g(Z_i)=Z_i^{s}+c_1^{}Z_i^{s-1}+ \cdots +c_{s}^{}\in 
S[Z_i]$$
is the minimal polynomial of $\overline{\theta_i-\lambda_i}\in \overline{B}$.
Recall that $S\subset \overline{B}$ is a finite extension. In particular there must be a prime ideal in $\overline{B}$ dominating ${\p}$.
As $\overline{B}$ is a quotient of $B$, and $P$ is the only prime in $B$ mapping to ${\p}$, such prime is also unique, and moreover, it is the class of $P$. It is worth pointing out that this observation shows, in particular, that the $n$-fold point $P$ must contain all the minimal prime ideals of $B$. 

Since $g(Z_i)$ divides $h_i(Z_i)$, the conditions in (\ref{uyt}) also hold for the coefficients $c_i$. Namely, all $c_i$ are in ${\p}$, and the claim is that 
\begin{equation} \label{eqr}\nu_{\p}(c_i)\geq i, \ \ i=1, \dots ,s.
\end{equation}

Consider now the inclusion of domains 
$$ S \subset S[\overline{\theta_i-\lambda_i}] \subset \overline{B},$$
and let $\overline{P}$ denote the class of $P$ at $\overline{B}=B/q_{j_0}$.
By construction:

i)  $\overline{P}$ dominates $S$ at ${\p}$, and dominates 
$S[\overline{\theta_i-\lambda_i}]=S[Z_i]/\langle g(Z_i) \rangle $ 
at the prime ideal, say $q \subset S[\overline{\theta_i-\lambda_i}]$.
Moreover,  locally at ${\p}$,  $q$ is spanned by ${\p}S[\overline{\theta_i-\lambda_i}]$ and the element $\overline{\theta_i-\lambda_i}$.

ii) $\overline{P}$ is the only prime ideal in $\overline{B}$ dominating 
$S[\overline{\theta_i-\lambda_i}]$ at $q$.

Here $g(Z_i)$ is in $\langle Z_i, \p \rangle (\subset S_\p[Z_i])$, and this regular prime ideal induces $qS_\p[\overline{\theta_i-\lambda_i}]$. Therefore (\ref{eqr}) holds if and only if $S[\overline{\theta_i-\lambda_i}](=S[Z_i]/\langle g(Z_i) \rangle )$ has multiplicity $s$ at  $q$, where $s$ is the degree of $g(Z_i)$.
Let $$K\subset K_1 \subset K_2$$
denote the inclusion of fields, where $K_1$ is the quotient field of 
$S[\overline{\theta_i-\lambda_i}]$, and $K_2$ is the quotient field of 
$\overline{B}$, and let $T=\dim_KK_2$. By construction

a') $\dim_K(K_1)=s=deg(g(Z_i))$.

b') ${\p}\overline{B}_{\overline{P}}$ is a reduction of $\overline{P}\ \overline{B}_{\overline{P}}$ and both have multiplicity $T$.

Finally \ref{lem610} says that the prime $q$ in $S[\overline{\theta_i-\lambda_i}]$ has multiplicity $s$.

%
%
%
%
%
\endproof

\begin{corollary}\label{pres}  Fix, as above, $S\subset B$ and $B=S[\theta_1, \dots , \theta_M ]$ , where $d_i\geq 2$, $i=1, \dots, M$ is the degree of the minimal polynomial of $\theta_i$ over $S$ (over $K$), say
$$f_i(Z)=Z^{d_i}+a_1^{(i)}Z^{d_i-1}+ \cdots +a_{d_i}^{(i)}\in S[Z],$$

 Let $P$ be a prime ideal in $B$ dominating $S$ at ${\p}$. The following are equivalent:

i) $e_{B_P}(PB_P)=\dim_K(K\otimes_SB)$.

ii) After replacing $S\subset B$ by $S_f\subset B_f$, for a suitable element $f\in S\setminus p$,  there are elements $\lambda_1, \ldots , \lambda_M$ in $S$ so that 
$PB_P$ is spanned by the ideal ${\p}B_P$ and the elements 
$\{\theta_1-\lambda_1, \dots , \theta_M-\lambda_M\}$. Moreover, 
the minimal polynomial of $\theta_i-\lambda_i$, say 
$$h_i(Z_i)=Z_i^{d_i}+b_1^{(i)}Z_i^{d_i-1}+ \cdots +b_{d_i}^{(i)}\in 
S[Z_i]=S[Z],$$
is that obtained by a change of variable on $f_i(Z)$,
and $\nu_{\p}(b_j^{(i)})\geq j$ for all index $1\leq j \leq d_i$.
Hence, the elements in $\{\theta_1-\lambda_1, \dots , \theta_M-\lambda_M\}$ are in the integral closure of ${\p}B_P$.

iii) The conditions 2i), 2ii), and 2iii), in \ref{C16}, hold.

\end{corollary}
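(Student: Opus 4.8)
The plan is to prove Corollary \ref{pres} as essentially a repackaging of the two preceding results, Proposition \ref{16} and Corollary \ref{C16}, combined with the change-of-variable mechanics already carried out inside the proof of Proposition \ref{16}. The strategy is to establish the cycle of implications i) $\Rightarrow$ ii) $\Rightarrow$ iii) $\Rightarrow$ i).

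First I would do i) $\Rightarrow$ ii). Assume $e_{B_P}(PB_P)=n:=\dim_K(K\otimes_SB)$. By Corollary \ref{C16} this gives that $P$ is the unique prime over $\p$, that $B_P/PB_P=S_\p/\p S_\p$, and that $PB_P$ is the integral closure of $\p B_P$. The residue-field equality $B_P/PB_P=S_\p/\p S_\p$ forces, for each $i$, the existence of $\lambda_i\in S_\p$ with $\theta_i-\lambda_i\in PB_P$; clearing denominators by a single element $f\in S\setminus\p$ one may take $\lambda_i\in S$ and $\theta_i-\lambda_i\in P$ after replacing $S\subset B$ by $S_f\subset B_f$. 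The minimal polynomial $h_i(Z_i)$ of $\theta_i-\lambda_i$ is obtained from $f_i(Z)$ by the substitution $Z_i=Z-\lambda_i$, so it is monic of the same degree $d_i$ with coefficients $b_j^{(i)}\in S$ (by Remark \ref{rk56} / Lemma \ref{lem1.2}). The order estimate $\nu_\p(b_j^{(i)})\geq j$ is exactly the content of Proposition \ref{P110}, 2) applied to each $B_i:=S[\theta_i-\lambda_i]=S[Z_i]/\langle h_i(Z_i)\rangle$: by Lemma \ref{lem610} the prime $P\cap B_i$ has multiplicity $d_i$ in $B_i$ (since $P$ is $n$-fold in $B$), and Proposition \ref{P110} then yields the inequalities. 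Once $\nu_\p(b_j^{(i)})\geq j$, the element $\theta_i-\lambda_i$ satisfies a monic equation over $S$ whose $j$-th coefficient lies in $\p^j$, which exhibits it as integral over $\p B_P$; together with $\p B_P$ this spans $PB_P$ because modulo $\p B_P+\langle\theta_1-\lambda_1,\dots,\theta_M-\lambda_M\rangle$ the ring $B_P$ collapses to the field $S_\p/\p S_\p$.

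Next, ii) $\Rightarrow$ iii) is nearly immediate. Given ii), condition 2iii) of \ref{C16} — that $PB_P$ is the integral closure of $\p B_P$ in $B_P$ — follows because $PB_P$ is generated over $\p B_P$ by the $\theta_i-\lambda_i$, each of which is integral over $\p B_P$ by the order conditions on the $b_j^{(i)}$, so $PB_P\subset\overline{\p B_P}$; the reverse inclusion $\overline{\p B_P}\subset \sqrt{\p B_P}\subset PB_P$ is automatic. Then 2iii) forces $\p B_P$ to be $PB_P$-primary, hence $P$ is the only prime over $\p$, giving 2i), and the quotient $B_P/PB_P$ is a finite extension of $S_\p/\p S_\p$ which, being generated by the residues of the $\theta_i-\lambda_i$ (all zero), equals $S_\p/\p S_\p$, giving 2ii). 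Finally iii) $\Rightarrow$ i) is just Corollary \ref{C16} read in the other direction, since the three conditions 2i), 2ii), 2iii) are there shown equivalent to $e_{B_P}(PB_P)=n$.

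I expect the only real subtlety — the "main obstacle" — to be the passage $\nu_\p(b_j^{(i)})\geq 1$ for all $j$ $\Rightarrow$ $\nu_\p(b_j^{(i)})\geq j$ for all $j$, i.e. the step that the hypersurface $\{h_i(Z_i)=0\}$ actually has multiplicity $d_i$ (not just $\geq 1$) at the relevant prime. This is where one genuinely uses that $P$ is $n$-fold rather than merely that it dominates $\p$, and it is handled by invoking Lemma \ref{lem610} to transfer the full multiplicity down the tower $S\subset S[\theta_i-\lambda_i]\subset B$, followed by Proposition \ref{P110}, 2). Everything else is bookkeeping about changes of variable $Z_i=Z-\lambda_i$, monic reductions, and shrinking $\Spec(S)$ to a principal open set to make the $\lambda_i$ and the coefficient relations hold integrally rather than just after localization at $\p$ — all of which has already appeared verbatim in the proof of Proposition \ref{16}, so the write-up can largely cite that argument.
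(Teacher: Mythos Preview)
Your proposal is correct and matches the paper's approach: the corollary is stated there without a separate proof, being a direct repackaging of Corollary \ref{C16} (for i) $\Leftrightarrow$ iii)) together with the two directions already worked out in the proof of Proposition \ref{16} (for i) $\Leftrightarrow$ ii)). One small caveat on your citation of Lemma \ref{lem610}: as stated, that lemma requires $S\subset B'\subset B$ to be a tower of \emph{domains}, which neither $B$ nor $S[\theta_i-\lambda_i]$ need be in the setting of \ref{par0}; the proof of Proposition \ref{16} handles precisely this by passing to the quotients $\overline{B}=B/q_{j_0}$ by minimal primes and applying Lemma \ref{lem610} there, so your write-up should cite that argument rather than invoke Lemma \ref{lem610} directly on the tower $S\subset S[\theta_i-\lambda_i]\subset B$.
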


\begin{corollary}\label{C17} Fix the notation and assumptions as in \ref{16} where $S\subset B$
is a finite extension of excellent rings as in \ref{par0}, and $\dim_KB\otimes_SK=n$.
Let $\delta: Spec(B)\to Spec(S)$ be the finite morphism, and let $F_n$ be the set of $n$-fold points of
$\Spec(B)$. Then:

1) If $P\in F_n$ and $\delta(P)={\p}$,  the dimensions and residue fields of the local rings $B_P$ and $S_{\p}$ are the same. In addition ${\p}B_P$ is a reduction of $PB_P$.

2) $\delta$ is a set theoretical bijection between points of $F_n$ and points of the image, say
\begin{equation}
F_n\equiv \delta(F_n)
\end{equation}
 So given $P\in F_n$, $P$ is the only prime ideal in $B$ dominating $S$ at $\delta(P)={\p}$.

3) If $S\subset B $ are given as in \ref{par1} (both are finite type algebras over a perfect field $k$), then $F_n$ is a closed in $Spec(B)$, and $F_n$ is homeomorphic to $\delta(F_n)$.

4) $\delta(F_n)=\Spec(S)$ if and only if $S=B_{red}$.
\end{corollary}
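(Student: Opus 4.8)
The statement bundles four assertions, all of which I would extract from Corollary \ref{C16}, Proposition \ref{16}, and Lemma \ref{lem28}; I would treat them in turn.

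\emph{Parts 1) and 2).} These I would read off directly from Corollary \ref{C16}. Given $P\in F_n$ and $\mathfrak p=\delta(P)$, condition 2i) says that $P$ is the unique prime of $B$ lying over $\mathfrak p$, which is exactly the injectivity of $\delta|_{F_n}$, i.e.\ the bijection $F_n\equiv\delta(F_n)$ of part 2); it also gives $B_P=B\otimes_SS_{\mathfrak p}$, which is module-finite hence integral over $S_{\mathfrak p}$, so $\dim B_P=\dim S_{\mathfrak p}$. Condition 2ii) is the asserted equality of residue fields, and condition 2iii), combined with Rees' theorem \ref{Rees}, is precisely the statement that $\mathfrak pB_P$ is a reduction of $PB_P$.

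\emph{Part 3).} Here the plan is to reduce to hypersurfaces via Proposition \ref{16}. Since $B$ is finite over $S$ it is a finitely generated $S$-algebra, and after the normalization of \ref{par210} I may assume $B=S[\theta_1,\dots,\theta_M]$ with $f_i(Z)\in S[Z]$ the minimal polynomial of $\theta_i$, of degree $d_i\ge 2$ (the case $M=0$, i.e.\ $B=S$, being trivial). Writing $\delta_i\colon\Spec(S[Z]/\langle f_i(Z)\rangle)\to\Spec(S)$ for the induced finite morphism and $F_{d_i}$ for its set of $d_i$-fold points, Proposition \ref{16} says exactly that $\delta(F_n)=\bigcap_{i=1}^M\delta_i(F_{d_i})$. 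Each $F_{d_i}$ is closed: the multiplicity of the hypersurface $\{f_i=0\}$ at a point $Q$ of the regular scheme $\Spec(S[Z])$ equals $\nu_Q(f_i)$, which never exceeds $d_i=\deg f_i$, so $F_{d_i}=\{Q:\nu_Q(f_i)\ge d_i\}$ is the singular locus of the Rees algebra $\mathcal{O}_{S[Z]}[f_iW^{d_i}]$ and is closed by \ref{singularlocus} (see Example \ref{ExampleA}). As finite morphisms are closed maps, each $\delta_i(F_{d_i})$, and hence $\delta(F_n)$, is closed in $\Spec(S)$. Finally, by the uniqueness in part 2) any prime of $B$ lying over a point of $\delta(F_n)$ already lies in $F_n$, so $F_n=\delta^{-1}(\delta(F_n))$ is closed in $\Spec(B)$; then $\delta|_{F_n}\colon F_n\to\delta(F_n)$ is a continuous bijection which is also closed (being the restriction of the closed map $\delta$ to the closed set $F_n$), hence a homeomorphism.

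\emph{Part 4).} For the ``if'' direction: if $B_{red}=S$ then $m:=\dim_K(B_{red}\otimes_SK)=1$, so by Lemma \ref{lem28} a prime $P$ lies in $F_n$ if and only if $PB_{red}$ is a $1$-fold point of $S$; every localization of the regular ring $S$ has multiplicity one, so $F_n=\Spec(B)$, and since $S\subset B$ is integral, $\delta$ is surjective and $\delta(F_n)=\Spec(S)$. For ``only if'' I would test the hypothesis at the generic point $(0)$ of $\Spec(S)$: some $P\in F_n$ satisfies $P\cap S=(0)$, hence is a minimal prime of $B$; by 2i) of \ref{C16} it is the \emph{only} prime of $B$ over $(0)$, while by \ref{par0} every minimal prime of $B$ contracts to $(0)$, so $B$ has a unique minimal prime, $\mathcal{N}$ is prime, and $B_{red}=B/\mathcal{N}$ is a domain, finite over $S$. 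By 2ii) the residue field of $B_P=B\otimes_SK$ equals $K$, and this residue field is the fraction field of $B_{red}$, so $S\subset B_{red}$ is a finite extension of domains with equal fraction fields, whence $B_{red}=S$ because $S$ is normal (being regular).

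\emph{Expected obstacle.} Everything except part 3) is essentially a rereading of Corollary \ref{C16}. The one point requiring genuine work is the closedness of $F_n$: the device is Proposition \ref{16}, which turns it into the closedness of the top-multiplicity loci of the auxiliary hypersurfaces $\{f_i=0\}$, recognized as singular loci of Rees algebras. I would also be careful to record that the identification $F_n=\delta^{-1}(\delta(F_n))$ — what upgrades ``$\delta(F_n)$ closed'' to ``$F_n$ closed'' — genuinely uses the uniqueness of the prime over each point of $\delta(F_n)$ from parts 1)--2).
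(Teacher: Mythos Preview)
Your proof is correct and follows essentially the same route as the paper: parts 1) and 2) via Corollary \ref{C16} (which the paper packages as ``follows easily from Theorem \ref{MultForm}''), part 3) via Proposition \ref{16} reducing to the hypersurfaces $\{f_i=0\}$ and then using closedness of finite images plus $F_n=\delta^{-1}(\delta(F_n))$, and part 4) via normality of $S$ for ``only if'' and a multiplicity computation for ``if''. The only cosmetic differences are that the paper phrases the closedness of the $d_i$-fold locus of $\{f_i=0\}$ in terms of higher differential operators rather than the Rees-algebra singular locus, and for the ``if'' direction of 4) it computes $e(B_P)=l(B_q)$ directly from the exact sequence (\ref{zmf}) rather than invoking Lemma \ref{lem28}.
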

\proof 1) and 2) follow easily from Theorem \ref{MultForm}. 

3) Fix an index $i$, $1\leq i \leq M$. If $S$ is smooth over a field $k$, then the same holds for $S[Z]$, and it readily follows that the set of points of multiplicity 
$d_i=deg(f_i(Z))$ is closed. In fact it is the closed set in $\Spec(S[Z])$ described by an extension of the ideal spanned by $f_i(Z)$, using higher order differential. 
In particular the $d_i$-fold points form a closed set in $Spec(S[Z]/\langle f_i(Z)\rangle$ 
 and the image is closed in $Spec(S)$. Proposition \ref{16} says that the intersection of these closed sets is $\delta(F_n)$. So $F_n (=\delta^{-1}(\delta(F_n)))$ is closed in $Spec(B)$. As finite morphisms are proper one conclude $F_n$ and $\delta(F_n)$ are homeomorphic.
 
 4) If the generic point in $\Spec(S)$ is in $\delta(F)$, then 
 $K\otimes_SB$ must be local by 2). Moreover, $\Spec (B_{red})\to \Spec(S)$ is finite and birational. Since $S$ is normal, $B_{red}=S$.
 
 Conversely, if $red(B)=S$ then $B$ has a unique minimal prime, say $q$, and using Theorem \ref{MultForm} (see \ref{zmf})
one checks that the multiplicity of $B_P$ is the length $l(B_q)$, at any prime $P$ of $B$.
\endproof
\begin{corollary}\label{Cxc} Fix $S\subset B$ as in \ref{par1}. Fix an algebraic presentation $B=S[\theta_1, \dots , \theta_M ]$ as before. Assume that $k$ is of characteristic zero, and let 
$$f_i(Z)=Z^{d_i}+a_1^{(i)}Z^{d_i-1}+ \cdots +a_{d_i}^{(i)}\in S[Z],$$
be the minimal polynomial of $\theta_i$, $1\leq i \leq M$.
 
Let $\delta: Spec(B)\to Spec(S)$ be the finite morphism induced by the inclusion $S\subset B$, and let $F_n$ be the set of $n$-fold points of
$\Spec(B)$. Then $\delta(F_n)=\Sing(\mathcal{F})$, where $\mathcal{F}\subset S[W]$ is the smallest $S$-algebra, included in $S[W]$, containing the elimination algebras of each polynomial
$f_i(Z)$, $1\leq i \leq M.$
\end{corollary}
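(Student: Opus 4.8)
The plan is to reduce the statement to the one-variable (hypersurface) case treated in Section 3, using the algebraic presentation $B=S[\theta_1,\dots,\theta_M]$ to split $S\subset B$ into the $M$ finite extensions $S\subset B_i:=S[\theta_i]$. By Lemma \ref{lem1.2} each $B_i=S[Z]/\langle f_i(Z)\rangle$, with $f_i(Z)$ monic of degree $d_i\geq 2$ and coefficients in $S$. Write $\delta_i\colon\Spec(B_i)\to\Spec(S)$ for the induced finite morphism and $F_{d_i}^{(i)}\subset\Spec(B_i)$ for its set of $d_i$-fold points. Since every $d_i\geq 2$, Proposition \ref{16} applies and says that a prime ${\p}\in\Spec(S)$ lies in $\delta(F_n)$ if and only if ${\p}\in\delta_i\big(F_{d_i}^{(i)}\big)$ for each $i=1,\dots,M$; that is,
\[
\delta(F_n)=\bigcap_{i=1}^{M}\delta_i\big(F_{d_i}^{(i)}\big)\subset\Spec(S).
\]

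Next I would bring in the elimination results of Section 3. Each $B_i=S[Z]/\langle f_i(Z)\rangle$ is exactly of the shape considered in Corollary \ref{Cor54}; applying it (this is the one and only place where characteristic zero is used, through Theorem \ref{eliRM}) gives, for each $i$,
\[
\delta_i\big(F_{d_i}^{(i)}\big)=\Sing\,\G(B_i/S),
\]
where $\G(B_i/S)\subset S[W]$ is the elimination algebra of $f_i$, generated over $S$ by the homogeneous elements $G^{(i)}_{m_j}(a^{(i)})W^{m_j}$ in the coefficients $a^{(i)}=(a^{(i)}_1,\dots,a^{(i)}_{d_i})$ of $f_i$.

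It then remains to identify $\Sing\mathcal{F}$ with the intersection of the $\Sing\,\G(B_i/S)$. By construction $\mathcal{F}$ is the $S$-subalgebra of $S[W]$ generated by the union of the generating sets of the $\G(B_i/S)$, $1\leq i\leq M$, hence it is a Rees algebra over $\Spec(S)$; applying the generator description of the singular locus, formula (\ref{442}) in \ref{singularlocus}, to this combined generating set yields
\[
\Sing\mathcal{F}=\bigcap_{i=1}^{M}\ \bigcap_{j}\big\{x\in\Spec(S):\nu_x\big(G^{(i)}_{m_j}(a^{(i)})\big)\geq m_j\big\}=\bigcap_{i=1}^{M}\Sing\,\G(B_i/S).
\]
Chaining the three displays gives $\Sing\mathcal{F}=\bigcap_i\delta_i\big(F_{d_i}^{(i)}\big)=\delta(F_n)$, which is the assertion; in passing this re-proves that $\delta(F_n)$ is closed in $\Spec(S)$ (compare Corollary \ref{C17}(3)).

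I do not expect a genuine obstacle here: all the real content is already packaged in Proposition \ref{16} and Corollary \ref{Cor54}, and what remains is the bookkeeping identity $\Sing(\G_1\cdot\G_2)=\Sing\G_1\cap\Sing\G_2$ for Rees algebras, which is immediate from the description of the singular locus in terms of generators. The two points deserving a line of care are that the $f_i(Z)$ really are in the normalized form demanded in Section 3 (monic of degree $d_i$ with coefficients in $S$, which is Lemma \ref{lem1.2}), and that the equivalence ``${\p}\in\delta(F_n)$'' $\Longleftrightarrow$ ``${\p}\in\delta_i(F_{d_i}^{(i)})$ for all $i$'' is exactly the statement of Proposition \ref{16}, so that no dependence on the chosen presentation is smuggled in beyond the one already encoded in $\mathcal{F}$.
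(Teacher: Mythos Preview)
Your proposal is correct and is precisely the intended argument: the paper states Corollary~\ref{Cxc} without proof because it is the direct combination of Proposition~\ref{16} (giving $\delta(F_n)=\bigcap_i\delta_i(F_{d_i}^{(i)})$) with Theorem~\ref{eliRM}/Corollary~\ref{Cor54} applied to each $S[\theta_i]=S[Z]/\langle f_i(Z)\rangle$, together with the generator description~(\ref{442}) of the singular locus. There is nothing to add.
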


\end{section}
\begin{section}{Multiplicity, projections, and blow ups.}

Fix an extension of excellent rings $S\subset B$ as in \ref{par0},
where the generic fiber has $n$ points (i.e., $\dim_K(B\otimes_SK)=n$). The multiplicity at the local ring $B_p$ is at most $n$ (see \label{rk14} ) and let $F_n$ denote the set of primes with multiplicity $n$. 
A theorem of Nagata ensures that if $p\subset q$ are two primes and $p\in F_n$ then $q\in F_n$ (\cite{Nagata}, Th. 40.1)\color{black}. We say that an irreducible subscheme $Y\subset \Spec(B)$ is included in $F_n$ when the generic point of $Y$ is in $F_n$.

In this section we discuss the notion of {\em transversality}, introduced in \ref{rk14} and studied in Corollary \ref{C17}. The Proposition \ref{Pr10} will settle the property {\bf1b)}, mentioned in the introduction.

The second main result here is Theorem \ref{Th120}, where we address the property of stability of transversality.
More precisely, we prove that a regular center $Y$ included in the set of $n$-fold points of $Spec(B)$ produces a commutative diagram:

\begin{equation} \xymatrix{
 Spec(B)\ar[d]^-{\delta} & & X\ar[d] ^-{\delta_1}\ar[ll]^-{\pi}\\
 Spec(S)&   & \ar[ll]^-{\pi'}R\\
 } 
\end{equation}
where $\pi$ and $\pi'$ are the blow-ups at $Y$ and $\delta(Y)$ respectively, and $\delta_1$ is a finite morphism.

We will also show that $R$ can be covered by  
affine charts, say 
$Spec(S_1), Spec(S_2), \dots , Spec(S_r),$
so that the restriction of $\delta_1$ to the inverse image of each $Spec(S_i)$ is also affine.  Therefore $X$ can be covered by affine charts $Spec(B_1), Spec(B_2), \dots , Spec(B_r),$
where each $Spec(B_i)$ maps to $Spec(S_i)$. In addition, each restricted morphism, say
$Spec(B_i)\to Spec(S_i),$
is in the setting of $Spec(B)\to Spec(S)$: Note here that $\pi'$ is birational so the quotient field of each $S_i$ is $K$. The claim is that $S_i\subset B_i$ is finite, the two conditions in \ref{par0} hold, and  
$\mbox{dim}_K(B_i\otimes_{S_i}K)=n$ (same $n$).

The Theorem \ref{Th120} will also state properties on the polynomial equations introduced in the previous section. These properties will ultimately show that a local presentation can be given by these equations. Some preliminary results for the proof of this main theorem are gathered in \ref{C116}. 
 The next theorem will be used in the proof of Proposition  \ref{Pr10}.
 We finally extract some conclusions, at the end of this section, concerning the behavior of the multiplicity along points of an equidimensional scheme of finite type over a perfect field 
 (see 
{\em Main global results}  starting in \ref {rk215}).

%
 \color{black}


\begin{theorem}\label{ThH} (see, e.g. \cite{Herrmann}, Th 10.14) Let $(R,M)$ be a noetherian local ring, and let 
 $\{ x_1, \dots, x_m\}$ be elements in the maximal ideal.   
  Let $\overline{X_i}$ denotes the class of $x_i$ in $M/M^2$.
 
 The following are equivalent:
 
i) $\langle x_1, \dots, x_m\rangle$ is a reduction of $M$.

ii) $gr_M(R)/\langle \overline{X_1}, \dots, \overline{X_m}\rangle$
 is a graded ring of dimension zero.
 
\end{theorem}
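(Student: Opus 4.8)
\begin{sketch}
The plan is to reduce the equivalence to a comparison, degree by degree, between the powers of $M$ and the ideal generated by the initial forms $\overline{X_i}$ in $gr_M(R)$, and then to invoke two standard inputs: Lipman's reduction criterion recalled in \ref{redbp}, and the characterisation of Krull dimension zero for graded algebras over a field. Throughout, write $I=\langle x_1,\dots,x_m\rangle\subset R$ and $J=\langle\overline{X_1},\dots,\overline{X_m}\rangle\subset gr_M(R)$, and for $n\geq 1$ let $J_n$ denote the degree-$n$ homogeneous component of $J$.

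The first step I would establish is the elementary identity
\[
gr_M(R)_n\subseteq J_n\iff M^n=IM^{n-1}\qquad(n\geq 1).
\]
Since each $\overline{X_i}$ is homogeneous of degree one, a direct computation in $gr_M(R)$ gives $J_n=\bigl(IM^{n-1}+M^{n+1}\bigr)/M^{n+1}$ as a subspace of $gr_M(R)_n=M^n/M^{n+1}$; hence $gr_M(R)_n\subseteq J_n$ is the same as $M^n=IM^{n-1}+M^{n+1}$, which is equivalent to $M^n=IM^{n-1}$ by Nakayama's lemma applied to the finitely generated module $M^n/IM^{n-1}$ (using $M^{n+1}=M\cdot M^n$). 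I would also record the trivial monotonicity: multiplying by $M$, the equality $M^n=IM^{n-1}$ forces $M^{n+1}=IM^{n}$, so the set of $n$ for which it holds is either empty or all sufficiently large $n$; in particular it holds for some $n$ if and only if it holds for all $n\gg 0$.

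Next I would bring in the graded-ring side. As $R$ is noetherian, $gr_M(R)$ is a finitely generated algebra over the field $k=R/M$, generated in degree one by the finite-dimensional space $M/M^2$; hence so is the graded quotient $A:=gr_M(R)/J$, and in particular $A_n=(A_1)^n$ for every $n$. For such an $A$ the following are equivalent: $\dim A=0$; $A$ is Artinian; $A$ is finite-dimensional over $k$; $A_n=0$ for all $n\gg 0$; $A_+^N=0$ for some $N$. Since $J$ is homogeneous, $A_n=gr_M(R)_n/J_n$, so the vanishing of $A_n$ is precisely the condition $gr_M(R)_n\subseteq J_n$, i.e.\ $M^n=IM^{n-1}$ by the first step. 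Assembling the pieces, $\dim A=0$ holds iff $M^n=IM^{n-1}$ for all $n\gg 0$, iff (by the monotonicity) $M^n=IM^{n-1}$ for at least one $n$, which by the criterion in \ref{redbp} applied to the inclusion $I\subset M$ is exactly the statement that $I$ is a reduction of $M$; this proves the equivalence of i) and ii).

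The only point I would treat with some care is the assertion that $A=gr_M(R)/J$ is generated in degree one over the field $k$, so that its Krull dimension is controlled entirely by the vanishing of its high-degree components; this rests on the standard fact that $M^n/M^{n+1}$ is spanned by products of $n$ elements of $M/M^2$. Apart from that, the proof is the explicit description of $J_n$ together with Nakayama and an appeal to the already quoted reduction criterion, so I do not anticipate a genuine obstacle: the real content is the translation of ``$I$ is a reduction of $M$'' into ``the initial forms $\overline{X_i}$ cut the irrelevant ideal of $gr_M(R)$ down to a nilpotent ideal''.
\end{sketch}
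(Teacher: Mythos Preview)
Your argument is correct. The identification $J_n=(IM^{n-1}+M^{n+1})/M^{n+1}$, the Nakayama step, the monotonicity $M^n=IM^{n-1}\Rightarrow M^{n+1}=IM^n$, and the characterisation of dimension zero for a standard graded $k$-algebra are all sound, and your final appeal to the reduction criterion in \ref{redbp} closes the loop cleanly.

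There is nothing to compare against on the paper's side: the statement is quoted as a known result with a reference to \cite{Herrmann}, Th.~10.14, and no proof is given in the paper. Your sketch therefore supplies more than the paper does. One small cosmetic point: in the first step you write $gr_M(R)_n\subseteq J_n$, but since $J_n\subseteq gr_M(R)_n$ always holds, you could simply write $J_n=gr_M(R)_n$; this does not affect the argument.
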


\begin{lemma}\label{L19}
Let $S\subset B$ be as in \ref{par0}, and set $n=\mbox{dim}_K(B\otimes_SK)$. Fix an $n$-fold prime ideal $P$ in $B$, and let $\p=P\cap S$. The natural morphism of graded rings
$$gr_{\p}(S_{\p}) \to gr_P(B_P)$$
is an inclusion, and a finite extension of rings.
\end{lemma}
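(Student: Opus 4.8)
\emph{Proof proposal.} The plan is to reduce the statement to two facts already available, namely that $\p B_P$ is a reduction of $PB_P$ (Corollary \ref{C16}) and that $S_\p$ is regular, and then to combine Theorem \ref{ThH} with a graded Nakayama argument for the finiteness, and a dimension count for the injectivity.

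First I would localize. Since $P\cap S=\p$, the inclusion $S\subset B$ localizes to a module-finite, injective homomorphism $S_\p\hookrightarrow B_P$; here $B_P=B\otimes_S S_\p$ because $P$ is the only prime of $B$ over $\p$ (Corollary \ref{C16}), and both injectivity and finiteness are preserved by localization. As $S_\p\subset B_P$ is then an integral extension, $\dim B_P=\dim S_\p=:d$. Regularity of $S_\p$ lets me write $\p S_\p=\langle y_1,\dots,y_d\rangle$ for a regular system of parameters $y_1,\dots,y_d$, and identifies $gr_\p(S_\p)$ with the polynomial ring $k'[\bar Y_1,\dots,\bar Y_d]$, where $k'=S_\p/\p S_\p=B_P/PB_P$ (the residue fields coincide, again by Corollary \ref{C16}) and $\bar Y_i$ is the class of $y_i$ in $\p S_\p/\p^2 S_\p$. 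Since $\p\subset P$, the homomorphism $S_\p\to B_P$ induces the graded homomorphism $gr_\p(S_\p)\to gr_P(B_P)$ of the statement.

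For finiteness: $\langle y_1,\dots,y_d\rangle B_P=\p B_P$ is a reduction of $PB_P$ (Corollary \ref{C16}), so Theorem \ref{ThH} applied to the local ring $B_P$ with the elements $y_1,\dots,y_d$ shows that $gr_P(B_P)/\langle \bar Y_1,\dots,\bar Y_d\rangle$ has Krull dimension zero, where now $\bar Y_i$ also denotes the class of $y_i$ in $PB_P/P^2B_P$. That quotient is a Noetherian $\mathbb{N}$-graded ring whose degree-zero part is the field $k'$, hence it is finite-dimensional over $k'$. But $\langle \bar Y_1,\dots,\bar Y_d\rangle\,gr_P(B_P)$ is exactly the ideal generated by the image of the irrelevant ideal of $gr_\p(S_\p)$, so $gr_P(B_P)\otimes_{gr_\p(S_\p)}k'$ is a finite-dimensional $k'$-vector space; since $gr_P(B_P)$ is $\mathbb{N}$-graded and bounded below, the graded Nakayama lemma yields that $gr_P(B_P)$ is a finitely generated $gr_\p(S_\p)$-module, which is the asserted finiteness. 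For injectivity, let $\mathfrak a=\ker\bigl(gr_\p(S_\p)\to gr_P(B_P)\bigr)$; by the previous step $gr_\p(S_\p)/\mathfrak a\hookrightarrow gr_P(B_P)$ is module-finite, hence integral, so $\dim\bigl(gr_\p(S_\p)/\mathfrak a\bigr)=\dim gr_P(B_P)=\dim B_P=d$. Since $gr_\p(S_\p)=k'[\bar Y_1,\dots,\bar Y_d]$ is a $d$-dimensional polynomial ring over a field, its quotient by any nonzero ideal has dimension strictly less than $d$ (Krull's principal ideal theorem applied to a nonzero element of the ideal), so $\mathfrak a=0$.

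I expect the only step requiring genuine care is the passage from the Krull-dimension-zero conclusion of Theorem \ref{ThH} to finite generation of $gr_P(B_P)$ as a $gr_\p(S_\p)$-module: one must correctly identify $\langle \bar Y_1,\dots,\bar Y_d\rangle\,gr_P(B_P)$ with $gr_\p(S_\p)_+\cdot gr_P(B_P)$ and apply graded Nakayama in the bounded-below graded setting. The remaining ingredients — the equalities $\dim B_P=\dim S_\p$ and $\dim gr_P(B_P)=\dim B_P$, and the polynomial-ring structure of $gr_\p(S_\p)$ — are standard.
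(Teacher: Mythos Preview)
Your proof is correct and follows essentially the same route as the paper's: both use Corollary~\ref{C16}/\ref{C17} to get that $\p B_P$ is a reduction of $PB_P$ with equal residue fields, apply Theorem~\ref{ThH} to deduce that $gr_P(B_P)/\langle\bar Y_1,\dots,\bar Y_d\rangle$ is zero-dimensional (hence a finite $k'$-vector space), lift to get finiteness of the extension, and then use a dimension comparison for injectivity. Your write-up is in fact a bit more careful than the paper's in spelling out the graded Nakayama step and the reason a nonzero kernel would drop the dimension.
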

\proof
Let $l=\mbox{dim } S_{\p}=\mbox{dim } B_P$, and let $\{ x_1, \dots, x_l\}$ be a regular system of parameters of $S_{\p}$. Then
$gr_{\p}(S_{\p})=k'[X_1, \ldots , X_l]$ 
is a polynomial ring, where $k'$ denotes the residue field 
of $S_{\p}$, and $X_i$ is the class of $x_i$ in ${\p}S_{\p}/{\p}^2S_{\p}$, $1\leq i \leq l$. Let $\overline{X_i}$ denote the class of $x_i$ in $PB_P/P^2B_P$. Since $\langle x_1, \dots, x_l\rangle B_P$ is a reduction of $PB_P$ (\ref{C17}, 1)),  
 \begin{equation}\label{kj} gr_P(B_P)/\langle \overline{X_1}, \dots ,\overline{X_l}\rangle
 \end{equation}
is zero dimensional. Recall that $k'$ is also the residue field of $B_P$, and hence (\ref{kj}) is a finite dimensional $k'$-vector space. This implies that $gr_P(B_P)$ is a finite graded module over $gr_{\p}(S_{\p})$, as one can lift an homogeneous basis of (\ref{kj}) to homogeneous generators of $gr_P(B_P)$.
The homomorphism of $gr_{\p}(S_{\p})$ in $gr_P(B_P)$ is injective since both graded rings have the same dimension.

One can also proof this this result using Lipman's criterion at $B_P=B_\p$, namely that the Rees ring of the ideal $PB_{\p}$ is a finite extension of the Rees ring of $\p S_{\p}$ over $S_{\p}$ (see \ref{redbp}). 
\endproof
\begin{proposition}\label{Pr10} Fix the setting and notation as in \ref{C17}. 
Let $Y$ be a reduced irreducible subscheme in $Spec(B)$, and 
assume that $Y\subset F_n$. Then $Y$ is regular if and only if $\delta(Y)$ is regular. 
\end{proposition}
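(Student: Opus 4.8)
The plan is to reduce the assertion to an equality of local multiplicities, and then invoke the characterization ``multiplicity one $\Leftrightarrow$ regular'' available for the rings in play (when $S\subset B$ are finitely generated over the perfect field $k$, which is the setting of \ref{par1} and the one ultimately needed).

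First I would reduce to a pointwise statement. Let $P$ be the generic point of $Y$, so $Y=\Spec(B/P)$, and set $\mathfrak p=\delta(P)=P\cap S$, the generic point of the irreducible closed set $\delta(Y)=V(\mathfrak p)$ with its reduced structure $\Spec(S/\mathfrak p)$. A scheme is regular iff all its local rings are. A point of $Y$ is a prime $\mathfrak q\supseteq P$ of $B$, hence $\mathfrak q\in F_n$ by the theorem of Nagata recalled at the start of this section; by Corollary \ref{C17}(2) together with \ref{C16}, $\mathfrak q$ is then the unique prime of $B$ over $\mathfrak q':=\mathfrak q\cap S$, and $\mathfrak q\mapsto\mathfrak q'$ is a bijection from the points of $Y$ onto the points of $\delta(Y)$ (surjectivity by lying over for the integral extension $S/\mathfrak p\hookrightarrow B/P$). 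So it suffices to fix such a pair $\mathfrak q,\mathfrak q'$ and prove that $(B/P)_{\mathfrak q}$ is regular iff $(S/\mathfrak p)_{\mathfrak q'}$ is regular.

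Next I set up the local rings. Put $A=S_{\mathfrak q'}$, which is regular since $S$ is regular, and $C=B_{\mathfrak q}=B_{\mathfrak q'}$ (the two localizations agree because $\mathfrak q$ is the unique prime over $\mathfrak q'$). Then $A':=A/\mathfrak pA=(S/\mathfrak p)_{\mathfrak q'}$ and $D:=C/PC=(B/P)_{\mathfrak q}$ are the rings to compare; both are domains ($\mathfrak p$, $P$ prime), $D$ is finite over $A'$ (localize $S/\mathfrak p\hookrightarrow B/P$), and $\dim A'=\dim D$. By Corollary \ref{C17}(1) at the point $\mathfrak q$, $A$ and $C$ have equal residue fields, hence so do $A'$ and $D$; and applied at $P$ it gives $\kappa(P)=\kappa(\mathfrak p)$, i.e. $\delta|_Y$ is birational, so $\operatorname{Frac}(D)=\operatorname{Frac}(A')$. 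The crucial input is that the maximal ideal $\mathfrak n'D$ of $D$ (extended from the maximal ideal $\mathfrak n'$ of $A'$) is a reduction of the maximal ideal $\mathfrak M_D$ of $D$: indeed $\mathfrak q'B_{\mathfrak q}$ is a reduction of $\mathfrak qB_{\mathfrak q}$ by Corollary \ref{C17}(1), and reductions survive passage to the quotient $C\to D=C/PC$ by Lipman's criterion (equivalently by Theorem \ref{ThH}, which lets one reread this as a zero-dimensionality statement for the tangent cone). Now I compute: Zariski's multiplicity formula (Theorem \ref{MultForm}) applied to the finite extension of local domains $A'\hookrightarrow D$, with $D$ having the single maximal ideal $\mathfrak M_D$, $\dim D=\dim A'$, equal residue fields, and $[\operatorname{Frac}(D):\operatorname{Frac}(A')]=1$, reads $e_{A'}(\mathfrak n')=e_D(\mathfrak n'D)$; and Rees's theorem (Theorem \ref{Rees}) in $D$ — which is formally equidimensional, being an excellent local domain essentially of finite type over $k$ — gives $e_D(\mathfrak n'D)=e_D(\mathfrak M_D)$. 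Hence $e(A')=e(D)$. Since $A'$ and $D$ are localizations of reduced equidimensional finitely generated $k$-algebras ($S/\mathfrak p$ and $B/P$ are domains of finite type over the perfect field $k$), the criterion recalled before Theorem \ref{Rees} says each of them is regular iff its multiplicity is one; as the multiplicities agree, $A'$ is regular iff $D$ is, which is the claim.

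The main obstacle is precisely the step producing $e((S/\mathfrak p)_{\mathfrak q'})=e((B/P)_{\mathfrak q})$: it combines the reduction property $\mathfrak pB_P\subseteq PB_P$ (and its version at $\mathfrak q$) — whose transfer to the quotient is where Theorem \ref{ThH} enters, through the translation of ``reduction of the maximal ideal'' into zero-dimensionality of the relevant tangent cone — with the fact that $\delta|_Y$ has generic degree one, which rests on the residue-field equality at points of $F_n$ from Corollary \ref{C17}. A secondary technical point is checking that $D$ is formally equidimensional so that Theorem \ref{Rees} applies, which holds because $D$ is an excellent, universally catenary local domain essentially of finite type over a field. (Lemma \ref{L19} supplies the parallel graded-ring fact — a finite injection $\operatorname{gr}_{\mathfrak n}(A)\hookrightarrow\operatorname{gr}_{\mathfrak M}(C)$ of tangent cones of equal dimension — so one could equally run the comparison at the level of associated graded rings.)
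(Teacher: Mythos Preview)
Your argument is correct, but it follows a different route from the paper's and, as you flag, it requires restricting to the setting of \ref{par1} (finite type over the perfect field $k$), whereas the proposition is stated in the generality of \ref{par0}. The paper avoids the ``multiplicity one $\Leftrightarrow$ regular'' criterion altogether: for the direction $\delta(Y)$ regular $\Rightarrow$ $Y$ regular it simply observes that $S/\p_1\hookrightarrow B/P_1$ is finite and birational into a normal ring, hence an isomorphism; for the hard direction $Y$ regular $\Rightarrow$ $\delta(Y)$ regular it proves the stronger statement $\overline{S}_{\overline{\p}}=\overline{B}_{\overline{P}}$ at every point, by showing that the induced map of tangent cones $\operatorname{gr}_{\p S_\p}(S_\p)\to \operatorname{gr}_{\overline{P}\,\overline{B}_{\overline{P}}}(\overline{B}_{\overline{P}})$ is surjective (both sides are polynomial rings over the same residue field, and Theorem \ref{ThH} forces surjectivity), whence the completions---and then the local rings themselves---coincide. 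Your approach trades this graded-ring computation for the equality of multiplicities $e(A')=e(D)$ via Theorems \ref{MultForm} and \ref{Rees}; this is clean and symmetric in the two directions, but it buys you only ``both regular or both singular'' rather than the isomorphism $Y\simeq\delta(Y)$, and that isomorphism (specifically $B_P/P_1B_P=S_\p/\p_1S_\p$) is invoked later in the proof of Lemma \ref{C116}.
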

\proof 
Recall that $\delta$ is finite, and that $Y$ and $\delta(Y)$ have the same total quotient field (\ref{C17}). As regular schemes are normal, if $\delta(Y)$ is regular, then $Y$ and $\delta(Y)$ are isomorphic and $Y$ is regular.

Assume now that $Y$ is regular. Let $P_1(\subset B)$ be the generic point of $Y$, and 
$\p_1=P_1\cap S$. Fix $\overline{B}=B/P_1$ and $\overline{S}=S/\p_1$. $Spec(\overline{B})$ is the closed regular subscheme  of 
$Spec(B)$ with underlying space $Y$, and $Spec(\overline{S})$ is the closed subscheme  of 
$Spec(S)$ corresponding to $\delta(Y)$.

As $\delta: Spec(B) \to Spec(S)$ is finite, so is the restriction, say $\overline{\delta}: \Spec(\overline{B}) \to \Spec(\overline{S}).$
The claim is that $\overline{\delta}$ is an isomorphism.
Corollary \ref{C17} ensures that this finite morphisms is birational, and that it induces a set-theoretical bijection of the underlying topological spaces. Therefore,  the claim would follow if we  show that for every prime ideal $\overline{P}$ in $\overline{B}$: 
\begin{equation}\label{eqb} \overline{S}_{\overline{\p}}=\overline{B}_{\overline{P}}
\end{equation}
where $\overline{\p}=\overline{P}\cap \overline{S}$.
%
%
There is of course an inclusion of local domains $\overline{S}_{\overline{\p}}\subset \overline{B}_{\overline{P}}$, which, in addition, is birational and finite. The claim is that this inclusion is surjective. To this end it suffices to show that the morphism of graded rings, say
$gr_{\overline{\p}\overline{S}_{\overline{\p}}}(\overline{S}_{\overline{\p}})\to gr_{\overline{P}\overline{B}_{\overline{P}}}(\overline{B}_{\overline{P}})$ is surjective. In fact, this would imply, for instance, that both local rings have the same completion (\cite{AM}, Lemma 10.23, p112).
Let $P(\supset P_1)$ be the prime ideal in $B$ that induces $\overline{P}$ in
$\overline{B}$. Then $P\in F_n$ (recall that $F_n$ is closed), and dominates $S$ at the prime ideal, say $\p$, which induces a prime, say $\overline{\p}$ in
$\overline{S}$. 

There are two natural homomorphism of local rings, namely
$S_{\p}\to B_P$ and $S_{\p}\to \overline{B}_{\overline{P}}$. 
As the latter factors through $\overline{S}_{\overline{\p}}\to \overline{B}_{\overline{P}}$,  it suffices to prove that $S_{\p}\to \overline{B}_{\overline{P}}$ is surjective. We shall prove this by showing that
\begin{equation}\label{eq114}
gr_{{\p}S_{\p}}(S_{\p})\to gr_{\overline{P}\ \overline{B}_{\overline{P}}}(\overline{B}_{\overline{P}})
\end{equation}
is surjective.
Let $\{ x_1, \dots, x_l\}$ be a regular system of parameters in $S_{\p}$. The ideal $\langle x_1, \dots, x_l \rangle B_P$ is a reduction of the maximal ideal $PB_P$ (Corollary \ref{C17}, 1)), in particular $\langle x_1, \dots, x_l \rangle \overline{B}_{\overline{P}}$ is a reduction of the maximal ideal in $ \overline{B}_{\overline{P}}$.
We finally apply Theorem \ref{ThH}. Both $gr_{{\p}S_{\p}}(S_{\p})$ and 
$gr_{\overline{P}\overline{B}_{\overline{P}}}(\overline{B}_{\overline{P}})$ are, by assumption, polynomial rings over the same field. In this setting, the condition ii) of that theorem can only hold when (\ref{eq114}) is surjective.
\endproof

The following technical lemma will be used in our proof of the Theorem \ref{Th120}. It parallels the result in Proposition 5.9 in \cite{BeV1}, formulated now in the non-embedded context.
\begin{lemma}\label{C116} Let $P_1$ be the prime ideal in $B$ which is the generic point of the smooth irreducible scheme $Y$ included in the 
set $F_n$ of $n$-fold points. Fix $P\in Y$, or equivalently, a prime ideal $P$ in $B$ containing $P_1$. Let $\p_1$ and $\p$ be the prime ideals in $S$ dominated by $P_1$ and $P$ respectively. 
Let $B=S[\theta_1, \dots , \theta_M ]$, where each $\theta_i$ has a minimal polynomial over $K$ of degree $d_i \geq 2$.

Then, after replacing $S\subset B$ by $S_f\subset B_f$ for a suitable element $f\in S\setminus \p$, there are elements $\lambda_1, \ldots , \lambda_M$ in $S$ so that the minimal polynomial of each $\theta_i-\lambda_i$ is of the form 
$$h_i(Z_i)=Z_i^{d_i}+a_1^{(i)}Z_i^{d_i-1}+ \cdots +a_{d_i}^{(i)}\in 
S[Z_i]=S[Z]$$
(of the same degree $d_i$ as the minimal polynomial of $\theta_i$), and 
\begin{equation}\label{654}\nu_{\p_1}(a_j^{(i)})\geq j,
\end{equation} for $1\leq j \leq d_i$, where 
$Z_i=Z-\lambda_i$. 
In particular:

1)  $\nu_{\p}(a_j^{(i)})\geq j$, for all index $1\leq j \leq d_i$.

2) $\{\theta_1-\lambda_1, \dots , \theta_M-\lambda_M\}$ are in 
$PB_{P}$, $B_P=B\otimes_SS_{\p}$, and as $P$ is an $n$-fold point  $PB_{P}$ is the integral closure of $\p B_P$.

3) $\{\theta_1-\lambda_1, \dots , \theta_M-\lambda_M\}$ are in  the integral closure of 
$\p_1B_P=\p_1(B\otimes_SS_\p)$.
\end{lemma}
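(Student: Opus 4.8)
The plan is to reduce everything to a hypersurface statement for each $\theta_i$ separately and then to glue via Proposition \ref{16}, the essential new ingredient being Proposition \ref{Pr10}, which lets us choose the translations $\lambda_i$ \emph{globally} in $S$ rather than only in $S_{\mathfrak{p}_1}$. First note that $P_1$, being the generic point of $Y\subset F_n$, is an $n$-fold point; since $Y$ is regular, Proposition \ref{Pr10} says that the restricted finite morphism $\Spec(B/P_1)\to\Spec(S/\mathfrak{p}_1)$ is an isomorphism, i.e.\ the inclusion $S/\mathfrak{p}_1\hookrightarrow B/P_1$ is onto. Hence, for each $i$, the class of $\theta_i$ in $B/P_1$ is the image of a unique element of $S/\mathfrak{p}_1$, and choosing any lift $\lambda_i\in S$ of it we get $\theta_i-\lambda_i\in P_1$, with no element of $\mathfrak{p}$ (or of $\mathfrak{p}_1$) inverted. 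By Remark \ref{rk56} the minimal polynomial of $\theta_i-\lambda_i$ over $K$ is then $h_i(Z_i):=f_i(Z_i+\lambda_i)\in S[Z_i]$, a monic polynomial of the same degree $d_i$, obtained from $f_i$ by the change of variable $Z_i=Z-\lambda_i$, and $S[\theta_i-\lambda_i]=S[Z_i]/\langle h_i(Z_i)\rangle$. (A harmless principal localization $S_f\subset B_f$, $f\notin\mathfrak{p}$, may be inserted to match the form quoted later, but the choice of the $\lambda_i$ does not require it.)

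Next I would prove the order bound \eqref{654}. Since $P_1$ is an $n$-fold point dominating $\mathfrak{p}_1$, Proposition \ref{16}, applied to the presentation $B=S[\theta_1,\dots,\theta_M]$, shows that $\mathfrak{p}_1$ is the image of a point of multiplicity $d_i$ in $\Spec(S[Z]/\langle f_i(Z)\rangle)\cong\Spec(S[Z_i]/\langle h_i(Z_i)\rangle)$, the isomorphism being $Z\mapsto Z_i+\lambda_i$. By Proposition \ref{P110}(2) there is $\mu\in S_{\mathfrak{p}_1}$ such that re-expanding $h_i$ about $Z_i-\mu$ makes all coefficients have $\nu_{\mathfrak{p}_1}$-order $\geq j$; moreover, by the argument in the proof of \ref{P110}, the corresponding $d_i$-fold point is the prime $\langle\mathfrak{p}_1S_{\mathfrak{p}_1},\,Z_i-\mu\rangle/\langle h_i\rangle$, so modulo $\mathfrak{p}_1$ one has $\bar h_i=(\bar Z_i-\bar\mu)^{d_i}$ in $\kappa(\mathfrak{p}_1)[Z_i]$. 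On the other hand $\theta_i-\lambda_i\in P_1$ forces the prime $\mathfrak{q}_i:=P_1\cap S[\theta_i-\lambda_i]$, which lies over $\mathfrak{p}_1$, to contain the class of $Z_i$; its preimage in $S[Z_i]$ is therefore the prime $\langle\mathfrak{p}_1,Z_i\rangle$, and containment of $h_i$ there forces $\bar h_i\in(\bar Z_i)$, i.e.\ $(-\bar\mu)^{d_i}=0$, whence $\mu\in\mathfrak{p}_1S_{\mathfrak{p}_1}$ and $\mathfrak{q}_i=\langle\mathfrak{p}_1,Z_i\rangle/\langle h_i\rangle$ is itself the $d_i$-fold point. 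Thus the hypersurface $\{h_i=0\}$ has order $d_i$ in the regular local ring $S[Z_i]_{\langle\mathfrak{p}_1,Z_i\rangle}$; writing $h_i=Z_i^{d_i}+\sum_{j=1}^{d_i}a_j^{(i)}Z_i^{d_i-j}$ and observing that the leading form $Z_i^{d_i}$ cannot cancel against the initial forms of the terms $a_j^{(i)}Z_i^{d_i-j}$ (which have $Z_i$-degree $<d_i$), we get $\nu_{\mathfrak{p}_1}(a_j^{(i)})+(d_i-j)\geq d_i$, that is, $\nu_{\mathfrak{p}_1}(a_j^{(i)})\geq j$, which is \eqref{654}. (Equivalently one may invoke Corollary \ref{pres} at $P_1$ to produce some admissible translation $\lambda_i'$ and transport the bound to the canonical $\lambda_i$, using that $\lambda_i-\lambda_i'\in P_1\cap S=\mathfrak{p}_1$ and that a translation by an element of $\mathfrak{p}_1S_{\mathfrak{p}_1}$ preserves the order of $h_i$ at $\langle\mathfrak{p}_1,Z_i\rangle$.)

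The three ``in particular'' clauses then follow quickly. For 1): $\mathfrak{p}\supseteq\mathfrak{p}_1$, and the order of an element of $S$ at a point of $V(\mathfrak{p}_1)$ is at least its order along $V(\mathfrak{p}_1)$, so $\nu_{\mathfrak{p}}(a_j^{(i)})\geq\nu_{\mathfrak{p}_1}(a_j^{(i)})\geq j$. For 2): $\theta_i-\lambda_i\in P_1\subseteq P$, hence lies in $PB_P$; and since $P\supseteq P_1\in F_n$ and $F_n$ is closed (Corollary \ref{C17}, or by the theorem of Nagata), $P\in F_n$, so $B_P=B\otimes_SS_{\mathfrak{p}}$ and $PB_P$ is the integral closure of $\mathfrak{p}B_P$ by Corollary \ref{C16}. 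For 3): from the integral relation $(\theta_i-\lambda_i)^{d_i}+\sum_{j=1}^{d_i}a_j^{(i)}(\theta_i-\lambda_i)^{d_i-j}=0$: passing to $S_{\mathfrak{p}_1}$, where $\nu_{\mathfrak{p}_1}(a)\geq j$ means exactly $a\in(\mathfrak{p}_1S_{\mathfrak{p}_1})^{j}$, this relation exhibits $\theta_i-\lambda_i$ as an element integral over the ideal generated by $\mathfrak{p}_1$, so $\theta_i-\lambda_i$ lies in the integral closure of $\mathfrak{p}_1B_P$ (equivalently in $P_1B_{P_1}=\overline{\mathfrak{p}_1B_{P_1}}$, the last equality being Corollary \ref{C16} applied to the $n$-fold point $P_1$).

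The main obstacle is the first paragraph: obtaining the $\lambda_i$ in $S$ itself, with no localization that might invert an element of $\mathfrak{p}$, which is precisely where Proposition \ref{Pr10} (the isomorphism $S/\mathfrak{p}_1\simeq B/P_1$) is indispensable, since Proposition \ref{P110} by itself only supplies translations living in the larger local ring $S_{\mathfrak{p}_1}$. The remaining delicate point is the second paragraph's identification of $\mathfrak{q}_i=P_1\cap S[\theta_i-\lambda_i]$ with the correctly centred $d_i$-fold point (the step $\bar\mu=0$); after that the order bound is just the elementary non-cancellation of the leading form $Z_i^{d_i}$.
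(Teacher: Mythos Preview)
Your proof is correct, and it takes a slightly different route from the paper's. The paper first applies Corollary~\ref{pres} at the \emph{closed} point $P$ to obtain elements $\lambda_i$ for which the bound $\nu_{\p}(a_j^{(i)})\geq j$ already holds (this is where the localization at some $f\in S\setminus\p$ genuinely enters), and only afterwards adjusts each $\lambda_i$ by an element $\lambda_i'\in\p S_{\p}$, using Proposition~\ref{Pr10} in its local form $B_P/P_1B_P=S_{\p}/\p_1S_{\p}$, to force $\theta_i-\lambda_i\in P_1B_P$; the $\p_1$-bound is then derived exactly as in your second paragraph, by identifying the unique prime of $S_{\p}[\theta_i-\lambda_i]$ over $\p_1$ and invoking the argument of Proposition~\ref{16}. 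You reverse the order: you invoke Proposition~\ref{Pr10} in its global form $S/\p_1\cong B/P_1$ to choose $\lambda_i\in S$ with $\theta_i-\lambda_i\in P_1$ from the start, and then deduce the $\p$-bound in 1) from the $\p_1$-bound. Your approach is a bit more economical and shows that the principal localization in the statement is not actually needed for the choice of the $\lambda_i$; the paper's two-step construction, on the other hand, makes the role of $P$ (and hence of part~1)) explicit before passing to $P_1$. Both arguments converge on the same endgame: once $\theta_i-\lambda_i\in P_1$, the unique prime of $S[\theta_i-\lambda_i]$ over $\p_1$ is the $d_i$-fold point, $\bar h_i=\bar Z_i^{\,d_i}$ in $\kappa(\p_1)[Z_i]$, and $h_i\in\langle\p_1,Z_i\rangle^{d_i}$ gives \eqref{654}.
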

\begin{corollary}\label{Cor116}
Fix a regular system of parameters 
$\{x_1, \ldots , x_r, \ldots ,x_s\}$ at $S_{\p}$, so that $\p_1S_{\p}=\langle x_1, \ldots , x_r  \rangle$. Then

i) $PB_P=\langle \theta_1-\lambda_1, \dots , \theta_M-\lambda_M, x_1, \ldots , x_r, \ldots ,x_s\rangle$, and 

ii) $P_1B_{\p}(=P_1B_P)=\langle \theta_1-\lambda_1, \dots , \theta_M-\lambda_M, x_1, \ldots , x_r\rangle$.
\end{corollary}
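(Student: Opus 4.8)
The plan is to carry out all the computations inside the local ring $B_P$. Since $P$ is the unique prime of $B$ over $\p$ (Corollary \ref{C16}), we have $B_P=B\otimes_S S_\p=S_\p[\theta_1-\lambda_1,\dots,\theta_M-\lambda_M]$ — here $B=S[\theta_1,\dots,\theta_M]=S[\theta_1-\lambda_1,\dots,\theta_M-\lambda_M]$ because $\lambda_i\in S$ — it is local with maximal ideal $PB_P$ and residue field $k'=S_\p/\p S_\p$, and $\p S_\p=\langle x_1,\dots,x_s\rangle$ while $\p_1S_\p=\langle x_1,\dots,x_r\rangle$.

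For i), I would set $J=\langle\theta_1-\lambda_1,\dots,\theta_M-\lambda_M,x_1,\dots,x_s\rangle B_P$. Part 2) of Lemma \ref{C116} puts each $\theta_i-\lambda_i$ in $PB_P$, and each $x_j$ lies in $\p S_\p\subseteq PB_P$, so $J\subseteq PB_P$ is a proper ideal. Conversely, modulo $J$ the $\theta_i-\lambda_i$ vanish, so the structure map $S_\p\to B_P/J$ is surjective; it kills $\langle x_1,\dots,x_s\rangle=\p S_\p$, hence factors as a surjection $k'\twoheadrightarrow B_P/J$, which is an isomorphism because $B_P/J\neq0$. Thus $J$ is a maximal ideal of $B_P$, and therefore $J=PB_P$.

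For ii), set $J_1=\langle\theta_1-\lambda_1,\dots,\theta_M-\lambda_M,x_1,\dots,x_r\rangle B_P$. The inclusion $J_1\subseteq P_1B_P$ comes in two pieces: $x_1,\dots,x_r\in\p_1S_\p\subseteq P_1B_P$, and by part 3) of Lemma \ref{C116} the elements $\theta_i-\lambda_i$ lie in the integral closure of $\p_1B_P$, which is contained in $P_1B_P$ since a prime ideal is integrally closed and $\p_1B_P\subseteq P_1B_P$. For the reverse inclusion I would invoke Proposition \ref{Pr10}: as $Y$ is regular, $\delta$ identifies $\Spec(B/P_1)$ with $\Spec(S/\p_1)$ compatibly with the closed immersion, so after localizing, the natural map $S_\p/\p_1S_\p\to B_P/P_1B_P$ is an isomorphism. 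Exactly as in i), reduction modulo $J_1$ gives a surjection $S_\p/\p_1S_\p\twoheadrightarrow B_P/J_1$, and composing with $B_P/J_1\twoheadrightarrow B_P/P_1B_P$ recovers that isomorphism:
\[
S_\p/\p_1S_\p\ \twoheadrightarrow\ B_P/J_1\ \twoheadrightarrow\ B_P/P_1B_P.
\]
Since the composite is injective, each arrow is bijective; hence $B_P/J_1\to B_P/P_1B_P$ is injective, and with $J_1\subseteq P_1B_P$ this forces $J_1=P_1B_P$. Finally $P_1B_P=P_1B_\p$ because $B_P=B_\p$, $P$ being the only prime over $\p$.

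The routine part is i), a residue-field computation. The delicate point in ii) is the inclusion $J_1\subseteq P_1B_P$: Lemma \ref{C116}(3) only places the $\theta_i-\lambda_i$ in the \emph{integral closure} of $\p_1B_P$, so one genuinely needs that prime ideals are integrally closed to land them in $P_1B_P$ itself. The other thing not to skip is that the isomorphism furnished by Proposition \ref{Pr10} is compatible with the ring inclusion $S/\p_1\hookrightarrow B/P_1$, so that the composite of the two displayed surjections is precisely that isomorphism; without this, the step that a surjection between two copies of a noetherian ring is bijective does not apply.
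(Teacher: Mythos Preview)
Your proof is correct and follows essentially the same route as the paper: part i) is handled by a residue-field computation (the paper simply cites the earlier construction in Corollary \ref{pres}, equation (\ref{cac})), and part ii) combines the inclusion $J_1\subseteq P_1B_P$ coming from Lemma \ref{C116} with the identification $B_P/P_1B_P\cong S_\p/\p_1S_\p$ of Proposition \ref{Pr10} (equation (\ref{eqb})). Your explicit ``composite surjection is an isomorphism, hence each factor is bijective'' argument simply spells out what the paper's terser sentence leaves implicit.
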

%

\proof i) is clear from the construction. As for ii), one concludes from the lemma that  $$\langle \theta_1-\lambda_1, \dots , \theta_M-\lambda_M, x_1, \ldots , x_r \rangle\subset P_1B_\p (=P_1B_P).$$
This, and the fact that $B_P/P_1B_P=S_\p/\p_1S_\p$ is a local ring with maximal ideal generated by the classes of the element $\{x_{r+1}, \dots, x_s\}$, imply that the inclusion is an equality (see (\ref{eqb})).

%
%
\proof  (Of the lemma.) Note that $\p_1\subset \p$ in $S$, so if we prove (\ref{654}), then 1) holds. Here $\p_1$ can be identified with a regular prime in $S_\p$, so $\nu_{\p_1}(a_j^{(i)})\geq j$ if and only if $a_j^{(i)}\in \p_1^i S_\p$ (powers and symbolic powers coincide for a regular prime in a regular ring). Therefore, both 2) and 3) would also follow from the condition $h_i(\theta_i-\lambda_i)=0$ (see also Corollary \ref{C17}, 1)).

We prove now the existence of equations with the conditions in (\ref{654}). As a first step we apply Corollary \ref{pres} for the prime $P$ in $B$. This only tells us that the elements 
$\lambda_1, \ldots , \lambda_M$ con be chosen so that each
$\theta_i-\lambda_i$ has a minimal polynomial, say 
$$h_i(Z_i)=Z_i^{d_i}+a_1^{(i)}Z_i^{d_i-1}+ \cdots +a_{d_i}^{(i)}\in 
S[Z_i]=S[Z] \ (Z_i=Z-\lambda_i),$$
and $\nu_{\p}(a_j^{(i))})\geq j$, for all index $1\leq j \leq d_i$.  
These latter inequalities, and the degree $d_i$, are not affected if  $\theta_i-\lambda_i$  is replaced by 
$\theta_i-\lambda_i-\lambda'_i,$ as long as we choose  
$\lambda'_i \in \p S_{\p}$. 

We claim that, after a modification of $\lambda_i$ of this form, it may be assumed that  $\theta_i-\lambda_i\in P_1$: Recall that $\theta_i-\lambda_i\in PB_P$ (\ref{pres}), and that $B_P/P_1B_P=S_{\p}/\p_1S_{\p}$ 
(\ref{Pr10}). By choosing  an element $\lambda'_i\in  \p S_{\p}$ which induces the class of 
$\theta_i-\lambda_i$ in $S_{\p}/{\p}_1S_{\p}$, we may assume that 
 $\theta_i-\lambda''_i\in P_1B_P(\subset PB_P)$, for $\lambda''_i= \lambda_i+\lambda'_i$. 
 
 So assume now that $\theta_i-\lambda_i\in P_1(\subset P)$. In particular, $P_1$ is a prime ideal in $B_P$ which dominates the local ring $S_\p[\theta_i-\lambda_i]=S_\p[Z_i]/\langle h_i(Z_i)\rangle$ at a prime ideal which contains $\p_1S_{\p}[\theta_i-\lambda_i]$ and the element $\theta_i-\lambda_i$. Therefore, the morphism $S_{\p}[Z]=S_{\p}[Z-\lambda_i] \to B$ maps $Z-\lambda_i$ into the prime ideal $P_1$. Moreover,  as $S_{\p}\subset S_{\p}[\theta_i-\lambda_i]\subset B_P=S_{\p}\otimes_SB$ are finite extensions, this is the only prime of 
$S_{\p}[\theta_i-\lambda_i]$ dominating $S$ at ${\p}_1$.  
So after localization, say $S_{\p} \to S_{\p_1}$, it follows that 
 the class of $h_i(Z_i)$ in 
 $(S_{\p_1}/\p_1S_{\p_1})[Z_i]$ is included in the ideal 
 spanned by $Z_i$ in $S_{\p_1}/\p_1S_{\p_1}[Z_i]$ , moreover, as $P_1\cap S_{\p}[\theta_i-\lambda_i]$ is the unique prime dominating $\p_1$, it follows that the class of $h_i(Z_i)$ in 
 $(S_{\p_1}/\p_1S_{\p_1})[Z_i]$ is $Z_i^{d_i}$, or equivalently that $\nu_{\p_1}(a_j^{(i)})\geq 1$, for all index $1\leq j \leq d_i$. The proof of \ref{16} shows that under these conditions (namely that $P_1$ is an $n$-fold point), $\nu_{\p_1}(a_j^{(i)})\geq j$, for all index $1\leq j \leq d_i$ (see (\ref{uyt}) and  (\ref{uyt1})). 
 \endproof
 
\begin{parrafo} We introduce here notation that will be used in the formulation of the forthcoming theorem. Fix a regular ring $S$, with quotient field $K$ , and a prime ideal 
$\p_1$ of $S$ such that $S/\p_1S$ is also regular. Assume that there are elements $\{x_1, \dots, x_r\}$ in $S$, which are a regular system of parameters of $S_{\p_1}$. 
Let $Y'=V(\p_1)$ (the closure of $\p_1$ in $Spec(S)$), and let $Spec(S) \leftarrow R$ denote the blow-up with center $Y'$.
The scheme $R$ can be covered by affine charts, $U_t=Spec(S_t)$, 
\begin{equation}\label{118} S_t= S[\frac{x_1}{x_t}, \ldots , \frac{x_r}{x_t}] (\subset K), \ t=1, \dots , r.
\end{equation}

Suppose given a polynomial 
\begin{equation}\label{eq119}
f(Z)=Z^{s}+c_1Z_1^{s-1}+ \cdots +c_{s}\in S[Z]
\end{equation}
such that
\begin{equation}\label{eq120}
\nu_{\p_1}(c_j)\geq j, \ j=1,\dots , s.
\end{equation}
There is an inclusion $S\subset S_t$, and $c_j \in x^j_tS_t$, for every $ j=1,\dots , s$.

Fix a variable $Z'$. We will say that the polynomial 
\begin{equation}\label{tres}
f_1({Z'})={Z'}^{s}+\frac{c_1}{x_t} {Z'}^{s-1}+ \cdots +\frac{c_s}{x^s_t} \in S_t[{Z'}]
\end{equation}
is a {\em strict transform} of (\ref{eq119}).
\end{parrafo}
\begin{parrafo}
 Assume again that $B$ and $S$ are as in \ref{par0}, say $B=S[\theta_1, \ldots ,\theta_M]$, and let
$$f_i(Z)=Z^{d_i}+b_1^{(i)}Z^{d_i-1}+ \cdots +b_{d_i}^{(i)}\in 
S[Z]$$
denote the minimal polynomial of $\theta_i$.
Assume that $\p_1(\subset S)$ is dominated by a prime ideal $P_1$ in $B$ of multiplicity $n=\mbox{dim}_KB\otimes_SK$, and that $B/P_1 (=S/\p_1)$ is regular. Lemma \ref{C116} says that locally at any point of $Y'$, there are elements $\{\lambda_1, \dots , \lambda_M\}\subset S$ so 
that the polynomial of $\theta_i-\lambda_i$ is 
\begin{equation}\label{xyz}
h_i(Z_i)=Z_i^{d_i}+a_1^{(i)}Z_i^{d_i-1}+ \cdots +a_{d_i}^{(i)}\in 
S[Z_i],
\end{equation}
where $Z_i=Z-\lambda_i$, $h_i(Z_i)=f_i(Z)$ in $S[Z]=S[Z_i]$, 
and $\nu_{\p_1}(a_j^{(i))})\geq j$,  $1\leq j \leq d_i$. 

Given $Spec(S) \leftarrow R$ as above, a strict transform of each $h_i(Z_i)$ is given in $S_t[{Z}'_i]$, for some variable ${Z}'_i$, at every affine chart $Spec(S_t)$ of $R$ in (\ref{118}).

\end{parrafo}

\begin{theorem}\label{Th120} Fix the notations and conditions as before, in particular let $n=\mbox{dim}_K (B\otimes_SK) $. Let $d=\dim B$. Assume that $Y\subset Spec(B)$ is a regular irreducible subscheme with generic point $P_1$,  included in the $n$-fold points, and that $\dim Y<d$. Then a commutative diagram
\begin{equation}\label{edcdt} \xymatrix{
 Spec(B)\ar[d]^-{\delta} & & X\ar[d] ^-{\delta_1}\ar[ll]^-{\pi}\\
 Spec(S)&   & \ar[ll]^-{\pi'}R\\
 } 
\end{equation}
is given, where the horizontal morphisms are the blow ups at $Y$ and $\delta(Y)$ respectively, and $\delta_1$ is a finite dominant morphism. 
Moreover

1)  Given a point $\p\in \delta(Y)$, and after taking a restriction of $\Spec(S)$ and $\delta: \Spec(B) \to \Spec(S)$ to a suitable affine neighborhood of $\p$, $R$ can be covered by affine charts $U_t=\Spec(S_t)$, $t=1, \dots , r$, as in (\ref{118}), and $X$ by charts $V_t=\delta_1^{-1}(U_t)=\Spec(B_t)$, where
\begin{equation}\label{eq277}
B_t=S_t[\frac{\theta_1-\lambda_1}{x_t}, \dots , \frac{\theta_M-\lambda_M}{x_t}].
\end{equation}

2) Non-zero elements in $S_t$ are non-zero divisors in $B_t$ and 
$K\otimes_{S_t} B_t=K\otimes_{S} B$ is the total quotient field of $B_t$. In particular $n=\mbox{dim}_K (B_t\otimes_{S_t}K) $, and the condition in \ref{par0} hold.

3) The minimal polynomial of $\frac{\theta_i-\lambda_i}{x_t}$ is 
\begin{equation}\label{xyz11}
h'_i(W_i)=W_i^{d_i}+\frac{a_1^{(i)}}{x_t}W_i^{d_i-1}+ \cdots +\frac{a_{d_i}^{(i)}}{x_t^{d_i}}\in 
S_r[W_i],
\end{equation}
namely, the strict transform of (\ref{xyz}).
\end{theorem}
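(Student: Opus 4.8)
The plan is to set up the diagram via the universal property of blowing up, then verify the local chart descriptions 1)--3), and finally check the transversality/multiplicity bookkeeping in 2).

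First I would construct the diagram (\ref{edcdt}). Let $P_1\subset B$ be the generic point of $Y$ and $\p_1=P_1\cap S$; by Proposition \ref{Pr10} the subscheme $\delta(Y)=V(\p_1)$ is regular, and by Corollary \ref{C17} the map $\overline\delta:Y\to\delta(Y)$ is an isomorphism. Fix $\p\in\delta(Y)$. By Lemma \ref{C116}, after replacing $S\subset B$ by $S_f\subset B_f$ for a suitable $f\in S\setminus\p$, we have $B=S[\theta_1-\lambda_1,\dots,\theta_M-\lambda_M]$ with each $\theta_i-\lambda_i$ in the integral closure of $\p_1 B_\p$, and with minimal polynomials $h_i(Z_i)$ as in (\ref{xyz}) satisfying $\nu_{\p_1}(a_j^{(i)})\geq j$. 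Writing $\p_1S_\p=\langle x_1,\dots,x_r\rangle$ as part of a regular system of parameters, I note that the elements $\theta_i-\lambda_i$, lying in the integral closure of $\langle x_1,\dots,x_r\rangle B$, in particular lie in $\sqrt{\p_1 B}$, so that the ideal $\p_1 B$ and the ideal $\mathcal I:=\langle x_1,\dots,x_r,\theta_1-\lambda_1,\dots,\theta_M-\lambda_M\rangle$ have the same radical and (using the Lipman reduction criterion as in \ref{redbp}) in fact the same integral closure; hence $\p_1 B$ is a reduction of $\mathcal I$ and $\mathcal I\calo_X=\p_1 B\calo_X$ is invertible on the blow-up $X$ of $\Spec(B)$ at $Y$. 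Since $\p_1 B$ is generated by $r$ elements, by the discussion in \ref{redbp} the scheme $X$ is covered by the $r$ charts $\Spec(B_t)$ with $B_t=B[\tfrac{x_1}{x_t},\dots,\tfrac{x_r}{x_t},\tfrac{\theta_1-\lambda_1}{x_t},\dots,\tfrac{\theta_M-\lambda_M}{x_t}]$. The universal property of blowing up then gives the morphism $\delta_1:X\to R$ into the blow-up $R$ of $\Spec(S)$ at $\delta(Y)$, compatible with $\pi,\pi'$, and $\delta_1^{-1}(U_t)=\Spec(B_t)$ with $B_t=S_t[\tfrac{\theta_1-\lambda_1}{x_t},\dots,\tfrac{\theta_M-\lambda_M}{x_t}]$ since $\tfrac{x_j}{x_t}\in S_t$; this is 1). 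Finiteness of $\delta_1$ follows because $\delta_1$ is the map of blow-ups induced by the finite inclusion $S\subset B$, or directly because each $\tfrac{\theta_i-\lambda_i}{x_t}$ satisfies the monic strict-transform polynomial (\ref{xyz11}) over $S_t$.

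For 3), I would simply divide the equation $h_i(\theta_i-\lambda_i)=0$ by $x_t^{d_i}$: since $a_j^{(i)}\in x_t^{j}S_t$ (because $x_t\in\p_1S_\p$ divides $a_j^{(i)}$ to order $\geq j$, and more precisely $\nu_{\p_1}(a_j^{(i)})\geq j$ means $a_j^{(i)}\in\p_1^{(j)}=\langle x_1,\dots,x_r\rangle^j S_\p$, so $a_j^{(i)}/x_t^j\in S_t$), the element $W_i:=\tfrac{\theta_i-\lambda_i}{x_t}$ satisfies $h'_i(W_i)=0$ with $h'_i$ as in (\ref{xyz11}); that this is the \emph{minimal} polynomial follows because $[K(\theta_i):K]=d_i$ is unchanged by the rescaling $W_i=(\theta_i-\lambda_i)/x_t$ inside the fixed field $L=B\otimes_S K$, exactly as in Case 2) of the discussion preceding Corollary \ref{Cor54}. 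For 2), the point is that $\pi'$ is birational, so $S_t\subset K$ and the quotient field of $S_t$ is $K$; since $B_t\subset B\otimes_S K=L$ (all the generators $\tfrac{\theta_i-\lambda_i}{x_t}$ live in $L$), $B_t$ is torsion-free over $S_t$ and $B_t\otimes_{S_t}K=L$, giving $\dim_K(B_t\otimes_{S_t}K)=\dim_K L=n$; finiteness of $S_t\subset B_t$ is clear from the monic equations in 3), and $\Spec(S_t)$ is regular (a chart of the blow-up of a regular scheme at a regular center), so condition II) of \ref{par0}, namely formal equidimensionality of the localizations, holds, and $Ass(B_t)=Min(B_t)$ follows from torsion-freeness over the domain $S_t$.

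The main obstacle I expect is the careful verification that $\p_1 B$ is a reduction of $\mathcal I$ — equivalently that the blow-up of $\Spec(B)$ at $Y$ really is covered by just the $r$ charts coming from the generators $x_1,\dots,x_r$ of $\p_1$, and that the resulting $\delta_1$ lands in $R$ — rather than a larger blow-up. This rests on showing $\theta_i-\lambda_i$ lies in the integral closure of $\p_1 B$ (not merely $\p B$), which is precisely the content of part 3) of Lemma \ref{C116} combined with the fact (from the proof of Proposition \ref{16}) that $\nu_{\p_1}(a_j^{(i)})\geq j$ forces $\theta_i-\lambda_i$ to be integral over $\p_1 B_\p$; then Lipman's criterion $\p_1 B\cdot\mathcal I^k=\mathcal I^{k+1}$ and the argument recalled in \ref{redbp} close the chart count. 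Once that is in place, everything else is bookkeeping with the strict-transform polynomials and the invariance of $\dim_K L$ under the allowed changes of variable.
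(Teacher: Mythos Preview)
Your proposal is correct and follows the same approach as the paper: both localize at a closed point $P\supset P_1$, invoke Lemma~\ref{C116} to obtain the $\lambda_i$ and the reduction $\langle x_1,\dots,x_r\rangle B_P\subset P_1B_P$, apply~\ref{redbp} to cover the blow-up by the $r$ charts, and verify 2) via the inclusion $B_t\subset B\otimes_S K$ and 3) by the direct strict-transform computation. One clarification worth making explicit: your auxiliary ideal $\mathcal I$ actually \emph{equals} $P_1B_\p$ by Corollary~\ref{Cor116}(ii), and it is this identification (rather than merely ``$\p_1B$ is a reduction of $\mathcal I$'') that guarantees the $r$ charts cover the blow-up at $Y$---the paper cites Corollary~\ref{Cor116} for exactly this, and you should too.
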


%
%

\proof The theorem will be proved by analyzing the statements locally. Fix a prime $P$ in $B$, containing $P_1$ (the generic point of $Y$). Let $\p_1=P_1\cap S$, $\p=P\cap S$. 
Fix a regular system of parameters 
$\{x_1, \ldots , x_r, \ldots ,x_s \}$ at $S_{\p}$, so that $\p_1S_{\p}=\langle x_1, \ldots , x_r  \rangle$. Fix also $\{\lambda_1, \dots , \lambda_M\}$ in $S$ (in a neighborhood of $p$ in $S$) so that:

i) $PB_P=\langle \theta_1-\lambda_1, \dots , \theta_M-\lambda_M, x_1, \ldots , x_r, \ldots ,x_s\rangle$, and 

ii) $P_1B_{\p}=\langle \theta_1-\lambda_1, \dots , \theta_M-\lambda_M, x_1, \ldots , x_r \rangle$ 
(see 
 \ref{Cor116}).
%
%

Since $\langle x_1, \ldots , x_r \rangle B_P$ is a reduction of 
$P_1B_P$ (\ref{C116}), the blow-up of $Spec(B_P)$ at the prime $P_1B_P$
can be covered by charts of the form $Spec(B_P^{(t)})$, where
$$B_P^{(t)}=B_P[\frac{\theta_1-\lambda_1}{x_t}, \frac{\theta_2-\lambda_2}{x_t}, \dots , \frac{\theta_M-\lambda_M}{x_t},\frac{x_1}{x_t}, \ldots , \frac{x_r}{x_t}],$$
for $t=1, \dots ,r$ (see \ref{redbp}).
Recall that $B_P=B\otimes_SS_\p$, as $P$ is an $n$-fold prime, and note that 
$$B_P^{(t)}=S_{\p}[\frac{\theta_1-\lambda_1}{x_t}, \frac{\theta_2-\lambda_2}{x_t}, \dots , \frac{\theta_M-\lambda_M}{x_t},\frac{x_1}{x_t}, \ldots , \frac{x_r}{x_t}],$$
and that
$S_{\p}[\frac{x_1}{x_t}, \ldots , \frac{x_r}{x_t}]\subset B_P^{(t)}$
is a finite extension. 

There is an element $s\in S\setminus \p$ so that, after replacing $S$ by $S_s$ and $B$ by $B_s$, we may assume that $X$ can be 
covered by affine charts of the form $Spec(B^{(t)})$

$$B^{(t)}=S[\frac{\theta_1-\lambda_1}{x_t}, \frac{\theta_2-\lambda_2}{x_t}, \dots , \frac{\theta_M-\lambda_M}{x_t},\frac{x_1}{x_t}, \ldots , \frac{x_r}{x_t}],$$
and that
$$S^{(t)}=S[\frac{x_1}{x_t}, \ldots , \frac{x_r}{x_t}]\subset B^{(t)}$$
is a finite extension. Here $S^{(t)}$ is a regular irreducible algebra with quotient field $K$. Let us draw attention on the fact that $B^{(t)}(\subset B_{x_t})$, is also included in $B\otimes_SK$, in particular non-zero elements in $S^{(t)}$ are non-zero divisors in $B^{(t)}$, and $S^{(t)}\subset B^{(t)}$  are as in \ref{par0}. This enables us to use the result in Remark \ref{rk56} to study minimal polynomials. Note first that 1) and 2) follow from this construction.
Recall that the minimal polynomial of $\theta_i-\lambda_i$ over $S$ (over $K$), are of the form 
$$h_i(Z_i)=Z_i^{d_i}+a_1^{(i)}Z_i^{d_i-1}+ \cdots +a_{d_i}^{(i)}\in 
S[Z_i]=S[Z],$$
and $\nu_{\p_1}(a_l^{(i)})\geq l$, for all index $1\leq l \leq d_i$.
So $x_j^l$ divides $a_l^{(i)}$ in the regular ring $S^{(j)}$.
To prove 3) note simply that 
$h'_i(Z_i)=Z_i^{d_i}+\frac{a_1^{(i)}}{x_j}Z_i^{d_i-1}+ \cdots +\frac{a_{d_i}^{(i)}}{x_j^{d_i}}\in 
S^{(j)}[Z_i]$
is the minimal polynomial of $\frac{\theta_i-\lambda_i}{x_j}$ over 
$S^{(j)}$ (over $K$).
%
%
\begin{corollary}\label{Cxc1} Assume that $S$ is smooth over a field $k$ is a field of characteristic zero, and let 
$$f_i(Z)=Z^{d_i}+a_1^{(i)}Z^{d_i-1}+ \cdots +a_{d_i}^{(i)}\in S[Z],$$
be, as in \ref{redbp}, the minimal polynomial of $\theta_i$, $1\leq i \leq M$.
Let $\mathcal{F}\subset S[W]$ be, as in \ref{Cxc}, the $S$-algebra attached to $S\subset B$ and the presentation $B=S[\theta_1, \dots, \theta_M]$ (generated by the elimination algebras of each polynomial
$f_i(Z)$, $1\leq i \leq M$).
Then $\Spec(S)\leftarrow R$ induces a transformation of $\mathcal{F}$ to a Rees algebra, say $\mathcal{F}_1$ over $R$. Moreover, the restriction of 
$\mathcal{F}_1$ to the affine chart $\Spec(S_t)$ is the Rees algebra attached to $S_t\subset B_t$ and the presentation in (\ref{eq277}).
\end{corollary}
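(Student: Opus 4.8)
The plan is to prove this locally on $R$, by combining the intrinsic description of the elimination algebra (Corollary \ref{Cor54}), the description of the charts of the blow-up together with the strict transform formula (Theorem \ref{Th120}), and the weighted-homogeneity of the invariants $G_{m_j}$ recalled in \ref{pel} and Remark \ref{ark}.

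First I would check that $\delta(Y)$ is a permissible center for $\mathcal{F}$. By Proposition \ref{Pr10} the subscheme $\delta(Y)$ is regular, hence smooth over $k$ since $k$ has characteristic zero; and by Corollary \ref{Cxc}, applied to $S\subset B$, one has $\Sing(\mathcal{F})=\delta(F_n)$. As the generic point of $Y$ lies in $F_n$, its image lies in $\Sing(\mathcal{F})$, so $\delta(Y)\subset\Sing(\mathcal{F})$. Therefore $\Spec(S)\leftarrow R$ induces the transform $\mathcal{F}_1$ of $\mathcal{F}$ in the sense of \ref{weaktransforms}, and $\mathcal{F}_1$ is again a Rees algebra over $R$: indeed $\mathcal{F}$ is finitely generated over $S$ (each elimination algebra $\mathcal{G}(S[\theta_i]/S)$ is generated by the finitely many invariants $G^{(i)}_{m_j}(a^{(i)}_1,\dots,a^{(i)}_{d_i})W^{m_j}$, the relevant symmetric group being finite), and by the local description in Proposition \ref{localt} the transform of a finitely generated Rees algebra is again finitely generated on every chart.

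Next I would set up the local computation. After replacing $S\subset B$ by a suitable localization $S_f\subset B_f$ as in Theorem \ref{Th120}, write $B=S[\theta_1-\lambda_1,\dots,\theta_M-\lambda_M]$, each $\theta_i-\lambda_i$ having minimal polynomial $h_i(Z_i)=Z_i^{d_i}+a^{(i)}_1Z_i^{d_i-1}+\cdots+a^{(i)}_{d_i}$ over $S$ with $\nu_{\p_1}(a^{(i)}_j)\geq j$. By the intrinsic character of the elimination algebra (Corollary \ref{Cor54}) the algebra $\mathcal{F}$ is the same whether computed from the polynomials $f_i(Z)$ or from the $h_i(Z_i)$, so $\mathcal{F}$ is the smallest $S$-subalgebra of $S[W]$ containing all the $G^{(i)}_{m_j}(a^{(i)}_1,\dots,a^{(i)}_{d_i})W^{m_j}$. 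On a chart $\Spec(S_t)$ of $R$, where $\p_1S_t=x_tS_t$, the weighted transform of such a generator is $\bigl(G^{(i)}_{m_j}(a^{(i)}_1,\dots,a^{(i)}_{d_i})/x_t^{m_j}\bigr)W^{m_j}$; this is a genuine element of $S_t[W]$ because $G^{(i)}_{m_j}(a^{(i)}_1,\dots,a^{(i)}_{d_i})\in\p_1^{m_j}S_{\p_1}$ (as $\delta(Y)\subset\Sing(\mathcal{F})$, by Remark \ref{ark}), so $x_t^{m_j}$ divides it in $S_t$. By the weighted-homogeneity of $G^{(i)}_{m_j}$ (the rescaling identity in Case 2) of \ref{pel}),
\[
\frac{G^{(i)}_{m_j}(a^{(i)}_1,\dots,a^{(i)}_{d_i})}{x_t^{m_j}}
= G^{(i)}_{m_j}\!\left(\frac{a^{(i)}_1}{x_t},\dots,\frac{a^{(i)}_{d_i}}{x_t^{d_i}}\right).
\]

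Finally, Theorem \ref{Th120}(3) identifies $W_i^{d_i}+\frac{a^{(i)}_1}{x_t}W_i^{d_i-1}+\cdots+\frac{a^{(i)}_{d_i}}{x_t^{d_i}}$ as the minimal polynomial $h'_i(W_i)$ of $\frac{\theta_i-\lambda_i}{x_t}$ over $S_t$, and Theorem \ref{Th120}(1)--(2) guarantees that $S_t\subset B_t=S_t[\frac{\theta_1-\lambda_1}{x_t},\dots,\frac{\theta_M-\lambda_M}{x_t}]$ is again in the setting of \ref{par0}. Hence $G^{(i)}_{m_j}(a^{(i)}_1/x_t,\dots,a^{(i)}_{d_i}/x_t^{d_i})W^{m_j}$ is precisely a generator of the elimination algebra $\mathcal{G}(S_t[\frac{\theta_i-\lambda_i}{x_t}]/S_t)$. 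Since $\mathcal{F}_1|_{\Spec(S_t)}$ is generated over $S_t$ by the weighted transforms of the generators of $\mathcal{F}$ (Proposition \ref{localt}), it follows that $\mathcal{F}_1|_{\Spec(S_t)}$ is the smallest $S_t$-subalgebra of $S_t[W]$ containing $\mathcal{G}(S_t[\frac{\theta_i-\lambda_i}{x_t}]/S_t)$ for $i=1,\dots,M$, which is exactly the Rees algebra attached to $S_t\subset B_t$ and the presentation (\ref{eq277}) in the sense of Corollary \ref{Cxc}. The only delicate point — and the main obstacle — is the bookkeeping of exponents that makes the weighted transform of each invariant $G^{(i)}_{m_j}(a^{(i)})$ evaluate the \emph{same} invariant on the coefficients of the strict transform $h'_i$; this is precisely what the weighted-homogeneity of the $G^{(i)}_{m_j}$ combined with Theorem \ref{Th120}(3) provides.
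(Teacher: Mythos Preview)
Your argument is correct and is precisely an unpacking of the paper's one-line proof, which simply says the corollary ``follows from \ref{Cor55}'': you apply the single-polynomial elimination-algebra transform (the content of Corollary~\ref{Cor55}, i.e., the weighted-homogeneity identity for the $G_{m_j}$ combined with the strict-transform formula) to each generator $\theta_i$ and then assemble. One minor point: your references to ``\ref{pel}'' for the rescaling identity (Case 2) are mislabeled---that discussion sits in the unnumbered paragraph just before Corollary~\ref{Cor54}, not in \ref{pel}.
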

The corollary follows from \ref{Cor55}. Note that the theorem shows that this property of $\G$ also holds for a sequence of transformation over $\Spec(B)$ with centers included in the $n$-fold points.

\begin{remark}\label{r38} 
{\bf 1)}  The fact that  $\dim Y< \dim \Spec(B)$ ensures that the blow up 
induces an isomorphism on the height zero prime ideals of $B$, namely over each $B_{q_i}$, $i=1, \dots , r$, in  (\ref{008}).

{\bf 2)}  If $B$ is replaced by the reduced ring $B_{\red}$, and if 
$m =\dim_K (B_{\red}\otimes_SK)$, then $Y$ is included in the $m$-fold points of $B_{\red}$ (\ref{lem28}). Moreover, there is a natural compatibility of 
the diagram (\ref{edcdt}) with reductions. In other words,
\begin{equation}\label{edcdt2} \xymatrix{
 \Spec(B_{\red})\ar[d]^-{\beta} & &X_{\red}\ar[d] ^-{\beta_1}\ar[ll]^-{\pi}\\
 \Spec(S)&   & \ar[ll]^-{\pi'}R\\
 } 
\end{equation}
is the outcome of blowing up $B_{\red}$ at $Y$. 
In particular:

i)  $X_{\red}$ is covered by the affine charts 
$(B_t)_{\red}$ , $t=1, \dots, r$, for each $B_t$ as in (\ref{eq277}).

ii) The set of $n$-fold points in $X$ coincides with the set of $m$-fold points of $X_{red}$.

iii) The Theorem shows that the same holds after blowing up several times over $\Spec(B)$, as long as the multiplicity is $n$  along the centers, or equivalently, over $\Spec(B_{red})$ as long as the centers have multiplicity $m$. 
\end{remark}
%
%


\begin{parrafo}\label{rk215} {\em Main global results.}
%
We study now the behavior of  the function 
$ \mult_X: X \to \mathbb{N}$, for an equidimensional scheme $X$ of finite type over a perfect field $k$. $X$ can be covered by finitely affine schemes of the form $\Spec(B)$, where $B$ is an equidimensional algebra of finite type over $k$. Up to this point, in our discussion, we have drawn attention to algebras $B$ with the additional two conditions stated in \ref{par1}. We will first show that that in order to study the multiplicity 
we may assume that $Ass(X)=Min(X)$, and then we prove that $X$ can be covered, \'etale locally, by charts that fulfill these two conditions. We finally extract conclusions in \ref{lem51c} and \ref{lem512}.


We began by showing that in order to study the multiplicity along prime ideals of an equidimensional algebra $B$ as above, we may assume that $Ass(B)=Min(B)$. 
More precisely, given a pure dimensional scheme $X'$ of finite type over a perfect field $k$, we claim here that there is a uniquely defined closed subscheme, say $X\subset X'$, so that:

a) $X$ and $X'$ have the same underlying 
topological space, 

b) $ \mult_X= \mult_{X'}$, and 

c) $Ass(X)=Min(X)$. 

This would show that for the purpose of studying the stratification of singularities of $X$, defined by the level sets of function $\mult_X$, we may always assume that $Ass(X)=Min(X)$. 


Fix an equidimensional algebra $B$, and let $Min(B)=\{ q_1, \dots , q_r\}$. Let $J_i= ker (B \to B_{q_i}) \ i=1, \dots, r.$
Each $J_i$ is the $q_i$-primary component of the ideal zero in $B$. Finally set 
$J=\cap_{i=1, \dots, r} J_i$, and $B'=B/J$. It follows from the construction that $Ass(B')=Min(B')=\{ q_1, \dots , q_r\}$ and 
 that $B_{q_i}=B'_{q_i}$, $i=1, \dots, r$.
In particular $B$ and $B'$ have the same underlying topological space. Finally note that the additive property in (\ref{additive}), applied to the  localization of the short exact sequence 
$$0 \to J \to B \to B' \to 0$$
at any prime ideal $P$, shows that $e_{B_P}(PB_P)=e_{B'_P}(PB'_P)$. This proves a), b), and c), as the same holds at any any affine chart of $X$, and settles the previous claim for $X\subset X'$.

We claim now that the condition in c) is compatible with \'etale topology.  Namely, if $Min(B)=Ass(B)$ and $B\to C$ is \'etale, then $Min(C)=Ass(C)$. 

It suffices to check this latter condition locally at any prime ideal $P$ of $C$. One can choose an element $f\in C\setminus P$ so that $C_f$ is a localization of a finite and free $B$-module (see \cite[Remark 2, pg. 19]{R}). This latter observation shows, in addition, that $X$ fulfills c) if and only if this condition holds at an affine \'etale cover of $X$.

We remark here that the proof of Theorem \ref{Th120} shows that the previous conditions on $X\subset X'$ are compatible with blow ups at equimultiple centers. Namely, let $X' \leftarrow X'_1$ and $X\leftarrow X_1$  denote the corresponding blow ups, by general properties of blow-ups we know that $X_1\subset X'_1$, and the construction of the charts given in the theorem show that a), b) and c) will also hold for $X_1$.

We finally claim that if $Ass(X)=Min(X)$, then $X$ can be covered, \'etale locally, by affine charts which fulfill the additional conditions in \ref{par1}. We refer here to \cite{BrV1}, Appendix A), and to our previous discussion, to show that 
the \'etale cover can be chosen by finitely generated $k$-algebras $C$, together with a finite extension $S\subset C$, where $S$ is smooth over $k$,  and the conditions in \ref{par1} hold.

\begin{theorem}\label{lem51c} (Dade-Orbanz) Let $X$ be an equidimensional scheme of finite type over a perfect field, then 
\begin{itemize}
\item[1)] $\mult_X: X \to (\mathbb{N}, \geq) $ 
is an upper semi-continuos function.
\item[2)] If $\alpha: X_1\rightarrow X$ is the blow up at a smooth equimultiple center, then $ \text{mult}_X(\alpha(\xi))\geq  \text{mult}_{X_1}(\xi)$ at any $\xi\in X_1$, in particular
$\max \text{mult}_X\geq \max \text{mult}_{X_1}$.
\end{itemize}
\end{theorem}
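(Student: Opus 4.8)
The plan is to reduce both statements, by the descent machinery assembled in \ref{rk215}, to the affine transversal situation of Sections 5--6. We may assume $X$ is quasi-compact, and then, étale-locally around any closed point $x\in X$ with $\mult_X(x)=n$, we may replace $X$ by $\Spec(B)$ equipped with a finite extension $S\subset B$ as in \ref{par1} (so $S$ is smooth over $k$, $Ass(B)=Min(B)$) and with $\dim_K(B\otimes_SK)=n$; thus on such a chart the bound in \ref{rk14} gives $\mult_{B_P}(P)\le n$ for \emph{every} prime $P$, so this chart meets $\{\mult_X>n\}$ nowhere, $F_n=\{\mult_X\ge n\}$ on it, and $F_n$ is closed by Corollary \ref{C17}(3). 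Since closed points are dense and étale maps are open, finitely many such charts cover $X$, and in each of them the multiplicity is étale-local; this is the only input needed for the assembly below.

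For (1) it suffices to prove that $\{\mult_X\ge j\}$ is closed for every $j$, and I would do this by descending induction on $M=\max\mult_X$. A chart as above centred at a closed point $x$ contains only points of multiplicity $\le\mult_X(x)$; hence the charts around points of multiplicity $<M$ miss $\{\mult_X=M\}$ entirely, while a chart around a point of multiplicity $M$ meets it in the closed set $F_M$. Since closedness descends along an étale cover (étale maps being open), $\{\mult_X\ge M\}=\{\mult_X=M\}$ is closed. Restricting to the open complement $X\setminus\{\mult_X=M\}$, whose maximal multiplicity is $<M$, the inductive hypothesis gives that $\{\mult_X\ge j\}\setminus\{\mult_X=M\}$ is closed there for all $j$; combining this with the elementary computation of closures relative to an open subscheme and with the inclusion $\{\mult_X=M\}\subseteq\{\mult_X\ge j\}$ for $j\le M$, we recover closedness of $\{\mult_X\ge j\}$ in $X$. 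This proves upper semicontinuity (and shows, in passing, that each stratum $\{\mult_X=j\}$ is locally closed).

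For (2), let $\alpha\colon X_1\to X$ be the blow-up at a smooth, irreducible, equimultiple centre $Y$, say $\mult_X\equiv n$ along $Y$ and $\dim Y<\dim X$ (outside $Y$ the map $\alpha$ is an isomorphism, so there is nothing to check there). It is enough to show $\mult_{X_1}\le n$ on $\alpha^{-1}(Y)$; since $\{\mult_{X_1}\le n\}$ is open by part (1) and $\alpha^{-1}(Y)_{\mathrm{red}}$ is of finite type over $k$, hence Jacobson, it suffices to verify this at the closed points of $X_1$ lying over $Y$. If $x_1$ is such a point, then $x:=\alpha(x_1)$ is a closed point of $Y$ (as $\alpha$ is proper), so $\mult_X(x)=n$; working étale-locally at $x$ we obtain $S\subset B$ with $\dim_K(B\otimes_SK)=n$ in which $Y$ is a regular irreducible subscheme contained in the $n$-fold points, its image being smooth by \ref{Pr10}. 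Theorem \ref{Th120} then produces the commutative diagram in which the blow-up of $\Spec(B)$ at $Y$ is covered by affine charts $\Spec(B_t)$ with $S_t\subset B_t$ again as in \ref{par0} and, crucially, with the \emph{same} generic fibre dimension $\dim_K(B_t\otimes_{S_t}K)=n$. Hence \ref{rk14} forces $\mult\le n$ at every point of every such chart, in particular at the point corresponding to $x_1$, so $\mult_{X_1}(x_1)\le n=\mult_X(\alpha(x_1))$. The inequality $\max\mult_X\ge\max\mult_{X_1}$ then follows at once, since $\mult_{X_1}(\xi)\le\mult_X(\alpha(\xi))\le\max\mult_X$ for all $\xi\in X_1$.

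The assembly above is routine once the two genuine ingredients are in hand: the étale-local existence of a finite transversal chart $S\subset B$ realizing the local multiplicity as the generic fibre dimension (so that \ref{C17}(3) and \ref{rk14} apply), and Theorem \ref{Th120}, which guarantees that blowing up a smooth equimultiple centre yields charts with the \emph{same} integer $n$. I expect the main obstacle --- apart from those two results, which are established earlier --- to be the bookkeeping that passes from closed points to arbitrary points: in (1) the descent of closedness along étale covers together with the closure computation across the open stratum, and in (2) the reduction to closed points via openness of $\{\mult\le n\}$, Jacobson-ness of $\alpha^{-1}(Y)$, and properness of $\alpha$. One must also take care that "equimultiple centre" is read as "$\mult_X$ constant along $Y$", which is precisely the hypothesis under which \ref{Th120} preserves $n$.
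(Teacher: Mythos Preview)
Your proof is correct and follows the same strategy as the paper's: reduce via \ref{rk215} to the \'etale-local transversal setting of \ref{par0}, then invoke Corollary \ref{C17}(3) and \ref{rk14} for (1), and Theorem \ref{Th120}(2) together with \ref{rk14} for (2). The paper's proof is a two-line citation of exactly these results, whereas you have spelled out the assembly in detail --- the descending induction on $\max\mult_X$ for (1), and the reduction to closed points via properness and the Jacobson property for (2) --- which is precisely the bookkeeping hidden behind the paper's phrase ``it suffices to prove these results locally, in \'etale topology.'' One small remark: for (1) you could shorten the induction by observing directly that $\{\mult_X\le m\}$ is open, since any closed point $x$ with $\mult_X(x)=n\le m$ admits a transversal chart of generic rank $n$ on which all multiplicities are $\le n\le m$; but your argument as written is perfectly valid.
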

\proof
The discussion in the preceding paragraph says that it suffices to prove these results locally, in \'etale topology and in the setting of \ref{par0}. The claim in 1) follows now from the Corollary \ref{C17}, 3), and the claim in 2) follows from  Theorem \ref{Th120}, 2), and  \ref{rk14}. 

\begin{remark}\label{XXX1}

This theorem, formulated here for equidimensional scheme of finite type over a perfect field, holds in more generality. We sketch a proof  of Dade for the case in which $X$ is an excellent scheme,  and all saturated chains of  irreducible subschemes $Y_0 \subset Y_1 \dots \subset Y_d$ have the same length. 

1) There are two properties to be checked in order to prove that
$\text{mult}_X$ is upper semi-continuous:

i) Given two points, $x ,y$ in the underlying topological space, and if $x\in \overline{y}$, then $$\text{mult}_X(x)\geq \text{mult}_X(y).$$

ii) The set $\{ x\in \overline{y} / \text{mult}_X(x)=\text{mult}_X(y)\}$
contains a dense open set in $\overline{y}$.

 The property in i) follows from a result of Nagata which we have also used in our discussion. He proves that if $p$ is a prime ideal in a local ring $R$, then $e(R)\geq e(R_p)$ holds under conditions which are valid at the local rings of points  $x\in \overline{y}$ in $X$) (see \cite{Nagata1}, or Theorem 40.1 in \cite{Nagata}). 
 
 We now sketch Dade's proof of the property in ii). Assume that $R$ is an equidimensional and excellent local ring. Let $V\to \Spec(R)$ denote the blow up of $\Spec(R)$ at $p$, and let $Y\to \Spec(R/p)$ be the restriction to $\Spec(R/p)\subset \Spec(R)$. The conditions on $R$ ensure that $Y$ is pure dimensional, of dimension $h(p)-1$ locally at any closed point.
 
 A theorem of Chevalley shows that the dimension of the fibers of $Y\to \Spec(R/p)$, corresponding to the different 
 points in $\Spec(R/p)$, is an upper semi-continuos function on the underlying topologic space of 
 this scheme (see Theorem in (13.1.3), \cite{EGAIV} No.28). 
 
 It is simple to check that the fiber at the generic point of $\Spec(R/p)$ has dimension $h(p)-1$. Dade
 proves that if the local ring $R/p$ is regular, then the condition $e(R)=e(R_p)$ holds if and only if all the fibers of the morphism $Y\to \Spec(R/p)$ have the same dimension $h(p)-1$ (\cite{D}).
 
 Fix a point $y\in X$. As $X$ is excellent there is a dense open set of points 
 $x\in \overline{y}$ where  $ \overline{y}$ is regular. The condition in ii) finally follows from this fact and Dade's observation.
 
The dimension of the closed fiber of the morphism $Y\to \Spec(R/p)$ is known as the analytic spread of $p$ in the local ring $R$, a notion introduced by Northcott and Rees (see \cite{NR}, p. 149).
%
%
%
%
%
%

2) This result is also due to Dade. He proved that the multiplicity does not increase when blowing up at equimultiple centers (\cite{D}). The proof was later simplified by Orbanz in 
\cite{Orbanz}, using a generalization of invariants introduced by Balwant Singh in \cite{BS}, and the generalized Hilbert function introduced in \cite{HerrmannAL}. 

We refer also to \cite{HerrmannOr}, \cite{Herrmann}, and \cite{Lipman2}, for detailed expositions on this development.

We end here indicating that the proof we gave for 2), in Theorem \ref{lem51c}, also applies 
for the case that $\calo_{X, \alpha(\xi)}$ is excellent, pure dimensional, and contains an infinite field. The proofs given by Orbanz and by Dade require less conditions, and are therefore more general.
\end{remark}
\color{black}

The local description of the stratification obtained the multiplicity in Corollary \ref{C17}, together with  Lemma \ref{lem28}, renders the following result.

\begin{proposition}\label{lem512} Let $X$ be an equidimensional scheme of finite type over a perfect field, then the partition of the underlying topological space obtained from the level sets of the function $\mult_X: X \to \mathbb{N}$, coincides with that  obtained from $\mult_{X_{red}}: {X_{red}} \to \mathbb{N}$.
\end{proposition}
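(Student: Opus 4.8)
The plan is to reduce the global statement to the local, affine, branched-cover situation treated in Corollary \ref{C17} and Lemma \ref{lem28}. First I would recall that the question is purely topological: $X$ and $X_{\red}$ have the same underlying topological space, so the assertion is that the level sets $\{\xi : \mult_X(\xi)=n\}$ and $\{\xi : \mult_{X_{\red}}(\xi)=m\}$ determine the same partition, i.e. that $\mult_X(\xi)=\mult_X(\eta)$ if and only if $\mult_{X_{\red}}(\xi)=\mult_{X_{\red}}(\eta)$, for all $\xi,\eta\in X$. Since this is a statement that can be checked on an open cover, and in fact on an \'etale cover (as semicontinuity and the value of the multiplicity at a prime are local in the \'etale topology), by the discussion preceding Theorem \ref{lem51c} in \ref{rk215} we may assume $X=\Spec(B)$ with $S\subset B$ as in \ref{par0}, so that there is a finite morphism $\delta:\Spec(B)\to\Spec(S)$ with $S$ regular, $\dim_K(B\otimes_S K)=n$, and $\Ass(B)=\Min(B)$.

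Next I would invoke Lemma \ref{lem28}: with $m=\dim_K(B_{\red}\otimes_S K)$, that lemma states that a prime ideal $P$ is an $n$-fold point of $B$ if and only if $PB_{\red}$ is an $m$-fold point of $B_{\red}$ (under the natural identification of primes of $B$ with those of $B_{\red}$). In other words, the set $F_n(X)$ of $n$-fold points of $X$ coincides, as a subset of the common topological space, with the set $F_m(X_{\red})$ of $m$-fold points of $X_{\red}$. So the top stratum of $\mult_X$ equals the top stratum of $\mult_{X_{\red}}$. The remaining strata are handled by Noetherian induction on the closed subscheme: restrict to the open complement $U=X\setminus F_n(X)$, which is again an equidimensional scheme of finite type over $k$ (it is open in $X$), and whose reduction is $X_{\red}\setminus F_m(X_{\red})=U_{\red}$; the level sets of $\mult_U$ are the level sets of $\mult_X$ other than $F_n(X)$, and likewise for $U_{\red}$. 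Applying the same argument to $U$ peels off the next stratum, and since $X$ is Noetherian the process terminates after finitely many steps, giving the claimed coincidence of partitions.

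A small point to take care of: Lemma \ref{lem28} is stated in the local affine setting $S\subset B$, but after passing to the \'etale cover of \ref{rk215} the value $m=\dim_K(B_{\red}\otimes_S K)$ a priori depends on the chart. This causes no problem, because the statement we want — equality of the two partitions of the topological space — is intrinsic and can be glued from charts: on each chart the two partitions agree by Lemma \ref{lem28}, hence they agree globally, and the actual numerical labels $n,m$ are irrelevant to the partition. The main obstacle, such as it is, is therefore not a deep one; it lies in making the \'etale descent of the statement rigorous, i.e. checking that "the level-set partition of $\mult_X$ agrees with that of $\mult_{X_{\red}}$'' is a property that can be verified on an \'etale cover. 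This follows because $\mult_X$ at a point depends only on the \'etale local ring (equivalently, is invariant under \'etale base change), a fact already used repeatedly in \ref{rk215}, so the preimages of the two partitions under an \'etale cover $X'\to X$ are exactly the level-set partitions of $\mult_{X'}$ and $\mult_{X'_{\red}}$, and equality upstairs descends to equality downstairs.
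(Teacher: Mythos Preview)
Your proposal is correct and follows essentially the same route as the paper, which simply says the result ``follows from the local description of the stratification obtained from the multiplicity in Corollary \ref{C17}, together with Lemma \ref{lem28}.'' You supply the missing details: the \'etale-local reduction from \ref{rk215} to the setting $S\subset B$ of \ref{par0}, then Lemma \ref{lem28} to identify the top strata, and a Noetherian induction to handle the remaining strata; this last step is a reasonable way to organize the argument, though one could equally well argue point-by-point (at each $\xi$ choose a transversal chart of generic rank $\mult_X(\xi)$ and apply Lemma \ref{lem28} there).
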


\end{parrafo}

\end{section}

\begin{section}{Local presentations attached to the multiplicity.}

%
%
%
%
%
%


Fix a reduced equidimensional scheme $X'$, of finite type over a perfect field $k$, with maximum multiplicity $n$. We will consider  a sequence of blow ups, say 
\begin{equation}\label{3004}X'\leftarrow X'_1 \leftarrow \dots \leftarrow X'_r,
\end{equation}
where each transformation is the blow up at a smooth center included in the $n$-fold points. 
Local presentations provide a (local) description of the set of $n$-fold points along any sequence as before. This description, 
discussed in the Introduction (see {\bf 3)}) 
is obtained via a local inclusion of $X'$ in a smooth scheme over $k$.

Over fields of characteristic zero, local presentations enable us to {\em construct}, for each reduced scheme $X$ in the previous conditions, a resolution of singularities. In fact, in Section 8 we show that this provides a 
{\em constructive} proof of Theorem \ref{Res}, with the expected natural properties.

\begin{parrafo}
\label{blpm}{\em On the local setting and local presentations.} 
Assume that $X'$ is a reduced equidimensional scheme, of finite type over a perfect field $k$, and that $Ass(X')=Min(X')$. This latter condition is clearly fulfilled if $X'$ is reduced.
Let $n$ denote the highest multiplicity along points of $X'$, and let 
$F_n(X')$ denote the set of $n$-fold points.
\color{black}

The key point for local presentations relies on the fact that \'etale locally one can assume that that conditions in  \ref{rk14} hold (see \ref{rk215}). Namely, that locally at a point of multiplicity $n$ in $X'$ one can obtain an \'etale neighborhood, say $X\to X'$,
so that $X$ is affine, say $X=\Spec(B)$, and there is a finite morphism $\delta: X=\Spec(B)\to V=\Spec(S)$, of generic rank $n$.  
Under these conditions Corollary \ref{pres} and Theorem \ref{Th120} ensures that: 

\begin{enumerate}
\item the sequence (\ref{3004}) induces, by taking an \'etale restriction, a commutative diagram \begin{equation}
\label{gdigra}
\xymatrix@R=0pt@C=30pt{
X_0:=X\ar[dddd]^{\delta_0}     & \ar[l]
 X_1  \ar[dddd]^{\delta_1}   & \ldots  \ar[l] &   \ar[l] X_r \ar[dddd]^{\delta_r}  \\
\\
\\
\\
V_0   & 
 \ar[l] V_1      & \ldots   \ar[l] &   \ar[l] V_r
}
\end{equation}
where the vertical maps  are finite morphisms, and they induces a homeomorphism, say 
$\delta_i: F_n(X_i)\to \delta_i(F_n(X_i))$ for each index $i=1, \dots ,r.$

\item The expression $B=S[\Theta_1, \dots, \Theta_M]$ and the surjection $S[X_1, \dots, X_M]\to S[\Theta_1, \dots, \Theta_M]$ induces an embedding $X\subset W=\Spec(S[X_1, \dots, X_M])$.
and a Rees algebra
$$\mathcal{G}=\calo_W[f_1T^{n_1}, \dots,f_MT^{n_M} ]( \subset \calo_W[T]$$
where $f_i=f_i(X_i)$ is the minimal polynomial of $\Theta_i$, which is a monic polynomial $S[X_i]$ of degree $n_i$, and 
$$\Sing(\mathcal{G})=F_n(X) \subset W.$$
\item The sequence (\ref{gdigra}) and the previous inclusion $X\subset W$ induces
\begin{equation}
\label{gdigrt}
\xymatrix@R=0pt@C=30pt{
W_0:=W\ar[dddd]^{\delta'_0}     & \ar[l]
 W_1  \ar[dddd]^{\delta'_1}   & \ldots  \ar[l] &   \ar[l] W_r \ar[dddd]^{\delta'_r}  \\
\\
\\
\\
V_0   & 
 \ar[l] V_1      & \ldots   \ar[l] &   \ar[l] V_r
}
\end{equation}
and closed immersions 
\begin{equation}X_i\subset W_i  ,  \ 1, \dots, r.
\end{equation}

\item The upper row in the preceding diagram induces  transformations 
\begin{equation}\label{lilo}(W_0, \mathcal{G}_0):=(W, \mathcal{G})\leftarrow (W_1, \mathcal{G}_1)\leftarrow \dots \leftarrow (W_r, \mathcal{G}_r)
\end{equation}
and 
\begin{equation}\label{bfx}\Sing(\mathcal{G}_i)=F_n(X_i)\subset W_i ,  \ 1, \dots, r.
\end{equation}
\item Conversely, any sequence of transformation of $(W, \mathcal{G})$
, as that in (\ref {lilo}), induces a diagram (\ref{gdigrt}), and a diagram (\ref{gdigra}). In addition the equalities in (\ref{bfx}) hold.
\item The properties in 3), 4), and 5), also hold if multiplication by affine spaces are interspersed in the sequences (\ref{gdigra}) (see \ref{xrecta}).
\end{enumerate}
\begin{remark}\label{rklopr} The previous result settles the existence of local presentations mentioned in the Introduction. 
The Remark \ref{rk210}
can be used to show that the algebra $\mathcal{G}$ is well defined up weak equivalence.
\end{remark}
\end{parrafo}

\section{From local presentations to equivariant resolution of singularities.}\label{strongly_varieties} 
In this section, devoted to resolution of singularities, $X$ will be mostly a reduced equidimensional scheme, separated and of finite type over a perfect field $k$. In such case we abuse the notation and say that $X$ is a variety over $k$. We refer to \cite{BrV1}, Section 26, for further details on the next definitions and results.

Let $X$ be a variety. A {\em local sequence over $X$}  will be a sequence of morphisms: 
\begin{equation}
\label{local_X}
X=X_0\stackrel{\varphi_1}{\longleftarrow} X_1\stackrel{\varphi_2}{\longleftarrow}\ldots  \stackrel{\varphi_m}{\longleftarrow} X_m
\end{equation}
where each $X_i\longleftarrow X_{i+1}$ is either  blow up at a smooth center $Y_i\subset X_i$, or a smooth morphism of one of the following forms:
\begin{enumerate}
\item  The restriction to an open Zariski subset of $X_i$; 
\item $X_{i+1}$ is of the form $X_i\times {\mathbb A}^n_k$, and then $X_i\longleftarrow X_{i+1}$ is the projection on the first coordinates.
\end{enumerate} 

We will be interested in studying certain upper-semi continuous functions on $X$ that naturally extend 
at each step of a local sequence over $X$.
Fix a well ordered set 
$(\Lambda, \geq)$ and assume that for every variety $X$ an  upper-semi continuous function  
$$\text{F}_X: X \mapsto (\Lambda, \geq)$$ is defined. Let $\text{max F}_X$ denote the maximum value of $\text{F}_X$, and consider the closed subset of $X$,  
$$\underline{\text{Max}} \text{ F}_X:=\{\xi\in X: \text{F}_X(\xi)= \text{max F}_X\}.$$ 

A  local sequence over $X$, like that in (\ref{local_X}), is said to be {\em $F_X$-local}  if whenever $\varphi_i: X_{i+1}\longrightarrow X_{i}$ 
is the blow up at a smooth center $Y_i\subset X_i$, one has that $Y_i\subset \underline{\text{Max}} \text{ F}_{X_i}$. 

We will  say that $F_X$ is a  {\em strongly upper-semi continuous}  if :
\begin{itemize} 
\item given  any $F_X$-local sequence, 
\begin{equation}
\label{local_X_b_bis}
X=X_0\stackrel{\varphi_1}{\longleftarrow} X_1\stackrel{\varphi_2}{\longleftarrow} \ldots  \stackrel{\varphi_m}{\longleftarrow} X_m
\mbox{    \ one has that }
\end{equation}
$$\text{max F}_{X_0}\geq   \text{max F}_{X_1}\geq \ldots \geq \text{max F}_{X_m}.$$
\item If $\alpha: X'\to X$ is smooth (e.g. if it is \'etale), then $F_{X'}= F_X \cdot \alpha$.

\end{itemize}
As we will be studying  upper-semi continuous functions  defined for every variety,  we  refer to them  as {\em upper-semi continuous functions on varieties}. If $F$ is an upper-semi continuous function on varieties,  we  denote by $F_X$ the function on a  concrete $X$. 

\begin{example}  Consider the set $\Lambda=\mathbb{N}^{\mathbb{N}}$  with the lexicographic ordering. Given a closed point $\xi$ in a variety $X$, the Hilbert-Samuel function at the local ring $\mathcal{O}_{X,{\xi}}$ is a function from $\mathbb{N}$ to $\mathbb{N}$, so the graph is an element of $\Lambda=\mathbb{N}^{\mathbb{N}}$.
One can extend this function to non-closed points, say $\text{HS}_X: X\to \Lambda$, in such a way that it is upper-semi-continuos. A theorem of Bennett states that $F=HS$ is a strongly upper semi continuos function.

Another example is given by the multiplicity, namely $\text{mult}$, where the totally ordered set to be considered is $\Lambda=\mathbb{N}$.
(see \ref{lem51c}).  
\end{example}

\begin{definition}\label{d_representable} 
Let $F$ be an upper-semi continuous function on varieties.  We will say 
that {\em $F$ is globally representable} at a variety $X$, if there is an embedding in a smooth scheme 
$V^{(n)}$, 
$$X\subset V^{(n)}$$ 
and there is  an ${\mathcal O}_{V^{(n)}}$-Rees algebra ${\mathcal G}^{(n)}$ so that the following conditions hold: 
\begin{enumerate}
\item There is an equality of closed sets: 
$$\underline{\text{Max}} \ F_X=\Sing {\mathcal G}^{(n)};$$

\item Any $F_X$-local sequence  
$\begin{array}{ccccccc}
X=X_0 &  \leftarrow &  X_1 &  \leftarrow  &  \ldots & \leftarrow  & X_m
\end{array}$
with 
$$\text{max F}_X=\text{max F}_{X_0}\dots = \text{max F}_{X_{m-1}} \geq \text{max F}_{X_m}
$$
induces a sequence 
$$\begin{array}{ccccccc} 
V^{(n)}_0=V^{(n)} & \leftarrow & V^{(n)}_1 & \leftarrow & \ldots & \leftarrow V^{(n)}_m\\
{\mathcal G}_0^{(n)}= {\mathcal G}^{(n)} & & {\mathcal G}_1^{(n)} & &\ldots  & {\mathcal G}_m^{(n)}
\end{array}$$ 
and:
$$\underline{\text{Max}} \ F_{{X}_i}=\Sing {\mathcal G}^{(n)}_i, \ i=1, \dots, m-1,$$
$$\underline{\text{Max}} \ F_{{X}_m}=\Sing {\mathcal G}^{(n)}_m, \mbox{ if } \text{max } \ F_{{X}_{m-1}}=\text{max } \ F_{{X}_{m}}$$
$$\Sing {\mathcal G}^{(n)}_m=\emptyset , \mbox{ if }   \text{max } \ F_{{X}_{m-1}}>\text{max } \ F_{{X}_{m}}.$$

\item Conversely, any  ${\mathcal G}^{(n)}$-local sequence 
$$\begin{array}{ccccccc} 
V^{(n)}_0=V^{(n)} & \leftarrow & V^{(n)}_1 & \leftarrow & \ldots & \leftarrow V^{(n)}_m\\
{\mathcal G}_0^{(n)}= {\mathcal G}^{(n)} & & {\mathcal G}_1^{(n)} & &\ldots  & {\mathcal G}_m^{(n)}
\end{array}$$ 
induces an $F_X$-local sequence  
$$\begin{array}{ccccccc}
X=X_0 (\subset V^{(n)}_0) &  \leftarrow &  X_1(\subset V^{(n)}_1)  &  \leftarrow  &  \ldots & \leftarrow  & X_m  (\subset V^{(n)}_m) \mbox{ with }
\end{array}$$
$$\text{max F}_X=\text{max F}_{X_0}\dots = \text{max F}_{X_{m-1}} \geq \text{max F}_{X_m} $$
and:
$$\underline{\text{Max}} \ F_{{X}_i}=\Sing  {\mathcal G}^{(n)}_i, \ i=1, \dots, m-1,$$
$$\underline{\text{Max}} \ F_{{X}_m}=\Sing {\mathcal G}^{(n)}_m, \mbox{ if }  \text{max } \ F_{{X}_{m-1}}=\text{max } \ F_{{X}_{m}}$$
$$\Sing  {\mathcal G}^{(n)}_m=\emptyset , \mbox{ if }   \text{max } \ F_{{X}_{m-1}}>\text{max } \ F_{{X}_{m}}.$$

\end{enumerate}
\end{definition}

\begin{definition}\label{d_representable1} 
We will say that  a  strongly upper-semi-continuous   function on varieties $F$ is  {\em representable via local embedding}, if for each variety  $X$ and  every point  $\xi \in X$,   the previous definition holds  at  some \'etale neighborhood of $\xi$. 
Note that it suffices to restrict this condition to closed points of $X$.\color{black}
\end{definition} 
\begin{example} \label{ejemplo_Aroca} A Theorem of Aroca  asserts that the Hilbert-Samuel function, $\text{HS}_X$, is representable via  local embedding. 
Given a point  $\xi\in X$, so after replacing $X$ by an \'etale neighborhood  there is an embedding $X\subset V^{(n)}$ in a smooth ambient space $V^{(n)}$, and there is  an 
${\mathcal O}_{V^{(n)}}$-Rees algebra 
${\mathcal G}^{(n)}$, unique up to weak equivalence, such that 
$\Sing {\mathcal G}^{(n)}=\underline{\text{Max}} \text{ HS}_X$ is the set of point with the highest Hilbert Samuel function.
This results is discussed in detail by Hironaka in \cite{Hironaka77} where  $X\subset V^{(n)}$ and 
${\mathcal G}^{(n)}$ appear in the form of {\em idealistic exponent}. Moreover, his proof shows that $n$ can be chosen as the embedded dimension at the point.
\end{example}

\begin{example}\label{pres_mult}
It follows from Theorem \ref{lem51c} that the multiplicity along point of  variety $X$, say $\text{mult}_X:X \to \mathbb{N}$, is upper semi-continuous, and \ref{blpm} says that it
is  representable via local embedding. 

%

\end{example}

\begin{remark}  \label{lo_q}

1) From the Definition \ref{d_representable} it follows that if $F$ is globally representable at a variety $X$, so that $F_X$ is represented by a pair say $(V^{(n)}, {\mathcal G}^{(n)})$, then a resolution of $(V^{(n)}, {\mathcal G}^{(n)}, E^{(n)}=\{\emptyset\})$ induces a sequence of blowing ups on $X$, 
\begin{equation}
\label{seq_X_bajada}
\xymatrix{
X=X_0  & \ar[l] X_1 & \ldots \ar[l] & X_m 
\mbox{, and  }}
\end{equation} 
$$\text{max F}_X=\text{max F}_{X_0}\dots = \text{max F}_{X_{m-1}} >  \text{max F}_{X_m}.
$$
Here the data are $F$ and  $X$, and ideally, or say, one property in the way of globalization would be that the sequence (\ref{seq_X_bajada})  should not depend on the particular choice of the pair  $(V^{(n)}, {\mathcal G}^{(n)})$.  

2) Let $(V^{(n)}, {\mathcal G}^{(n)})$ be as in Definition \ref{d_representable}. Note that if 
$(V^{(n)}, {\mathcal G}^{(n)})$ and 
$(V^{(n)}, {\mathcal G'}^{(n)})$ are weakly equivalent, then the first can be replaced by the second. 

Over fields of characteristic zero we know how to construct 
a resolution of $(V^{(n)}, {\mathcal G}^{(n)}, E^{(n)}=\{\emptyset\})$. This construction is compatible with equivalence, so that if 
$(V^{(n)}, {\mathcal G}^{(n)}, E^{(n)}=\{\emptyset\})$ and 
$(V^{(n)}, {\mathcal G'}^{(n)}, E^{(n)}=\{\emptyset\})$ are equivalent, both undergo the same constructive resolution.

In general we will consider $F$ to be  representable via local embedding, as is the case for the examples given in \ref{ejemplo_Aroca}  and in \ref{pres_mult}, and we note that basic objects arise only \'etale locally. We aim now to indicate, in the next Theorem \ref{estratificar_X}, that the constructive resolution of basic objects of the form $(V^{(n)}, {\mathcal G}^{(n)}, E^{(n)}=\{\emptyset\})$  already allow us to produce a {\em global procedure}, at least for varieties over fields of characteristic zero. Namely, if $F$ is representable via local embeddings, then for each $X$ of characteristic zero one can {\em construct} a sequence with the property in (\ref{seq_X_bajada}), even if $X$ is not globally embedded.
\end{remark} 
\end{section}

\begin{parrafo}{\em Equivariance} \label{about_equivariance}
A variety is a scheme of finite type over a field $k$, obtained by patching affine schemes of algebras of finite type over $k$, with some additional conditions. We say that a variety induces an abstract scheme simply by neglecting the structure over the field $k$.

Let $F$ be a strongly upper-semi continuous function on varieties. We say that $F$ is {\em equivariant}  if whenever 
$\Theta: X'\longrightarrow X$
is an isomorphism of the underlying abstract schemes, then
$$F_{X'}(\xi)=F_{X'}(\Theta(\xi))$$
for all $\xi \in X'$.  Both, the Hilbert-Samuel function and the  multiplicity  are examples of equivariant strongly upper-semi continuous functions on varieties.  In fact, isomorphic local rings have the same Hilbert-Samuel function and the same multiplicity.

Assume, in addition, that $F$ is locally representable. Fix a point $\xi\in X$ and an isomorphism $\Theta: X' \to X$, and set $\xi'=(\Theta)^{-1}(\xi)$.
Note that given an \'etale neighborhood of $X$ at $\xi$, one can obtain, via $\Theta$, an isomorphic \'etale neighborhood of $X'$ at $\xi'$. 

Suppose now that $F_X$ is represented by a pair $(V^{(n)}, {\mathcal G}^{(n)})$. After replacing $X$ by a suitable \'etale neighborhood of $\xi$ (and  $X'$ by a suitable \'etale neighborhood of $\xi'$ ), we may assume that $X$ is globally represented by $(V^{(n)}, {\mathcal G}^{(n)})$, and $\Theta$ is an isomorphism of abstract schemes. We obtain 
$$X' \to X\subset V^{(n)},$$
in particular an embedding, say $X' \subset V^{(n)}$. 

As $F$ is equivariant we observe that, via this embedding,  $\underline{\text{Max}} \ F_{X'}=\Sing {\mathcal G}^{(n)}$. Moreover, for any $F_X$-local sequence  
$\begin{array}{ccccccc}
X=X_0 &  \leftarrow &  X_1 &  \leftarrow  &  \ldots & \leftarrow  & X_m
\end{array}$
and taking successively fiber products, we get a diagram 
$$\xymatrix{
X=X_0  & \ar[l] X_1 & \ldots \ar[l] &  \ar[l] X_m \\
X'=X'_0 \ar[u]_{\theta=\theta_0} & \ar[l] X'_1 \ar[u]_{\theta_1} & \ldots \ar[l] &  \ar[l] X'_m \ar[u]_{\theta_m}
}$$
and isomorphisms $\Theta_i$, $i=0, \dots m$, so that 
$$\text{max F}_{X'_0}\dots = \text{max F}_{X'_{m-1}} > \text{max F}_{X'_m}
$$
(and 
$\text{max F}_{X_0}\dots = \text{max F}_{X_{m-1}} >  \text{max F}_{X_m}
$). 

Using these arguments one checks that $(V^{(n)}, {\mathcal G}^{(n)})$ represents $F_{X'}$ via the previous embedding $X' \subset V^{(n)}$. 

The equivariance of constructive resolution can be stated as follows (see \cite{Villa92}) .

\color{black}

\end{parrafo}
%
%
%
%

\begin{theorem} \label{estratificar_X}  Let $F$ be a strongly upper-semi continuous function on varieties.  Assume, in addition that  $F$ is  representable via local embedding. Then, given a variety $X$, defined over a perfect field $k$, the following (global) results hold: 
\begin{enumerate}
\item  $\underline{\text{Max}} \text{ F}_X$ can be stratified in smooth strata  in a natural manner. In particular the closed stratum 
 will provide a smooth center $Y(\subset  \underline{\text{Max}} \text{ F}_X)$, and hence a blow up $X=X_0\leftarrow X_1$.
\item If the characteristic is zero, a finite sequence of blow ups 
at smooth centers is constructed 
\begin{equation}
\label{bajada_maximo}
X=X_0 \longleftarrow X_1 \longleftarrow \ldots \longleftarrow X_s,
\end{equation}
by iteration of (1), so that 
\begin{equation}
\label{maximo_baja}
{\text{Max}} \text{ F}_X={\text{Max}} \text{ F}_{X_0}={\text{Max}} \text{ F}_{X_1}=\ldots >{\text{ Max}} \text{ F}_{X_s}.
\end{equation} 
\item (Equivariance) Assume that $F$ is equivariant, and let $\Theta: X'\to X$ be an isomorphism (so for all $\xi\in X'$, $\text{ F}_{X'}(\xi)=\text{ F}_{X}(\Theta(\xi))$).   
Then the smooth stratification of $\underline{\text{Max}}\text{ F}_{X'}$ from  (1) is that induced  by the smooth 
stratification of $\underline{\text{Max}}\text{ F}_X$ via pull back. Moreover,  if the characteristic is zero,  the sequence 
\begin{equation}
\label{b_m_p}
X'=X'_0 \longleftarrow X'_1 \longleftarrow \ldots \longleftarrow X'_{s'}
\end{equation}
with 
\begin{equation}
\label{maximo_baja_bis}
{\text{Max}} \text{ F}_{X'_0}={\text{Max}} \text{ F}_{X'_1}=\ldots >{\text{ Max}} \text{ F}_{X'_{s'}}, 
\end{equation}
 in (2), coincides with that induced by the sequence (\ref{bajada_maximo})  via pull backs with $\Theta: X'\to X$. Thus, in particular, $s'=s$. 
 \end{enumerate}
\end{theorem}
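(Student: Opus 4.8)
\medskip

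The plan is to reduce the three global assertions to the corresponding statements about resolution of basic objects of the form $(V^{(n)},\mathcal{G}^{(n)},E^{(n)}=\{\emptyset\})$, which are known over fields of characteristic zero (cf. \cite{BrV1}, \cite{Villa92}), and then to invoke the hypothesis that $F$ is representable via local embedding (Definition~\ref{d_representable1}) to glue the \'etale-local data. First I would treat (1): fix $X$ over the perfect field $k$ and cover $\underline{\text{Max}}\,F_X$ by \'etale neighborhoods $X_\alpha\to X$ on each of which $F$ is globally representable, so that $\underline{\text{Max}}\,F_{X_\alpha}=\Sing\mathcal{G}_\alpha^{(n_\alpha)}$ for a Rees algebra on a smooth ambient $V_\alpha^{(n_\alpha)}$. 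The constructive resolution algorithm for basic objects assigns, \emph{functorially in smooth morphisms}, an upper-semi-continuous resolution function to $\Sing\mathcal{G}_\alpha^{(n_\alpha)}$ whose level sets are smooth locally closed strata; because $F_{X'}=F_X\cdot\alpha$ for $\alpha$ smooth (the strong upper-semi-continuity axiom) and the representing data are compatible with \'etale base change (as in \ref{about_equivariance}, and using Remark~\ref{rk210}/\ref{rklopr} for well-definedness up to weak equivalence), these local stratifications agree on overlaps and descend to a well-defined smooth stratification of $\underline{\text{Max}}\,F_X$. Taking the closed stratum $Y$ gives the blow-up $X_0\leftarrow X_1$; since $Y$ is permissible for each $\mathcal{G}_\alpha^{(n_\alpha)}$, the transform law of \ref{weaktransforms} and Definition~\ref{d_representable} guarantee that $F$ is again representable on $X_1$, so the construction iterates.

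\medskip

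For (2), assume $\mathrm{char}\,k=0$ and iterate (1). At each stage the closed-stratum blow-up produces, \'etale-locally, a permissible transformation of the representing basic object, and by Definition~\ref{d_representable}(2) we have $\underline{\text{Max}}\,F_{X_i}=\Sing\mathcal{G}_i^{(n)}$ as long as $\max F$ has not dropped. The point is that the \emph{constructive} resolution of $(V^{(n)},\mathcal{G}^{(n)},\emptyset)$ terminates after finitely many steps with $\Sing\mathcal{G}_s^{(n)}=\emptyset$; pulling this back through the local embedding, and using that termination is detected by a resolution invariant that is itself strongly upper-semi-continuous, the induced sequence $X=X_0\leftarrow\cdots\leftarrow X_s$ is finite and satisfies $\max F_{X_0}=\cdots=\max F_{X_{s-1}}>\max F_{X_s}$, which is (\ref{maximo_baja}). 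Finiteness is inherited from the finiteness of the resolution of basic objects; the only subtlety is that $X$ need not be globally embedded, so one must check that the locally constructed sequences are compatible on overlaps — but this is exactly the content of the equivariance/compatibility-with-smooth-morphisms properties of the constructive resolution of basic objects, which hold by \cite{Villa92}.

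\medskip

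For (3), suppose $F$ is equivariant and $\Theta:X'\to X$ is an isomorphism of abstract schemes. The argument of \ref{about_equivariance} shows that, after passing to compatible \'etale neighborhoods, a pair $(V^{(n)},\mathcal{G}^{(n)})$ representing $F_X$ also represents $F_{X'}$ via the induced embedding $X'\hookrightarrow V^{(n)}$; that is, $\Theta$ does not change the representing basic object. Since the constructive resolution of a fixed basic object is a well-defined output, the smooth stratification of $\underline{\text{Max}}\,F_{X'}$ from (1) is literally the $\Theta$-pullback of that of $\underline{\text{Max}}\,F_X$, and likewise the sequence (\ref{b_m_p}) is the pullback of (\ref{bajada_maximo}); in particular $s'=s$. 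The only care needed is to ensure that distinct \'etale charts yield the same global stratification on $X'$, which again follows from compatibility with smooth morphisms together with the fact that the representing Rees algebra is determined up to weak equivalence (Remark~\ref{rklopr}), and weakly equivalent basic objects undergo identical constructive resolution (Remark~\ref{lo_q}(2)).

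\medskip

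The main obstacle, I expect, is not any single inequality but the bookkeeping of \emph{globalization}: $F$ is only representable \'etale-locally, so one must verify that the locally defined stratifications and blow-up sequences patch into genuinely global data on $X$ (and, for (3), on $X'$). This rests on two inputs that must be carefully combined — (i) the compatibility of constructive resolution of basic objects with smooth (in particular \'etale) morphisms, and (ii) the well-definedness of the representing Rees algebra up to weak equivalence together with the invariance of constructive resolution under weak equivalence — and the proof should be organized so that, once a chart is fixed, everything is a citation to the characteristic-zero theory of basic objects, with the perfect-field hypothesis entering only to guarantee the existence of the local embeddings (via \ref{blpm} in the multiplicity case, or the hypothesis of Definition~\ref{d_representable1} in general).
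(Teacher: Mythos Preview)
Your proposal is correct and follows essentially the same strategy as the paper: both reduce the global statements to the constructive resolution of basic objects $(V^{(n)},\mathcal G^{(n)},\emptyset)$, available in characteristic zero, and then argue that the \'etale-local data glue because the resolution functions are well defined up to weak equivalence and compatible with smooth morphisms and isomorphisms. The paper's discussion is slightly more explicit about the mechanism behind the gluing: it singles out the function $\ord^{(n)}:\Sing\mathcal G^{(n)}\to\mathbb Q$ of \ref{orderRees} as the ``main ancestor'' of all the resolution invariants, and formulates two Main Results (independence of the embedding $X\subset V^{(n)}$, proved via Hironaka's trick, and compatibility with abstract isomorphisms $\Theta:X'\to X$) that make precise why your inputs (i) and (ii) hold---but this is a difference of emphasis rather than of approach, and you cite the same references (\cite{BrV1}, \cite{Villa92}, Remarks \ref{lo_q} and \ref{rklopr}, \ref{about_equivariance}) that the paper invokes.
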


\begin{corollary}
\label{primero}
Let $X$ be a non-smooth variety over a perfect field. Then both, the  maximum stratum of the Hilbert-Samuel Function of
$X$, $\underline{\text{Max}} \ \text{HS}_X$,  and the maximum stratum of the multiplicity of $X$, $\underline{\text{Max}} \ \text{mult}_X$,  can be   stratified (in a natural  way). When the characteristic of the base field is zero, the maximum value of any of those  functions 
can be lowered via a finite sequence of blow up at smooth centers. Moreover, the process is constructive and equivariant.  
\end{corollary}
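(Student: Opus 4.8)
The plan is to derive this corollary entirely from Theorem \ref{estratificar_X} by checking that the two functions in question — the Hilbert--Samuel function $\text{HS}$ and the multiplicity $\text{mult}$ — satisfy the hypotheses of that theorem, namely that each is a strongly upper-semi continuous function on varieties that is representable via local embedding, and in addition equivariant. For the multiplicity, strong upper-semi continuity is exactly Theorem \ref{lem51c}: part (1) gives the upper-semi continuity of $\text{mult}_X$, and part (2) gives the monotonicity $\max\text{mult}_X\geq\max\text{mult}_{X_1}$ under blow-ups at smooth equimultiple centers, which is precisely the condition required in the definition of ``strongly upper-semi continuous''; the compatibility with smooth (in particular \'etale) base change is built into the \'etale-local nature of the constructions recalled in \ref{rk215}. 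Representability via local embedding for $\text{mult}$ is Example \ref{pres_mult}, which rests on the local presentation of \ref{blpm}: \'etale locally at any point there is an embedding $X\subset W$ in a smooth scheme and a Rees algebra $\mathcal{G}$ with $\Sing(\mathcal{G})=F_n(X)$, together with the compatibility with sequences of permissible blow-ups demanded in Definition \ref{d_representable}. For the Hilbert--Samuel function the analogous inputs are Bennett's theorem (strong upper-semi continuity) and Aroca's theorem (representability via local embedding), both recalled in Section 8.

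Next I would record that both $\text{HS}$ and $\text{mult}$ are equivariant in the sense of \ref{about_equivariance}: two isomorphic local rings have the same Hilbert--Samuel function and the same multiplicity, so the value of either function at a point depends only on the isomorphism class of $\mathcal{O}_{X,\xi}$, hence is preserved by an isomorphism of the underlying abstract schemes. This allows part (3) of Theorem \ref{estratificar_X} to be invoked as well. With these verifications in place, I would simply apply Theorem \ref{estratificar_X} to $F=\text{HS}$ and to $F=\text{mult}$ in turn: part (1) yields the natural smooth stratification of $\underline{\text{Max}}\,\text{HS}_X$ and of $\underline{\text{Max}}\,\text{mult}_X$; part (2) yields, when $\operatorname{char}k=0$, the finite sequence of blow-ups at smooth centers along which the maximum value stays constant and then strictly drops; and part (3) gives the equivariance of both the stratification and the sequence. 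The qualifier ``constructive'' is justified because the sequence produced in part (2) is obtained by iterating the fixed resolution algorithm for basic objects $(V^{(n)},\mathcal{G}^{(n)},E^{(n)}=\emptyset)$ referred to in \ref{GenRes2}.

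Finally I would observe that the hypothesis that $X$ is non-smooth guarantees the statement is not vacuous: a variety is regular exactly when its highest multiplicity is $1$ (Remark \ref{541}), and since the $\text{HS}$-stratification refines the $\text{mult}$-stratification, for a non-smooth $X$ both $\underline{\text{Max}}\,\text{mult}_X$ and $\underline{\text{Max}}\,\text{HS}_X$ are nonempty proper closed subsets with maximum value strictly above that of a smooth point, so there is genuinely a value to lower. I do not expect any real obstacle here; the corollary is a packaging of results already established, and the only point needing a little care is the routine bookkeeping of matching the axioms of ``strongly upper-semi continuous function representable via local embedding'' with the content of Theorem \ref{lem51c} and \ref{blpm} for the multiplicity, and with Bennett's and Aroca's theorems for $\text{HS}$.
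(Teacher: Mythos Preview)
Your proposal is correct and follows essentially the same approach as the paper: both derive the corollary directly from Theorem \ref{estratificar_X} by observing that $\text{HS}$ and $\text{mult}$ are representable via local embedding (Examples \ref{ejemplo_Aroca} and \ref{pres_mult}) and equivariant. Your write-up is simply more expansive in spelling out the verification of the hypotheses and in adding the remark about non-smoothness, whereas the paper's proof is a two-line citation of those examples followed by an appeal to the theorem.
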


\begin{proof} Both, the Hilbert-Samuel Function and the Multiplicity of $X$ are representable via local embedding 
(see Examples \ref{ejemplo_Aroca}  and \ref{pres_mult}), and equivariant. So the claim follow from Theorem \ref{estratificar_X}.
\end{proof} 
\begin{parrafo}{\em On the proof of Theorem \ref{estratificar_X}}
%

To fix ideas let us consider the case $F=\text{ HS}$, the Hilbert Samuel function. As it has been indicated above, if  $\Theta: X'\to X$ is an isomorphism of scheme, then for all $\xi \in X'$, 
$\text{ HS}_{X'}(\xi)=\text{ HS}_X(\Theta(\xi))$. The claim, in this case, is that the procedure of reduction of $\max HS_X$  can be done in a way that also inherits the property of compatibility with isomorphisms: More precisely, a refinement of the function $F_X$ will lead to a new stratification into locally closed sets, and the task is to obtain such refinement so that:
\begin{itemize}
\item The strata of this new stratification are regular.
\item By successive blow ups at the closed and smooth center produced by this new stratification one achieves the reduction in (\ref{maximo_baja}).
\item This refined function should inherit the nice natural properties of the original function $F_X$, such as the compatibility with arbitrary isomorphisms of schemes.
\end{itemize}
\color{blue}

\color{black}
In order to achieve these results over fields of characteristic zero our aim is to use properties of {\em constructive resolution of basic objects}. Note here that basic objects are obtained only locally (see \ref{d_representable1}). In addition we want to guarantee the condition of equivariance in (3). We outline here the overall strategy, and we refer to 
\cite{Cut2}, or \cite{BrV1}, among other introductory presentations, for full details.
Assume first that the variety $X$ is (globally) embedded in a smooth scheme $V^{(n)}$ of dimension $n$
$$X\subset V^{(n)}$$ 
and that there is  an ${\mathcal O}_{V^{(n)}}$-Rees algebra ${\mathcal G}^{(n)}$ in the conditions of Definition \ref{d_representable}.
Recall that a resolution
\begin{equation}\label{svlap}
\begin{array}{ccccccc} 
V^{(n)}_0=V^{(n)} & \leftarrow & V^{(n)}_1 & \leftarrow & \ldots & \leftarrow V^{(n)}_m\\
{\mathcal G}_0^{(n)}= {\mathcal G}^{(n)} & & {\mathcal G}_1^{(n)} & &\ldots  & {\mathcal G}_m^{(n)}
\end{array}
\end{equation} 
induces a sequence  of blow ups over $X$, say
$$\begin{array}{ccccccc}
X=X_0 &  \leftarrow &  X_1 &  \leftarrow  &  \ldots & \leftarrow  & X_m, \mbox{ and }
\end{array}$$
$$\text{max F}_X=\text{max F}_{X_0}\dots = \text{max F}_{X_{m-1}} > \text{max F}_{X_m} $$

One can {\em construct} the resolution (\ref{svlap}). More precisely, given any pair $(V^{(n)}\mathcal G^{(n)})$, constructive resolution of basic objects provides a suitable upper-semi-continuous function on $\Sing(\mathcal G^{(n)})$.
The points where this function takes the biggest value is a smooth center in $\Sing(\mathcal G^{(n)})$, and 
(\ref{svlap}) is constructed by blowing up, successively, along such centers.

We will not discuss here how these upper-semi-continuos functions are defined, but we will give some evidence on why 
this procedure of resolution of pairs will lead us to the proof of the theorem.

To this end let us indicate that these upper semi-continuos functions have a main ancestor, namely the function
\begin{equation}\label{nge1}\ord^{(n)}: \mbox{Sing }{\mathcal G^{(n)}}\to \mathbb{Q}
\end{equation} 
in \ref{orderRees}. One readily checks that it  is upper semi-continuous and takes only finitely many values. On the other hand $\mbox{Sing }{\mathcal G_0^{(n)}}=\Max F_X$, so (\ref{nge1}) induces
\begin{equation}\label{nge2}\Max F_X\to \mathbb{Q}
\end{equation}
%
%

{\em Main Result I)} Assume that the same variety $X$ can be embedded in a smooth scheme $V'^{(n)}$, of dimension $n$ as before, and there is  an ${\mathcal O}_{V'^{(n)}}$-Rees algebra ${\mathcal G'}^{(n)}$ in the conditions of Definition \ref{d_representable}, then the induced function $\Max F_X\to \mathbb{Q}$ coincides with that in (\ref{nge2}). In other words, the same closed set $\Max F_X$ can be identified with $\Sing({\mathcal G}^{(n)}) (\subset V^{(n)})$ and with $\Sing({\mathcal G'}^{(n)}) (\subset V'^{(n)})$. The result says that a function, say
\begin{equation}\label{nge3}ord_X^{(n)}:\Max F_X\to \mathbb{Q}
\end{equation}
is well defined.

This first Main Result is due to Hironaka. The proof he gives is known as Hironaka's trick (see \cite{Cut2}, Proposition 6.27) , or \cite{EncVil97:Tirol}). Moreover, this proof goes one step
beyond:  As was indicated in the discussion in \ref{about_equivariance}, if 
$\Theta: X' \to X$ is an isomorphism of schemes and if $X\subset V^{(n)}$ and ${\mathcal G}^{(n)}$ are in the conditions of Definition \ref{d_representable}, then the isomorphism induce an inclusion $X' \subset V^{(n)}$, and in addition, and ${\mathcal G}^{(n)}$ is also in the conditions of Definition \ref{d_representable} for $X'$. 

{\em Main Result II)} Let $\Theta: X' \to X$ be an isomorphism of schemes, then this isomorphism maps $\Max F_{X'}$ to 
$\Max F_X$, and the functions 
$\ord_Y^{(n)}:\Max F_Y\to \mathbb{Q}$ and 
$\ord_X^{(n)}:\Max F_X\to \mathbb{Q}$, both obtained as in (\ref{nge2}), are compatible with the isomorphism. Namely, 
$$\text{ord}^{(n)}_{X'}(\xi)=\text{ord}^{(n)}_{X}(\Theta(\xi))$$
for all $\xi\in X'$.

As was previously mentioned, the functions defining the constructive resolution of $(V^{(n)}, {\mathcal G}^{(n)}, E^{(n)}=\{\emptyset\})$, used here in (\ref{svlap}), evolve from the functions in  (\ref{nge1}), at least 
for the procedure in \cite{Villa89} and \cite{Villa92}, where constructive resolution uses
$F_X=HS_X$ in \ref{ejemplo_Aroca}.


There are also alternative proofs of these Main Results which do not make use of Hironaka's trick. Recall from 
Remark \ref{lo_q}, 2), that if we fix an embedding of $X$, say 
$X \subset V^{(n)}$. the role of $(V^{(n)}, {\mathcal G}^{(n)}, E^{(n)}=\{\emptyset\})$ is well defined up to weak equivalence (see Def \ref{Defweakx}).  
In \cite{BrV1} it is proved firstly that the function in (\ref{nge1}) is well defined up to weak equivalence, a result that we have stated here in Theorem \ref{Th416}. Then, the notion of equivalence 
is extended so that if $X\subset V^{(n)}$ , ${\mathcal G}^{(n)}$, and $X\subset V^{(m)}$, ${\mathcal G'}^{(m)}$ are both in the conditions of Definition \ref{d_representable}, the pairs 
$(V^{(n)}, {\mathcal G}^{(n)})$, $(V^{(m)}, {\mathcal G'}^{(m)})$, are equivalent. Finally it is proved that in the preceding conditions both induce the same function (\ref{nge2}), and furthermore, the 
constructive resolution of both pairs induce the same sequence of monoidal transformations over $X$, say
$$X=X_0 \longleftarrow X_1 \longleftarrow \ldots \longleftarrow X_r.$$
This shows that the constructive resolution of basic objects is defined so as to ensure the conditions of globalization. Compatibility with isomorphisms follows from the main Result II).

%
%
%
%
%
%
 
 This is the accomplishment of constructive resolution of basic objects, and leads to:

\begin{theorem} 
\label{resolucion_alg_varieties_1}
Let $X$ be a non-smooth  variety  over a field of characteristic zero. Then  a  finite sequence 
of blow ups at smooth equimultiple centers can be constructed: 
\begin{equation}
\label{resolver_X_mas}
X=X_0 \longleftarrow X_1 \longleftarrow \ldots \longleftarrow X_r
\end{equation}
so that:
\begin{enumerate}
\item  $X_r$ is smooth;
\item The composition $X\leftarrow X_r$ induces an isomorphism on $X\setminus \Sing X$;
\item  The   exceptional divisor of $X\leftarrow X_r$ has normal crossing support.  
\end{enumerate}
Moreover, the process is constructive and equivariant. 
\end{theorem}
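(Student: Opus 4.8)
The plan is to invoke Theorem \ref{estratificar_X} with the function $F=\mult$. By Theorem \ref{lem51c} the multiplicity is upper-semi continuous and does not increase under blow up at a smooth equimultiple center, by \ref{blpm} (see Example \ref{pres_mult}) it is representable via local embedding, and it is equivariant, since isomorphic local rings have equal multiplicity. Hence Theorem \ref{estratificar_X} yields, for a non-smooth $X$ over the characteristic-zero field $k$, a constructively defined finite sequence of blow ups at smooth centers
\[
X = X_0 \longleftarrow X_1 \longleftarrow \cdots \longleftarrow X_s, \qquad
\max\mult_{X_0} = \cdots = \max\mult_{X_{s-1}} > \max\mult_{X_s},
\]
each center lying in $\underline{\text{Max}}\,\mult_{X_i}$ and therefore being equimultiple. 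Since $\max\mult$ is a positive integer that strictly decreases after each such block, iterating the construction terminates at a variety $X_r$ with $\max\mult_{X_r}=1$; as $X_r$ is a reduced equidimensional scheme of finite type over the perfect field $k$, multiplicity one at every point forces $X_r$ to be regular (the implication recorded in \ref{par0}), so $X_r$ is smooth, which establishes (1).

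For (2), each center blown up along the way lies in the locus of points whose multiplicity equals the current maximum, which is $\geq 2$ until $X_r$ is reached; since the points of multiplicity one are precisely the smooth points and the multiplicity never increases along the sequence (Theorem \ref{lem51c}(2)), no center meets the preimage of $X\setminus\Sing X$. Hence every blow up, and a fortiori the composite $X\leftarrow X_r$, restricts to an isomorphism over $X\setminus\Sing X$. For (3), one keeps track of the exceptional hypersurfaces produced so far as the set $E$ of the associated basic objects $(V^{(n)},\mathcal G^{(n)},E^{(n)})$: the constructive resolution of a basic object over a characteristic-zero field only blows up centers having normal crossings with $E^{(n)}$ and adjoins each new exceptional hypersurface to $E^{(n)}$, so the total exceptional divisor of $X\leftarrow X_r$ automatically has normal crossing support. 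Equivalently, once $X_r$ is smooth one may append a final embedded resolution of the accumulated exceptional divisor inside $X_r$ — itself a resolution of a basic object — to arrange normal crossings without disturbing the smoothness of $X_r$ nor the isomorphism over $X\setminus\Sing X$.

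Constructivity and equivariance are then inherited from Theorem \ref{estratificar_X}(2)--(3): the whole sequence is produced algorithmically from $X$ alone, and for an isomorphism $\Theta\colon X'\to X$ of abstract schemes the equivariance of $\mult$ makes the sequence built for $X'$ the pull back via $\Theta$ of that built for $X$, so in particular $r'=r$. The step I expect to be the main obstacle is the normal-crossings bookkeeping across iterations: one must verify that, when passing from $X_i$ to the next round, the re-embedding and the accumulated exceptional divisor are correctly recorded as the data $(V^{(n)},\mathcal G^{(n)},E^{(n)})$ of the new basic object, and that the resolution algorithm for basic objects keeps all centers permissible with respect to $E^{(n)}$ throughout. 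Everything else is an assembly of results already available here — Theorem \ref{lem51c}, \ref{blpm}, Theorem \ref{estratificar_X}, and the known construction and properties of resolution of basic objects in characteristic zero.
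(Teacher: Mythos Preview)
Your overall strategy coincides with the paper's: invoke Theorem~\ref{estratificar_X} for $F=\mult$, iterate until the maximum multiplicity is $1$, and then deal separately with the exceptional divisor. Parts (1) and (2) of your argument match the paper essentially verbatim.

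The difference is in how (3) is secured. Your first approach, tracking $E^{(n)}$ through the basic objects $(V^{(n)},\mathcal G^{(n)},E^{(n)})$, is not supported by the framework here: each time $\max\mult$ drops and Theorem~\ref{estratificar_X} is invoked anew, the local presentation is rebuilt from scratch with $E^{(n)}=\emptyset$, and in any case the $E^{(n)}$ live on the ambient $V^{(n)}$, not on $X_i$. So the ``bookkeeping across iterations'' you flag as the main obstacle is not just an obstacle --- in this setup it does not go through.

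Your second approach (``append a final embedded resolution of the accumulated exceptional divisor inside $X_r$'') is exactly what the paper does, but the paper carries it out concretely. After reaching a smooth $X_s$ one sets $\mathcal K=\mathcal I(Y_0)\cdots\mathcal I(Y_{s-1})\,\mathcal O_{X_s}$, a locally invertible ideal supported precisely on the preimage of $\Sing X$. Taking $\mathcal G_s$ to be the Rees ring of $\mathcal K$ and $F_s=\emptyset$, the constructive resolution of the basic object $(X_s,\mathcal G_s,F_s)$ gives a further sequence $X_s\leftarrow\cdots\leftarrow X_r$ of blow-ups at smooth centers (automatically equimultiple, since $X_s$ is already smooth) with $\Sing(\mathcal G_r)=\emptyset$; by Remark~\ref{541}(1) this is a log-principalization of $\mathcal K$, so the exceptional divisor of $X\leftarrow X_r$ is supported on $F_r$ and hence has normal crossings. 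The centers lie in $\Sing(\mathcal G_i)$, which is contained in the support of $\mathcal K$, so (2) is preserved. Equivariance of this second phase follows because an isomorphism $\Theta$ identifies the ideals $\mathcal K$ and $\mathcal K'$ and hence the two basic objects. Replacing your sketch of (3) by this explicit step removes the gap you yourself identified.
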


\begin{proof} Use   Theorem  \ref{estratificar_X} to construct a resolution of   singularities  of $X$, 
\begin{equation}
\label{short_resolution}
X=X_0 \longleftarrow X_1 \longleftarrow \ldots \longleftarrow X_s. 
\end{equation}
namely a sequence so that $\max \text{Mult}_{X_s}=1$. Thus $X\stackrel{\pi}{\longleftarrow} X_s$ satisfies conditions (1) and (2). 

Since each step  $X_i\longleftarrow X_{i+1}$   in the sequence (\ref{short_resolution}) is the blow at a smooth center $Y_i\subset \Sing X_i$, we can attach to it a well defined invertible sheaf ${\mathcal I}(Y_i){\mathcal O}_{X_{i+1}}$.  
Now define  ${\mathcal K}$ as $ {\mathcal I}(Y_1)\cdots  {\mathcal I}(Y_{s-1}){\mathcal O}_{X_s}$. Observe that 
 ${\mathcal K}$ is a locally invertible ideal supported on  $\pi^{-1}(\Sing X)\subset X_r$. 

Set ${\mathcal G}_s$ as the Rees ring of the ideal ${\mathcal K}$, and $F_s:=\emptyset$. Consider the constructive  resolution of the  the basic object 
$(X_s, {\mathcal G}_s, F_s),$
say   
\begin{equation}
\label{segunda_resolucion}
(X_s,{\mathcal G}_s, F_s)\longleftarrow (X_{s+1} ,{\mathcal G}_{s+1}, F_{s+1}) \longleftarrow \ldots 
\longleftarrow (X_r,{\mathcal G}_r, F_r).
\end{equation}
Notice that:
\begin{enumerate}
\item Since sequence (\ref{segunda_resolucion}) is a composition of blow ups at smooth centers, $X_r$ is smooth. 
\item Since for $i=s,\ldots, r-1$ the center of each blow up 
$$(X_i,{\mathcal G}_i, F_i)\longleftarrow (X_{i+1} ,{\mathcal G}_{i+1}, F_{i+1})$$
is contained in $\Sing ({\mathcal G}_i)$,  
$X\longleftarrow X_s\longleftarrow X_r$ 
induces an isomorphism  on $X\setminus \Sing X$;
\item Since  $\Sing ({\mathcal G}_r)=\emptyset$, the total transform of ${\mathcal K}$ in $X_r$ is supported on 
$F_r$. Thus the exceptional divisor of $X\longleftarrow X_r$ has normal crossings support (see \ref{541}). 
\end{enumerate} 
Finally, to see that the process is equivariant, observe that if $\Theta: X'\to X$ is an isomorphism, then the equivariant resolution of $X$  given, as before,  induces the  resolution of $X'$, 
$$\xymatrix{
X=X_0  & \ar[l] X_1 & \ldots \ar[l] & X_s  \ar[l] \\
X'=X'_0 \ar[u]_{\theta=\theta_0} & \ar[l] X'_1 \ar[u]_{\theta_1} & \ldots \ar[l] &  \ar[l]  X'_s \ar[u]_{\theta_s}
}$$
The latter is therefore obtained by blowing up smooth centers $Y'_i\subset \Sing X'_i$ with $\theta_i(Y_i')=Y_i$. Thus, if ${\mathcal K}'={\mathcal I}(Y'_1)\cdots  {\mathcal I}(Y'_{s-1}){\mathcal O}_{X'_s}$, then $\theta_s$ induces an isomorphism between ${\mathcal K}$ and ${\mathcal K}'$. Therefore, the basic objects,  $(X_s,{\mathcal G}_s, F_s=\emptyset)$ and $(X'_s,{\mathcal G'}_s, F'_s=\emptyset)$ defined by the corresponding Rees rings are identifiable (see \cite{BrV1}), and the isomorphism identifies the constructive resolution of both.
\end{proof} 
\begin{remark} We have formulated resolution of singularities for 
a variety over a field $k$ of characteristic zero. The Definition \ref{d_representable1}, and Theorem \ref{estratificar_X}, still apply for schemes that are simply equidimensional. In this case the algorithm of resolution of basic objects produces a sequence of blow ups at equimultiple centers
\begin{equation}
\label{resolver_X_mas1}
X=X_0 \longleftarrow X_1 \longleftarrow \ldots \longleftarrow X_r
\end{equation}
so that the multiplicity is constant along each irreducible component of $X_r$. Moreover, if each $X_i$ is replaced by $(X_i)_{red}$ we get the resolution of $X_{red}$ assigned by Theorem \ref{resolucion_alg_varieties_1}.


\end{remark}

\end{parrafo}

\end{document}